\newcommand{\myfig}[1]{\includegraphics{figures/#1}}
\newtheoremstyle{thm}{3pt}{3pt}{\itshape}{}{\bfseries}{}{.5em}{}
\newtheoremstyle{thmsub}{3pt}{3pt}{\upshape}{}{\bfseries}{}{.5em}{}
\theoremstyle{thm}
\newtheorem{theorem}{Theorem}[section]
\newtheorem{lemma}[theorem]{Lemma}
\newtheorem{proposition}[theorem]{Proposition}
\newtheorem{defn}[theorem]{Definition}
\newtheorem{corollary}[theorem]{Corollary}
\newtheorem{thm}{Theorem}
\newtheorem{cor}[thm]{Corollary}
\newcommand{\noproof}{%
    \ifmmode
    \pushQED{\qed}\qedhere
    \else
    {\hspace*{\fill}\qed}%
    \fi}
\numberwithin{equation}{section}
\newtheorem*{qn}{Open Question:}
 \newcommand{\abs}[1]{%
    {\if@display
       \left\lvert #1 \right\rvert
     \else
       \lvert #1 \rvert
     \fi}}
\newcommand{\norm}[1][\cdot]{\left\lVert #1 \right\rVert}
  \newcommand{\BreakableEnDash}{\leavevmode%
    \prw@zbreak--\discretionary{}{}{}\prw@zbreak}
  \newcommand{\BreakableEmDash}{\leavevmode%
    \prw@zbreak---\discretionary{}{}{}\prw@zbreak}
  \DeclareRobustCommand{\enhyp}{%
    \ifmmode--\else\BreakableEnDash\fi}
  \DeclareRobustCommand{\emhyp}{%
    \ifmmode---\else\BreakableEmDash\fi}
\newcommand{\m}[1]{\ensuremath {\mathcal{#1}}\xspace}
\newcommand{\N}{\ensuremath {\mathbb{N}}\xspace}
\newcommand{\Q}{\ensuremath {\mathbb{Q}}\xspace}
\newcommand{\R}{\ensuremath {\mathbb{R}}\xspace}
\newcommand{\Z}{\ensuremath {\mathbb{Z}}\xspace}
\newcommand{\psb}{\text{psb}}
\newcommand{\ps}{\text{ps}}
\newcommand{\ci}{c^\infty}
\newcommand{\Ci}{\ensuremath {C^\infty}\xspace}
\newcommand{\restrict}{\!\!\mid}
\newcommand{\ssetminus}{\ensuremath {\!\smallsetminus\!}}
\DeclareMathOperator{\map}{Map}
\DeclareMathOperator{\smth}{Diff}
\DeclareMathOperator{\sign}{sign}
\DeclareMathOperator{\supp}{supp}
\begin{document}

\title{%
 The Smooth Structure of the Space of Piecewise\hyp{}Smooth Loops%
}

\date{\today}

\author{Andrew Stacey}

\maketitle

\begin{abstract}%
 We consider the problem of defining the structure of a smooth manifold on the various spaces of piecewise\hyp{}smooth loops in a smooth finite dimensional manifold.
 We succeed for a particular type of piecewise\hyp{}smooth loops.

 We also examine the action of the diffeomorphism group of the circle.
 It is not a useful action on the manifold that we define.
 We consider how one might fix this problem and conclude that it can only be done by completing to the space of loops of bounded variation.
\end{abstract}


\section{Introduction}
\label{sec:intro}

It is often convenient to regard a space of certain loops in a smooth manifold as a smooth manifold itself with the aim of doing differential topology thereon.
Depending on the application this approach can vary from the conceptual to the rigorous.
The two most popular types of loop are continuous and smooth, for both of which there is a rigorous theory of infinite dimensional manifolds making these into smooth manifolds: \cite{pm3,jm,sl,wk,ho}.
Other types of loop have also been considered: it is often convenient to use a manifold modelled on a Hilbert space when one usually uses the space of loops with square\hyp{}integrable first derivative.
Our subject of study is piecewise\hyp{}smooth loops.

These loops are used as a compromise between continuous and smooth loops, having some of the advantages of each over the other.
Like continuous loops, piecewise\hyp{}smooth loops can be pasted together with minimal reparametrisation (none if Moore loops are used).
Like smooth loops, one can parallel transport along piecewise\hyp{}smooth loops.
Also the theory of loop groups, \cite{apgs}, applies to piecewise\hyp{}smooth loops but not to continuous loops.

However when attempting to build a smooth manifold of piecewise\hyp{}smooth loops one encounters the problem that outside the realm of Banach spaces the concept of ``smooth'' becomes increasingly hard to pin down.
If one defines ``smooth'' as ``infinitely differentiable'' then there are many ways to interpret this.
Fortunately a general theory has been developed that is both conceptually simple and straightforward to apply.
This theory has been laid out in the weighty tome \cite{akpm} and it is this calculus that we use for the work in this paper.
The introduction of \cite{akpm} and the historical remarks at the end of the first chapter are an interesting read on the development of calculus in infinite dimensions.

We should clarify at the outset that piecewise\hyp{}smooth loops are often used in a slightly different context in the theory of loop spaces.
It is sometimes the case that a more complicated, or less intuitive, class of loops is required inside which piecewise\hyp{}smooth loops sit as a dense subspace.
One then starts with piecewise\hyp{}smooth loops and completes to the desired space.
Obviously in this case the topology is dictated by the desired completion and not by the space of piecewise\hyp{}smooth loops themselves.
This article has little to say in this context, barring that theorem~\ref{th:open} below can be interpreted as saying that even in this context one should be very precise as to what one means by ``piecewise\hyp{}smooth''.

There is a standard method for making a space of loops in a smooth manifold into a smooth manifold itself.
This is well\hyp{}known for specific examples; the simplest being smooth loops as in \cite[ch IX]{akpm} or \cite{pm3}.
In \cite{math.DG/0612096} we generalised this to an arbitrary class of loops satisfying a short list of conditions.
We shall review these conditions in section~\ref{sec:prelim}.
This reduces the problem of constructing a smooth structure on a space of loops to checking these conditions, all of which only involve loops in \R or \(\R^n\).
The first part of this paper is, therefore, devoted to checking these conditions for the spaces of piecewise\hyp{}smooth loops.

\medskip

The word ``spaces'' in the above is not a misprint.
We say ``spaces'' because we intend to consider two types of piecewise\hyp{}smooth loop: piecewise\hyp{}smooth and piecewise\hyp{}smooth with bounded derivatives; that is, each derivative is a bounded function on its domain of definition.
There is a natural topology on each of these spaces of loops in \R which makes each into a locally convex topological vector space.

\medskip

We begin our analysis with the space of (all) piecewise\hyp{}smooth loops in \R. Our first theorem is perhaps somewhat surprising.

\begin{thm}
 \label{th:open}
 The space of piecewise\hyp{}smooth loops in Euclidean space is a dense topological subspace of the space of continuous loops.
\end{thm}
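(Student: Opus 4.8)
The statement splits into two independent claims: that the piecewise-smooth loops form a \emph{dense} subset of the continuous loops, and that the natural topology they carry coincides with the subspace topology inherited from the uniform topology on continuous loops. The plan is to treat these separately; density is routine, and the coincidence of topologies carries all the content. For density I would note that every smooth loop is piecewise-smooth and appeal to uniform density of smooth loops among continuous ones; even more directly, given a continuous loop $\gamma$ and $\varepsilon > 0$, uniform continuity of $\gamma$ supplies a finite partition of the circle on each piece of which $\gamma$ varies by less than $\varepsilon$, and the loop interpolating linearly between the values of $\gamma$ at the partition points is then piecewise-smooth and within $\varepsilon$ of $\gamma$ in the supremum norm. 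This occupies a few lines.

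For the coincidence of topologies, one inclusion is immediate: the supremum seminorm is among the seminorms defining the natural topology, so the inclusion into the continuous loops is continuous and the natural topology is at least as fine as the uniform one. The content is the reverse inclusion --- that the natural topology is \emph{no finer} than the uniform one, so that topologically the piecewise-smooth structure is invisible. Since the space of piecewise-smooth loops is a topological vector space, translations are homeomorphisms for both topologies, and it is therefore enough to compare the two at the zero loop: I would show that every neighbourhood of $0$ in the natural topology contains a set of the form $\{\gamma : \|\gamma\|_\infty < \varepsilon\}$. Arguing by contradiction, fix a natural-open $U \ni 0$ admitting no such set and choose piecewise-smooth loops $\gamma_n$ with $\|\gamma_n\|_\infty < 2^{-n}$ and $\gamma_n \notin U$.

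Turning this into a contradiction is the genuinely hard step, and the one to watch; here the latitude defining \emph{this} class of loops must be used. Near a breakpoint the derivatives of a piecewise-smooth loop are wholly unconstrained, so a loop that is small in the supremum norm but large in whatever seminorm keeps it outside $U$ can be reparametrised and relocated into an arbitrarily short arc without leaving the class. The plan is to spread suitably rescaled copies of the $\gamma_n$ over short, pairwise disjoint arcs, joined by smooth interpolating segments --- harmless because $\|\gamma_n\|_\infty \to 0$ --- so as to produce loops that the openness of $U$ at $0$ is nonetheless obliged to contain, forcing infinitely many $\gamma_n$ into $U$ against their choice. Two points need care. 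First, the superposed object must be a bona fide piecewise-smooth loop; this is exactly the place where the analogous argument for piecewise-smooth loops with \emph{bounded} derivatives breaks down, and must break down, since that space does carry a strictly finer topology. Second, one must follow how the natural topology distributes the offending seminorm across the superposed copies as the arcs shrink, so that the contradiction with $\gamma_n \notin U$ genuinely bites. With those resolved, what remains is routine estimation with the defining seminorms.
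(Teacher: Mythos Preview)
Your treatment of density and of the easy continuity direction is fine. The gap is in your reverse direction: the contradiction-by-superposition scheme has no mechanism that actually closes. You choose $\gamma_n\notin U$ with $\|\gamma_n\|_\infty<2^{-n}$, propose to relocate rescaled copies onto short disjoint arcs, and assert that the resulting object forces ``infinitely many $\gamma_n$ into $U$''. But relocating $\gamma_n$ produces a \emph{different} loop; membership of that loop in $U$ says nothing about $\gamma_n$ itself. Nor does membership of a finite superposition $\Gamma_N$ in $U$ imply anything about its summands, and an infinite superposition has infinitely many breaks and leaves the class altogether. You flag both of these as ``points needing care'', but they are not technicalities---they are the whole argument, and as written there is no route from ``$U$ contains something built out of copies of the $\gamma_n$'' to ``$U$ contains some $\gamma_n$''.

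What is missing is precisely the structure of the locally convex inductive limit that defines the topology on $L_{\ps}\R$: a basic $0$-neighbourhood $U$ is the \emph{convex hull} of a union $\bigcup_{F}U(C_F,n_F,\epsilon_F)$, one piece for every finite break set $F$, where $C_F\subseteq S^1\setminus F$ is compact. The paper exploits this directly rather than by contradiction. Fix any $\gamma$ with $\|\gamma\|_\infty$ small. For each $t\in S^1$ the set $F_t=\{t\}$ contributes a piece $U(C_{F_t},n_{F_t},\epsilon_{F_t})$ to $U$ whose compact set $C_{F_t}$ avoids $t$; hence the complements $S^1\setminus C_{F_t}$ cover $S^1$. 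Take a finite smooth partition of unity $\{\tau_j\}$ subordinate to this cover and write $\gamma=\sum_j \tfrac{1}{n}(n\tau_j\gamma)$. Each $n\tau_j\gamma$ vanishes on a neighbourhood of the relevant $C_{F_{t_j}}$, so the derivative conditions there are vacuous and only $\|n\tau_j\gamma\|_\infty\le n\|\gamma\|_\infty<\epsilon_{F_{t_j}}$ is required. Convexity of $U$ then gives $\gamma\in U$. The two ingredients you need---convexity of the $0$-neighbourhood and a partition of unity to decompose $\gamma$ into pieces each living where only the sup norm is tested---are exactly what your outline lacks.
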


This means that although the derivatives were used in selecting the loops, when putting a topology on the resulting space this information is thrown away.
It is essentially a consequence of the fact that there are no conditions near breaks; which means that derivatives are ignored near a break.
However, this means that one cannot even test for a break and so effectively one has to ignore all derivatives of all loops.
This introduces all sorts of problems.

This is bad news for building a smooth manifold.
Since not all continuous loops are piecewise\hyp{}smooth, an immediate corollary of this is that the space of piecewise\hyp{}smooth loops is not complete.
One of the foremost conditions that the model space of a smooth (infinite dimensional) manifold must satisfy is a weak form of completeness; referred to as \emph{\(\ci\)\enhyp{}completeness} in \cite{akpm} and more commonly known as \emph{local completeness} in functional analysis, see for example \cite[ch 10]{hj}.
It is generally weaker than sequential completeness but the two are the same for normed vector spaces.
Therefore the space of piecewise\hyp{}smooth loops is not \(\ci\)\enhyp{}complete.
We shall explain later why this is problematic.

\begin{cor}
 The space of piecewise\hyp{}smooth loops in a smooth manifold does not form a smooth manifold in the sense of \cite{akpm}.
\end{cor}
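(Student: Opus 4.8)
The plan is to read the corollary off from Theorem~\ref{th:open} together with the requirement, part of the very definition of a smooth manifold in \cite{akpm}, that the modelling spaces be convenient vector spaces — in particular \(\ci\)\enhyp{}complete. Write \(P\) for the space of all piecewise\hyp{}smooth loops in \(\R^n\) with its natural topology. The crux is that \(P\) is not \(\ci\)\enhyp{}complete. Indeed, by Theorem~\ref{th:open} the space \(P\) is a dense linear subspace of the Banach space \(C(S^1,\R^n)\) of continuous loops, and it is a \emph{proper} subspace, since not every continuous loop is piecewise\hyp{}smooth (for instance a nowhere-differentiable one, or one whose corners accumulate at a point). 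A proper dense linear subspace of a Banach space is never complete — it would otherwise be closed, hence all of \(C(S^1,\R^n)\) — so \(P\) is a non-complete normed space, and for normed spaces \(\ci\)\enhyp{}completeness coincides with ordinary completeness. Equivalently, \(P\) is dense but not \(\ci\)\enhyp{}closed in the convenient space \(C(S^1,\R^n)\), so \(P\) is not a convenient vector space.

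Next I would reduce the statement about a general \(n\)-manifold \(M\) to this computation. Suppose the space \(LM\) of piecewise\hyp{}smooth loops in \(M\) carried a smooth structure in the sense of \cite{akpm}. Fix a chart \(\psi\colon U\to\R^n\) of \(M\) with \(\psi\) surjective. Having image inside \(U\) is an open condition in the \(C^0\)-topology; and by Theorem~\ref{th:open}, applied in charts of \(M\), the topology on \(LM\) is exactly that \(C^0\)-topology. So the loops with image in \(U\) form an open subset of \(LM\), hence an open submanifold, which post-composition with \(\psi\) identifies homeomorphically with \(P\); the smooth structure is transported along this homeomorphism, making \(P\) a smooth manifold in the sense of \cite{akpm}. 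It therefore suffices to rule that out. This also pinpoints the failure of the construction of \cite{math.DG/0612096}: were it to apply it would model \(LM\) on exactly this space \(P\), the relevant bundles over \(S^1\) being trivial.

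Finally I would invoke that the modelling space of every chart of a manifold in the sense of \cite{akpm} is convenient, hence \(\ci\)\enhyp{}complete. Since \(P\) is metrizable its topology agrees with its \(\ci\)\enhyp{}topology, so a chart of \(P\) around the constant loop would present a neighbourhood of \(0\) in \(P\) as \(\ci\)\enhyp{}diffeomorphic to an open subset of a convenient space; taking the bornology of \(P\) to be that inherited from the Banach space \(C(S^1,\R^n)\), this forces \(P\) itself to be \(\ci\)\enhyp{}complete, contradicting the first paragraph. I expect the bulk of this to be routine given Theorem~\ref{th:open}; the single step needing genuine care is this last one — upgrading ``the natural modelling space is not convenient'' to ``no smooth atlas of any kind exists'' — where one must juggle the topology on the loop space (fixed by Theorem~\ref{th:open}), the \(\ci\)\enhyp{}topology, and the bornological meaning of \(\ci\)\enhyp{}completeness.
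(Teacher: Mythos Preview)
Your first paragraph is precisely the paper's argument: Theorem~\ref{th:open} makes \(L_{\ps}\R^n\) a proper dense subspace of the Banach space \(L^0\R^n\), hence a non\hyp{}complete normed space, hence not \(\ci\)\enhyp{}complete, hence not convenient; and the model space of a manifold in the sense of \cite{akpm} must be convenient. The paper stops there. It reads the corollary as ``the standard construction (the one governed by the conditions in section~\ref{sec:prelim}) fails'', the failure being exactly condition~\ref{cond:cmplt}; it does not attempt to exclude \emph{every} conceivable smooth atlas on the underlying set.

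Your second and third paragraphs therefore aim at something stronger than what the paper proves. The reduction to \(\R^n\) via a chart is reasonable, but the final step\emhyp{}deducing that \emph{any} smooth atlas on \(P\) would force \(P\) itself to be \(\ci\)\enhyp{}complete\emhyp{}is, as you suspect, the genuine gap. A Kriegl\enhyp{}Michor chart identifies a \(\ci\)\enhyp{}open piece of \(P\) with a \(\ci\)\enhyp{}open subset of some convenient space \(E\), but without an a~priori link between the bornology of \(E\) and that of \(P\) (or between the chart and the linear structure of \(P\)) it is not clear why this transfers \(\ci\)\enhyp{}completeness back to \(P\). One would need something like ``a locally convex space \(\ci\)\enhyp{}locally diffeomorphic to a convenient space is convenient'', and that is not obvious; the chart need not be linear, nor respect the inclusion into \(L^0\R^n\). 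So: your core argument matches the paper, and the extra you attempt is a strictly stronger statement whose last step is not justified as written.
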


\medskip

When we turn to piecewise\hyp{}smooth and bounded loops then these problems disappear.

\begin{thm}
 \label{th:closed}
 The space of piecewise\hyp{}smooth and bounded loops in a smooth manifold forms a smooth manifold in the sense of \cite{akpm} modelled on the space of piecewise\hyp{}smooth and bounded loops in Euclidean space.
\end{thm}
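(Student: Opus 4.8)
The plan is to deduce Theorem~\ref{th:closed} from the general construction of \cite{math.DG/0612096}, which reduces the problem of placing a smooth manifold structure on a space of loops in an arbitrary finite dimensional manifold $M$ --- modelled on the corresponding space of loops in $\R^n$, $n=\dim M$ --- to the verification of a short list of conditions that concern only the space of model loops in $\R$ and $\R^n$. Those conditions, reviewed in section~\ref{sec:prelim}, include closure under post-composition with smooth maps of Euclidean spaces (and with smooth maps defined on open subsets, applied to loops whose image lies in the subset), a locality and gluing property, smoothness in the sense of \cite{akpm} of the maps induced by post-composition, and $\ci$-completeness of the model space. So the whole task is to carry these out for $\psb\R^n$, the space of piecewise-smooth bounded loops in $\R^n$; the manifold structure, its model space, and its independence of choices then follow by quoting \cite{math.DG/0612096}.

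First I would fix the topology on $\psb\R^n$. For a finite subset $P\subseteq S^1$ let $\psb_P\R^n$ be the space of continuous loops that restrict, on each component of $S^1\setminus P$, to a smooth map with bounded derivatives, topologised by the seminorms $\gamma\mapsto\sup_{S^1\setminus P}|\gamma^{(j)}|$, $j\ge 0$; this is a Fr\'echet space, and for $P\subseteq Q$ the inclusion $\psb_P\R^n\hookrightarrow\psb_Q\R^n$ is a closed embedding, so $\psb\R^n=\varinjlim_P\psb_P\R^n$ --- the locally convex colimit over the directed set of finite subsets of $S^1$ --- is a well-defined locally convex space, and this is the topology I would use. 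With this description the ``soft'' conditions are routine: post-composition with a smooth map creates no new breaks, and the higher-order chain rule together with the boundedness of that map and its derivatives on the compact image of a loop keeps all derivatives bounded, so a smooth map $f$ induces $\psb_P\R^m\to\psb_P\R^n$ and hence $\psb\R^m\to\psb\R^n$, with the same argument handling maps defined only on an open set (a loop landing in the set lands in a compact subset of it). Locality and gluing hold because piecewise-smoothness and boundedness of derivatives are local conditions on a loop, and the splitting of $\psb\R^n$ into constant loops plus mean-zero loops is furnished by the averaging projection, continuous on each step and hence on the colimit. Smoothness of the induced maps is then reduced, through the exponential law of \cite{akpm}, to the statement that every smooth curve into $\psb\R^n$ locally factors through a single step $\psb_P\R^n$ --- on which the induced map is the superposition operator of a smooth map, hence smooth by the $\Omega$-lemma.

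The heart of the matter, and the condition that genuinely fails for the unbounded loops of Theorem~\ref{th:open}, is $\ci$-completeness. My plan is to deduce it from the assertion that the colimit $\varinjlim_P\psb_P\R^n$ is \emph{regular}: every bounded subset is contained in, and bounded in, a single step $\psb_P\R^n$. Granting this, a Mackey-Cauchy sequence lies in some step, which is a complete Fr\'echet space and hence $\ci$-complete, so the sequence converges there and a fortiori in $\psb\R^n$; and the same reduction to a step supplies the factorisation of smooth curves invoked above. The main obstacle is exactly this regularity statement, because the colimit runs over an \emph{uncountable} directed set, so the standard regularity theorems for countable $(\mathrm{LF})$-spaces do not apply. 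This is precisely where boundedness of the derivatives earns its keep: in the bounded setting every break of a loop is registered by a genuine jump, at some finite order, of one of its derivatives --- each of which, being bounded, extends continuously to the closed arc --- so the steps $\psb_P\R^n$ really do ``see'' the break points, in contrast with the situation of Theorem~\ref{th:open}. Concretely, I expect to argue that if a set $B$ of loops had breaks at infinitely many points of $S^1$, one could manufacture an absolutely convex neighbourhood of zero in the colimit --- built from the step seminorms with rapidly growing weights attached, on each successive step, to the newly appearing break points --- that fails to absorb $B$; hence the union of the break sets of the loops in $B$ is finite, contained in some $P_0$, so $B\subseteq\psb_{P_0}\R^n$, and boundedness in the colimit then forces boundedness in the Fr\'echet space $\psb_{P_0}\R^n$. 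Making this construction precise --- in particular checking that the neighbourhood so built really meets every $\psb_P\R^n$ in a neighbourhood of zero and really escapes $B$ --- is the one genuinely delicate point; the rest is bookkeeping together with the machinery already assembled in \cite{math.DG/0612096} and \cite{akpm}.
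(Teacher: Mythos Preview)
Your proposal is correct and follows essentially the same architecture as the paper: verify the conditions of \cite{math.DG/0612096} for \(L_{\psb}\R\), with the crux being regularity of the uncountable colimit (every bounded set lies in some step \(L_{F,b}\R\)), from which both \(\ci\)\enhyp{}completeness and the local factorisation of smooth curves needed for condition~\ref{cond:postcomp} follow.

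The one notable technical difference is in the proof of regularity itself. Where you propose to build by hand a \(0\)\enhyp{}neighbourhood in the colimit that fails to absorb a set whose loops have breaks at infinitely many points, the paper instead assembles the jump functionals \(\lambda_t\colon\gamma\mapsto(\gamma^{(k)}_+(t)-\gamma^{(k)}_-(t))_{k\ge 1}\) into a single continuous linear map \(\lambda_H\colon L_{H,b}\R\to\sum_{t\in H}\R^{\N}\) and then invokes the standard fact that a bounded subset of a locally convex direct sum meets only finitely many summands. Your direct construction would work, but the paper's device is cleaner, sidesteps the bookkeeping you yourself flag as delicate, and pays a further dividend: the same map \(\lambda_H\) is later shown to be a quotient with kernel \(L\R\), and this drives much of the analysis of completeness, nuclearity, and the countable\hyp{}versus\hyp{}uncountable dichotomy in section~\ref{sec:furprop}. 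For the smoothness of \(\phi_*\) on a fixed step the paper is also a little more explicit than an appeal to the \(\Omega\)\enhyp{}lemma: it identifies \(L_{F,b}\R^n\), via Seeley's extension theorem, with a direct summand of a finite product of copies of \(\Ci(\R,\R^n)\) and then applies the exponential law of \cite{akpm} directly.
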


Unfortunately it is difficult to prove even elementary properties of this manifold:

\begin{qn}
 Does a manifold of piecewise\hyp{}smooth and bounded loops admit smooth partitions of unity?
\end{qn}

We conjecture that the model space is not even smoothly regular, something that we require for a positive answer to the above question.

\medskip

However there is a more serious problem with this manifold.
Consider the natural action of the diffeomorphism group of the circle on this manifold.
It is easy to see that this action is by diffeomorphisms but that is the best that one can say.

\begin{thm}
 \label{th:circle}
 The image of the diffeomorphism group of the circle in the diffeomorphism group of a manifold of piecewise\hyp{}smooth and bounded loops is totally disconnected, even when the latter has the topology of pointwise\hyp{}convergence.
 The image of the circle acting by rigid rotations is discrete.
\end{thm}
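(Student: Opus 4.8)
Let $M$ be such a manifold, of piecewise-smooth bounded loops in a smooth manifold $X$, write $f\colon\smth(S^1)\to\smth(M)$ for the reparametrisation action $f(\phi)(\gamma)=\gamma\circ\phi^{-1}$, and let $G=f(\smth(S^1))$ be its image. The plan is to prove the slightly stronger statement that any two distinct elements of $G$ can be separated by a subset of $G$ that is simultaneously open and closed for the topology of pointwise convergence; this at once forces every connected component of $G$ to be a single point. So fix distinct $g_1,g_2\in G$, choose $\phi_i\in\smth(S^1)$ with $g_i=f(\phi_i)$, and observe that $g_1\neq g_2$ gives $\phi_1\neq\phi_2$, hence a point $t_0$ with $\phi_1(t_0)\neq\phi_2(t_0)$. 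Pick a short open arc $I$ containing $\phi_1(t_0)$ but not $\phi_2(t_0)$, and a test loop $\gamma_0\in M$ with exactly one break, situated at $t_0$, at which the first derivative has a nonzero jump $J_0$ (such loops are elementary to write down, one takes $\gamma_0$ with image inside a coordinate ball of $X$, and, having a single break, $\gamma_0$ automatically has trivial group of rotational symmetries). Define $\Lambda\colon G\to\{0,1\}$ by setting $\Lambda(h)=1$ exactly when the loop $h(\gamma_0)$ has a break lying in $I$. Since $g_i(\gamma_0)=\gamma_0\circ\phi_i^{-1}$ has its unique break at $\phi_i(t_0)$, we get $\Lambda(g_1)=1$ and $\Lambda(g_2)=0$, so everything reduces to proving that $\Lambda$ is continuous.

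For this, note that $\Lambda$ is the composite of the evaluation $\mathrm{ev}_{\gamma_0}\colon G\to M$, $h\mapsto h(\gamma_0)$ --- continuous for the pointwise topology, with image the orbit $R=\{\gamma_0\circ\psi:\psi\in\smth(S^1)\}$ --- followed by the indicator of the set of loops that have a break in $I$; since composition with that indicator takes a locally constant function to another one, it suffices to prove that the position of the (unique) break is a locally constant function $R\to S^1$, i.e.\ that every $\gamma^*\in R$, with break at $p^*$, has an $M$-neighbourhood within which no element of $R$ breaks anywhere but at $p^*$. As $\gamma^*$ has the same image as $\gamma_0$, which lies in a coordinate ball, we may use the associated flat chart of $M$ at $\gamma^*$: it identifies a neighbourhood of $\gamma^*$ with a neighbourhood of $0$ in the model space of piecewise-smooth bounded loops in Euclidean space, by $\eta\mapsto\gamma^*+\eta$ in coordinates, under which the first-derivative jump at each point is additive in $\gamma^*$ and $\eta$. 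The crucial point is that the jump seminorm on the model space is continuous: writing $p(\eta)=\sup_{t}\lvert\eta'(t^+)-\eta'(t^-)\rvert$, a finite supremum over the finitely many breaks of $\eta$, one checks that $p$ is a seminorm whose restriction to each Fr\'echet stratum $L_F$ (loops smooth on the closed arcs cut out by a finite set $F$) is the maximum of the finitely many continuous seminorms $\eta\mapsto\lvert\eta'(t^+)-\eta'(t^-)\rvert$ with $t\in F$; since the model space carries the locally convex colimit topology of these strata, $p$ is continuous and $\{p<\varepsilon\}$ is a neighbourhood of $0$. Now $\gamma^*$ is not $C^1$ at $p^*$, so its first-derivative jump $J^*$ there is nonzero; while any $\gamma\in R$ breaking somewhere else is $C^1$ at $p^*$, so its coordinate representative $\eta=\gamma-\gamma^*$ has first-derivative jump $-J^*$ at $p^*$ and hence $p(\eta)\geq\lvert J^*\rvert$. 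Therefore the image under the flat chart of $\{p<\lvert J^*\rvert/2\}$ is an $M$-neighbourhood of $\gamma^*$ meeting $R$ only in loops that break at $p^*$; this is the local constancy sought, so $\Lambda$ is continuous, $\Lambda^{-1}(1)$ is clopen, and $G$ is totally disconnected.

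The assertion about rigid rotations follows by running the same estimate on the single orbit $O=\{\gamma_0\circ R_{-s}:s\in S^1\}$. A rotation carries the break of $\gamma_0$ around the circle but, having derivative one, leaves the jump there equal to $J_0$; so for $s\neq s'$ the loop $\gamma_0\circ R_{-s}$ is $C^1$ where $\gamma_0\circ R_{-s'}$ breaks, and in the flat chart at $\gamma_0\circ R_{-s'}$ the difference of the two loops has first-derivative jump $\pm J_0$ there, so $p$-norm at least $\lvert J_0\rvert$; thus $O$ is a discrete subset of $M$. Since $\gamma_0$ has trivial rotational symmetry, $s\mapsto\gamma_0\circ R_{-s}$ is injective, so evaluation at $\gamma_0$ carries $f(S^1)$ continuously and bijectively onto the discrete set $O$, and therefore each singleton of $f(S^1)$ is open, i.e.\ $f(S^1)$ is discrete. (A nontrivial rotation moves the break of $\gamma_0$, so $f$ is injective on $S^1$ and this really concerns the image.)

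I expect the real obstacle to be the claim that $\{p<\varepsilon\}$ is open in the model space, that is, pinning down the model-space topology precisely enough to see that the jump seminorm $p$ is continuous; this is where the colimit description of the space of piecewise-smooth bounded loops set up earlier has to be used. A secondary point, but a mild one, is the passage to a chart on a general target: the loop-space manifold structure is independent of the local addition chosen, so one is free to work with flat charts at loops whose image lies in a coordinate ball, which is all that is needed here. The remaining ingredients --- how a reparametrisation relocates breaks, and the point-set topology of arcs on the circle --- are routine.
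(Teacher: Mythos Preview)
Your proof is correct and complete; the jump seminorm $p$ is indeed continuous on the model space (being a seminorm whose restriction to each stratum $L_{F,b}\R$ is a finite maximum of continuous linear forms, hence continuous there, and $\{p<\epsilon\}$ is absolutely convex so opens in the locally convex inductive limit), and the rest of your argument goes through.

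Your route differs from the paper's in a genuine way. The paper does not use a jump seminorm. Instead it defines the open set $W\subseteq L_{\psb}\R$ of loops whose first derivative is bounded away from zero, and partitions $W$ into open convex cones $V(F)$ indexed by finite subsets $F\subseteq S^1$ of even cardinality, where $V(F)$ records the locus of sign changes of the derivative. A diffeomorphism $\sigma$ permutes these cones by $\sigma^*V(F)=V(\sigma^{-1}(F))$; so a test loop $\alpha\in V(F_0)$ with, say, $|F_0|=2$ yields a family of pairwise disjoint open sets $\{N(\{\alpha\},V(F)):|F|=2\}$ in $\mathcal{L}_s(L_{\psb}\R)$ that separates any two distinct diffeomorphisms, giving total disconnectedness, and separates all rigid rotations at once, giving discreteness. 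What the paper's approach buys is a ready-made combinatorial picture: an honest partition of an open set into labelled open cones permuted by $\smth(S^1)$, which makes the discreteness of the circle image immediate in one stroke. What your approach buys is economy: you need only a single continuous seminorm and a test loop with one break, rather than the sign-pattern machinery and loops with derivative bounded away from zero. Your argument also handles the passage from the model space to a general target $X$ explicitly via flat charts, whereas the paper works entirely in $L_{\psb}\R$ and leaves the transfer to $L_{\psb}M$ implicit in the general devolution results from the prequel.
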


The topology of pointwise\hyp{}convergence, also known as the \emph{weak} topology, is the coarsest that one would sanely consider.
Therefore this result holds for any other sensible topology one might try.
The practical upshot of this result is that many standard homotopies which rely on continuously reparametrising a loop do not work directly.
See \cite{math.DG/0612096} for other ways to get these homotopies to work.

\medskip

In the light of this failure we can modify our space slightly to correct our earlier inability to find smooth partitions of unity.
One reason for having piecewise\hyp{}smooth loops is to allow pasting of loops without overmuch reparameterisation.
This process only ever introduces breaks at \emph{rational} points on the circle.
That is to say, if we start with two loops whose breaks are at rational points then the resulting loop will have the same property.
This space is subtly better than that with breaks allowed at \emph{all} points.

\begin{thm}
 \label{th:rational}
 The space of piecewise\hyp{}smooth and bounded loops with breaks only at rational points in a smooth manifold forms a smooth manifold in the sense of \cite{akpm} modelled on the space of such loops in Euclidean space.
 This manifold admits smooth partitions of unity.
\end{thm}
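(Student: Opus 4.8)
The plan is to obtain the manifold structure from the criteria of \cite{math.DG/0612096}, exactly as in the proof of theorem~\ref{th:closed}, and then to read off smooth partitions of unity from a presentation of the model space as a countable strict inductive limit of nuclear Fr\'echet spaces. Write $\Lambda$ for the space of piecewise\hyp{}smooth, bounded loops in \R whose breaks all lie in $\mathbb{Q}/\mathbb{Z}$, topologised as in section~\ref{sec:prelim}. Every condition of \cite{math.DG/0612096} apart from \(\ci\)\enhyp{}completeness descends to $\Lambda$ from the space of all piecewise\hyp{}smooth, bounded loops in \R, where those conditions hold by theorem~\ref{th:closed}: the break set of a loop is not enlarged by post\hyp{}composition with a smooth map between Euclidean spaces, by pointwise addition or multiplication of two such loops, or by multiplication with a genuinely smooth function, and the bump functions on the circle used for the localisation step of \cite{math.DG/0612096} may also be taken smooth; in each case the break set remains inside $\mathbb{Q}/\mathbb{Z}$, and one checks in passing that $\Lambda$ is closed under the operations involved. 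The outstanding condition, \(\ci\)\enhyp{}completeness, will come out of the next paragraph; granting it, the construction of \cite{math.DG/0612096} produces the asserted smooth manifold of such loops in $M$, modelled on the space of such loops in Euclidean space.

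For a finite set $F\subseteq\mathbb{Q}/\mathbb{Z}$ let $\Lambda_F\subseteq\Lambda$ consist of the loops whose breaks lie in $F$. A smooth function on an open arc all of whose derivatives are bounded is Lipschitz, as is each of its derivatives, by the mean value theorem, and hence extends to a smooth function on the closed arc; consequently $\Lambda_F$ is linearly homeomorphic to a closed linear subspace of a finite product of spaces of the form \Ci of a compact interval, the subspace being cut out by the matching conditions at the points of $F$. Since \Ci of a compact interval is a nuclear Fr\'echet space, so is each $\Lambda_F$. The family $\{\Lambda_F\}$ is directed by inclusion and, crucially, has a cofinal \emph{sequence}, because $\mathbb{Q}/\mathbb{Z}$ is countable; the inclusions $\Lambda_F\hookrightarrow\Lambda_{F'}$ are closed topological embeddings, as smoothness across each of the finitely many extra points of $F'$ is a closed linear condition; and $\Lambda=\bigcup_F\Lambda_F$ carries the corresponding strict inductive\hyp{}limit topology. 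Thus $\Lambda$, and likewise the model space for the manifold (the space of such loops in Euclidean space), is a strict (LF)\hyp{}space with nuclear Fr\'echet steps.

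The remaining assertions now follow. A strict (LF)\hyp{}space is complete and hence \(\ci\)\enhyp{}complete, which closes the gap in the first paragraph: a Mackey\hyp{}Cauchy sequence in $\Lambda$ lies in, and Mackey\hyp{}converges in, a single Fr\'echet step $\Lambda_F$. Being a countable union of Lindel\"of (indeed Polish) subspaces, $\Lambda$ is Lindel\"of. And by \cite{akpm} a strict inductive limit of a sequence of nuclear Fr\'echet spaces is smoothly paracompact, and in particular smoothly regular. The loop manifold is itself Lindel\"of, because its standard charts may be indexed, up to a countable choice, by the finite decompositions of the circle into arcs with rational endpoints together with a chart of $M$ on each arc, of which there are only countably many; it inherits smooth regularity from its model, and being in addition Lindel\"of and modelled on a smoothly paracompact convenient vector space, it admits smooth partitions of unity by the transfer results of \cite{akpm}.

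The step that takes real care is the identification of $\Lambda$ with the strict inductive limit of the $\Lambda_F$: that the natural topology of section~\ref{sec:prelim} restricts to the intrinsic Fr\'echet topology on each $\Lambda_F$ and that each $\Lambda_F$ is closed in $\Lambda$. It is from exactly this presentation that \(\ci\)\enhyp{}completeness, the Lindel\"of property, and smooth paracompactness are all drawn, so if section~\ref{sec:prelim} already builds the topology as this colimit then little is needed beyond the analysis of the steps $\Lambda_F$ above; otherwise this comparison of topologies is the main obstacle.
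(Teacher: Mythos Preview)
Your reduction to a countable strict inductive limit of nuclear Fr\'echet steps \(\Lambda_F\) is exactly the paper's approach, and the manifold structure follows just as you say via the checklist of \cite{math.DG/0612096}. The genuine gap is in the sentence ``by \cite{akpm} a strict inductive limit of a sequence of nuclear Fr\'echet spaces is smoothly paracompact''. This is precisely the point the paper isolates as requiring real work. The statement in \cite[III.16.10]{akpm} does say this, but its proof of smooth regularity goes through \cite[III.16.6]{akpm}, which needs the \emph{extension property for smooth functions} along each inclusion \(\Lambda_F\hookrightarrow\Lambda_{F'}\). The paper observes that the classical counterexample of \cite[V.21.5ff]{akpm} adapts to the steps \(\Lambda_F\), so this extension property fails here and the black-box citation is not available. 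The paper's substitute is a modified version of \cite[III.16.6]{akpm} in which one only asks each step to be \(\m{S}_F\)-normal for the algebra \(\m{S}_F\) of smooth functions on \(\Lambda_F\) that \emph{do} extend to the limit; one then shows \(\m{S}_F\)-regularity using Hilbertian semi-norms coming from nuclearity of the ambient \(\Lambda\), bootstraps to \(\m{S}_F\)-normality via Lindel\"of, and finally obtains \(\Ci\)-regularity of \(\Lambda\). Without this detour your argument for smooth partitions of unity is incomplete.

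A second, smaller issue: your transfer of the Lindel\"of property (and hence smooth paracompactness) to the manifold \(L_{H,b}M\) via ``charts indexed by finite rational decompositions of the circle'' is not how the charts on a loop space are built; the standard charts come from a local addition on \(M\) and are indexed by the loops themselves, not by partitions of \(S^1\). The paper instead uses the devolution theorem quoted from \cite{math.DG/0612096}: once \(M\) embeds in Euclidean space, properties such as Lindel\"of and smoothly paracompact pass directly from the model \(L_{H,b}\R^n\) to \(L_{H,b}M\). You should invoke that rather than the ad hoc chart count.
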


In fact, for pasting loops we need only allow breaks at rational points with denominator a power of \(2\).
This would not introduce any advantages as it is the cardinality of the set of allowable breaks that is important.

We have, of course, lost the action by much of the diffeomorphism group of the circle in that we have to throw out any diffeomorphism which does not map the space of rational points into itself.
In particular, only rational rigid rotations are allowed.
This is not a huge loss as this action was problematic to begin with.

\medskip

In the last part of this paper we return to the space of piecewise\hyp{}smooth and bounded loops with arbitrary breaks.
As the natural circle action is quite an important feature of the structure of a loop space it is worth taking some time to see what happens if we try to fix it.
That is, what does it mean for the topology if we impose the condition that the natural circle action is continuous?
Define a loop \(\gamma\) to be \emph{\(S^1\)\enhyp{}odd} if it satisfies
 \(\gamma(t) + \gamma(\frac12 + t) = 0\)
for all \(t\).

\begin{thm}
 \label{th:other}
 Let \(E\) be a subspace of the space of continuous loops in a Euclidean space with a locally convex vector space topology satisfying the following properties:
 \begin{enumerate}
 \item
   \(E\) contains the space of piecewise\hyp{}smooth and bounded loops and this inclusion is continuous.
 \item
  The given topology on \(E\) is finer than that inherited from the space of continuous loops.
 \item
  The circle action on \(E\) is continuous.
 \item
   \(E\) is complete for its given topology.
 \end{enumerate}
 Then \(E\) contains the subspace of \(S^1\)\enhyp{}odd differentiable loops with derivative of bounded variation and this inclusion is continuous.
\end{thm}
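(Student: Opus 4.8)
The plan is to realise each loop $\gamma$ of the stated kind as an integral over the circle of translates of one fixed piecewise\hyp{}smooth and bounded loop, and then to transport that integral into $E$ using hypotheses (3) and (4). I would start by fixing the Green's function $G\colon S^1\to\R$ of $d^2/dt^2$ on the circle, that is, the function (unique up to an additive constant) whose distributional second derivative is $\delta_0-1$, the Dirac mass at $0$ minus the constant function $1$ (the latter being forced so that a periodic solution exists). On $S^1=\R/\Z$ one computes $G(t)=c-\tfrac12 t^2+\tfrac12 t$ for $t\in[0,1]$ and some constant $c$, so $G$ is continuous, piecewise polynomial with a single break at $0$, and has bounded derivatives of every order; hence $G$ is a piecewise\hyp{}smooth and bounded loop, and $G\in E$ by hypothesis (1).

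Let now $\gamma$ be an $S^1$\enhyp{}odd differentiable loop whose derivative has bounded variation. (Such a $\gamma$ is automatically $C^1$, since a function with the intermediate value property and bounded variation is continuous.) Let $\mu$ denote the distributional second derivative $\gamma''$; since $\gamma'$ has bounded variation this is a finite signed Radon measure on $S^1$, and $\mu(S^1)=0$ because $\gamma'$ is periodic. Comparing Fourier coefficients --- using $\widehat G(n)=-1/(4\pi^2n^2)$ and $\widehat\mu(n)=-4\pi^2n^2\,\widehat\gamma(n)$ for $n\neq0$, together with $\int_{S^1}\gamma=0$ (which holds because $\gamma$ is $S^1$\enhyp{}odd) --- one obtains
\[
 \gamma(t)=\int_{S^1}G(t-s)\,d\mu(s),
\]
the right side being a continuous function of $t$ since $G$ is uniformly continuous and $\mu$ is finite. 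The $S^1$\enhyp{}odd hypothesis is used here only through $\int_{S^1}\gamma=0$; without it one would simply add the constant loop $\int_{S^1}\gamma$, which also lies in $E$ by hypothesis (1).

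The crux is to reinterpret this scalar identity as a vector\hyp{}valued integral in $E$. The map $\Phi\colon S^1\to E$ sending $s$ to the translate $G(\cdot-s)$ is the orbit map of $G$ for the circle action, hence continuous by hypothesis (3), so $\Phi(S^1)$ is compact in $E$. As $E$ is complete (hypothesis (4)) it is quasi\hyp{}complete, and in a quasi\hyp{}complete space the closed convex hull of a compact set is compact; writing $\mu=\mu_+-\mu_-$, the Pettis integrals (barycenters) $\int_{S^1}\Phi\,d\mu_\pm$ therefore exist in $E$, and so does $I:=\int_{S^1}\Phi\,d\mu\in E$. For every $t\in S^1$ and every coordinate the functional ``evaluate the loop at $t$ and take that coordinate'' is continuous on $E$, by hypothesis (2), and commuting it past the integral shows that the continuous loop underlying $I$ is $t\mapsto\int_{S^1}G(t-s)\,d\mu(s)=\gamma(t)$. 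Since $E$ is a subspace of the space of continuous loops, this means $\gamma\in E$. For the continuity of the inclusion, topologise the space $V$ of $S^1$\enhyp{}odd differentiable loops with derivative of bounded variation by $\norm[\gamma]_\infty+\operatorname{Var}(\gamma')$; then for any continuous seminorm $p$ on $E$ the splitting above gives
\[
 p(\gamma)=p(I)\le\operatorname{Var}(\gamma')\cdot\sup_{s\in S^1}p\big(G(\cdot-s)\big),
\]
and the supremum is finite because $\Phi$ is continuous and $S^1$ compact; thus $V\hookrightarrow E$ is continuous. A general Euclidean target is treated one coordinate at a time.

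I expect the main obstacle to be the vector\hyp{}valued integration step. One must be sure that a continuous map from the compact space $S^1$ into the merely complete, possibly non\hyp{}metrisable space $E$ is Pettis\hyp{}integrable against a finite \emph{signed} measure, with the integral landing back in $E$ itself rather than in a completion or a bidual. What makes this work is that completeness of $E$ gives quasi\hyp{}completeness, and in a quasi\hyp{}complete space the closed convex hull of a compact set is compact, which pins the integral inside $E$. A lesser point to handle with care is the Fourier\hyp{}coefficient identity behind the integral representation of $\gamma$: it is cleanest either to observe that $\widehat\gamma(n)=O(n^{-2})$, so that every series involved converges absolutely, or to run the whole computation in the space of distributions on $S^1$.
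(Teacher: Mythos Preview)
Your argument is correct and takes a genuinely different route from the paper's. The paper fixes the piecewise\hyp{}linear tent loop \(\alpha_0\) rather than your Green's function \(G\), forms the closed convex circled hull \(A\) of its orbit, and then works combinatorially: it shows that the rotates \(\{R_t\alpha_0 : t\in[0,\tfrac12)\}\) are a linear basis of the span \(E_A\), identifies the gauge norm \(\norm_A\) with the \(\ell^1\)\hyp{}norm on the coefficients via an explicit change\hyp{}of\hyp{}basis, and then computes directly that this norm equals one quarter of the total variation of the derivative. The completion of \((E_A,\norm_A)\) is therefore exactly the space of \(S^1\)\hyp{}odd differentiable loops with derivative of bounded variation, and completeness of \(E\) forces that completion to sit inside \(E\). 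Your approach replaces this explicit identification of the gauge norm by a direct integral representation \(\gamma=\int_{S^1}\Phi(s)\,d\mu(s)\) realised as a Pettis integral, which is cleaner and avoids the basis calculation entirely; the paper's approach, on the other hand, is more elementary in that it never needs vector\hyp{}valued integration in a possibly non\hyp{}metrisable space, and as a byproduct it pins down the induced norm on the subspace precisely rather than just bounding it. Both arguments rest on the same underlying mechanism\emhyp{}compactness of a single orbit together with completeness of \(E\)\emhyp{}and both use the \(S^1\)\hyp{}odd hypothesis only to kill the zeroth Fourier coefficient, as you observe.
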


It is easy to adapt this to find other similar subspaces of \(E\).
However, the above is probably already enough to show that the cure is worse than the disease.

\medskip

The conditions that we need to check are given in the prequel to this paper, \cite{math.DG/0612096}.
That paper also contains some useful results which show how topological properties of the model spaces propagate to the manifold.
It also contains a discussion of circle actions on locally convex topological vector spaces that may serve to illustrate just how bad is the circle action on piecewise\hyp{}smooth and bounded loops.

The results in this paper, particularly the negative ones, depend on the analytical properties of the model space.
A certain amount of familiarity with functional analysis is therefore required to follow the arguments.
Our main reference texts are \cite{hs}, \cite{hj}, and \cite{akpm}.
We shall also determine several standard analytical and topological properties of the model spaces beyond those needed to build a manifold partly to illuminate the differences between the various spaces and partly as they can be useful for constructions beyond merely building the manifold.
The most obvious example being the existence of smooth partitions of unity.
Another example is that the construction in the author's paper \cite{math.DG/0505077} uses the fact that the space of smooth loops in Euclidean space is complete, reflexive, and nuclear.
Nuclear spaces are, of course, covered in \cite{hs} and \cite{hj} but the treatise \cite{ap} is also worth a look.

Although this paper is reliant on functional analysis, it is probable that the majority of its readers will be more topologically minded.
We have therefore included section~\ref{sec:pict} to give a little topological insight into the intricacies of the arguments.

\medskip

The rest of the paper is structured as follows.
In section~\ref{sec:prelim} we review the main results of \cite{math.DG/0612096} and list the requirements on the model space for the standard method of making a space of loops into a manifold to work.
In section~\ref{sec:psloops} we shall consider the space of all piecewise\hyp{}smooth loops and prove theorem~\ref{th:open}.
In section~\ref{sec:bound} we shall start our analysis of the space of piecewise\hyp{}smooth and bounded loops by proving theorem~\ref{th:closed}.
We include the proof of theorem~\ref{th:rational} as most of the structure is the same for breaks at only rational points as breaks at arbitrary points.
In section~\ref{sec:diff} we prove theorems~\ref{th:circle} and~\ref{th:other}.

\medskip

We regard the circle as the quotient \(\R/\Z\) and so shall write it additively.
We shall refer to connected subsets of \(S^1\)\emhyp{}including\(S^1\) itself\emhyp{}as \emph{intervals}.
We shall write these intervals in \(S^1\) as if they were intervals in \R without worrying about the wrap\hyp{}around factor.
This will save much annoyance with ``special cases''.
The justification for allowing this abuse is that we shall usually be using this notation when considering issues of continuity and, of course, a map from \(S^1\) is continuous if and only if it is continuous as a map from \R.

\section{An Overview}
\label{sec:pict}

Before we begin the analysis, the following discussion may help the reader understand what is going on.

Let us start by considering our two types of loop.
Consider two loops and suppose that on a small patch one of them looks like the path on the left and the other on the right in figure~\ref{fig:start}.

\begin{figure}
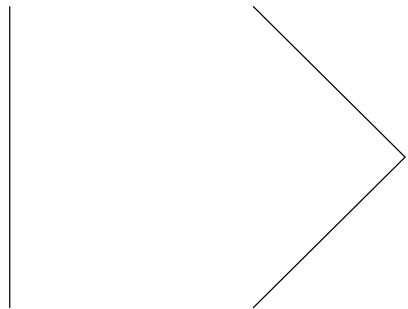

 \begin{center}
  \myfig{start}
  \caption{Close\hyp{}up of two paths\label{fig:start}}
 \end{center}
\end{figure}

We wish to interpolate between the two paths.
Firstly, we work in the space of all piecewise\hyp{}smooth loops.
In this space, we never look too closely at what happens near a break.
This results in figure~\ref{fig:unbounded}.
The important thing to notice in this figure is that all of the intermediate paths are smooth; the break develops as the path finally ``snaps''.

\begin{figure}
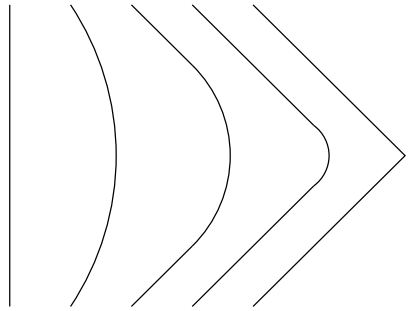

 \begin{center}
  \myfig{unbounded}
  \caption{Interpolation I: Piecewise\hyp{}Smooth\label{fig:unbounded}}
 \end{center}
\end{figure}

Secondly, we work in the space of piecewise\hyp{}smooth loops with bounded derivatives.
In this space we are allowed to examine what happens near breaks with the result that breaks cannot just disappear; they have to be gradually phased out.
This results in figure~\ref{fig:bounded}.
Here, all of the intermediate paths have a break which is slowly removed as the two pieces either side of the break come together.

\begin{figure}
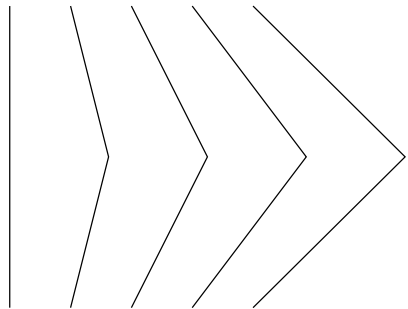

 \begin{center}
  \myfig{bounded}
  \caption{Interpolation II: Piecewise\hyp{}Smooth and Bounded\label{fig:bounded}}
 \end{center}
\end{figure}

Thus piecewise\hyp{}smooth loops are almost smooth in that any piecewise\hyp{}smooth loop can be approximated arbitrarily closely by a smooth loop.
Piecewise\hyp{}smooth bounded loops do not have this property which makes the space of piecewise\hyp{}smooth bounded loops seem larger than that of all piecewise\hyp{}smooth loops.

\medskip

Our second group of pictures is intended to illustrate theorem~\ref{th:open}.
As this is perhaps the most surprising of the results in this paper, it is worth a picture or two to explain it.

Recall that the inductive topology on a union of topological spaces is the finest topology making all the inclusions continuous.
Thus a set in the union is open if all of its traces (intersections) on the pieces are open.
When working with locally convex topological spaces one includes the additional condition that these open sets be locally convex.

With this in mind, we are led to consider subsets of the space of all piecewise\hyp{}smooth loops in a fixed Euclidean space which are  neighbourhoods of the origin whenever we restrict our attention to those loops with a prescribed finite set of breaks.
Theorem~\ref{th:open} states that such a set is a neighbourhood of the origin for the standard topology on the space of continuous loops.
That is to say, for such a set \(U\) there is some \(\epsilon > 0\) such that if \(\abs{\gamma(t)} < \epsilon\) for all \(t \in S^1\) then \(\gamma \in U\).

\begin{figure}
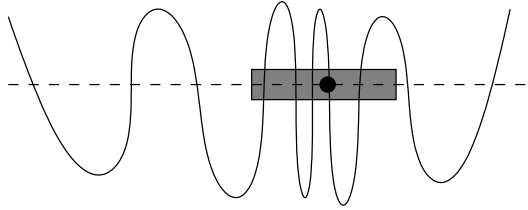

 \begin{center}
  \myfig{open}
  \caption{A Loop Close to the Zero Loop\label{fig:open}}
 \end{center}
\end{figure}

The heart of the proof of theorem~\ref{th:open} is illustrated in figure~\ref{fig:open}.
This is a piece of a smooth loop which is close to the zero loop in the \(C^0\)\enhyp{}topology but far away in the \(\Ci\)\enhyp{}topology.
Let us consider it in the piecewise\hyp{}smooth topology.
We restrict our attention at first to those loops which have at most one break and this at the point marked in the figure.
Now as we allow any piecewise\hyp{}smooth loops in our space, we cannot use any information about the derivatives near that point; these are potentially unbounded.
This is true even if our loop did not actually have a break at that point.

This means that there is some neighbourhood of the point, say the grey box in figure~\ref{fig:open}, in which we are only allowed to test the curve itself and not any of its derivatives.
As our original loop was \(C^0\)\enhyp{}close to the zero loop, if we hit it with a suitable bump function with support in the grey box, resulting in figure~\ref{fig:bump}, we are close to zero in the piecewise\hyp{}smooth topology.

\begin{figure}
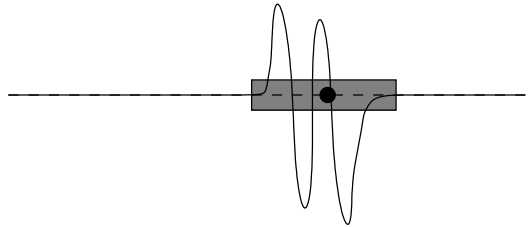

 \begin{center}
  \myfig{bump}
  \caption{Applying a Bump Function\label{fig:bump}}
 \end{center}
\end{figure}

Therefore our original loop is \emph{locally} close to zero in the piecewise\hyp{}smooth topology.
At this point we recall that we actually wanted the inductive locally convex topology, not just the inductive topology.
This allows us to assume that our open set is convex.
Together with the compactness of \(S^1\), we can now show that
our original loop is \emph{actually} close to zero in the piecewise\hyp{}smooth topology.

\medskip

\begin{figure}
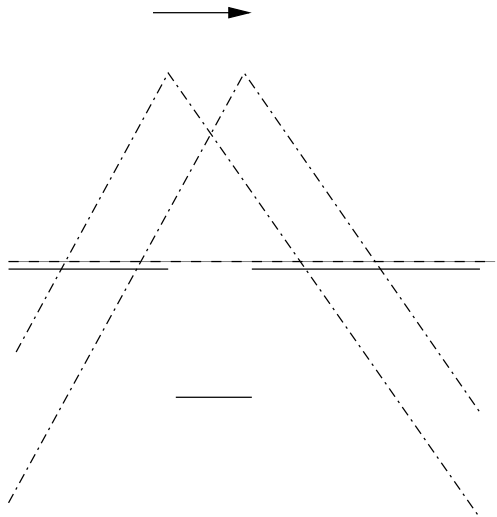

 \begin{center}
  \myfig{rotbreak}
  \caption{Rotating a Broken Loop\label{fig:rotbreak}}
 \end{center}
\end{figure}

Our last picture illustrates the difficulties with the circle action on piecewise\hyp{}smooth bounded loops.
Consider a loop with a break and rotate it slightly.
Figure~\ref{fig:rotbreak} is a picture of what happens near the break.
The dash\enhyp{}dotted lines are the loop and its rotation.
The solid line is the difference in the derivatives between the two loops.
This is zero except between the breaks where its value is independent of the amount of the rotation.
In the topology on the space of piecewise\hyp{}smooth and bounded loops, this is sufficient to separate the loop from its rotation no matter how small the rotation.
Should one try to fix the problem by taking into account the length of the non\hyp{}zero part then one finds one of two things happening.
Either one tries to apply this fix derivative by derivative in which case the problem is simply shifted up a derivative.
Or if one tries to apply this fix to the whole lot in one go, the net result is that once again derivatives cannot be used in defining the topology.
Broadly speaking, this is theorem~\ref{th:other}.

\medskip

Let us comment on why the negative results are bad news.

Firstly, let us consider theorem~\ref{th:open}.
The corollary of this states that the space of piecewise\hyp{}smooth loops is not complete.
Completeness is required in calculus to avoid issues with non\hyp{}existence of derivatives and integrals for trivial reasons.
This is perhaps easier to understand with integrals.
Using theorem~\ref{th:open} it is simple to construct a continuous curve in the space of piecewise\hyp{}smooth loops which does not have a Riemann integral.
The integral would exist in the space of continuous loops, but not in the space of piecewise\hyp{}smooth loops.
Similarly, it is possible to construct a curve in the space of piecewise\hyp{}smooth loops which ``ought to be'' differentiable but is not; the problem being that the derivative seems to exist but is not a piecewise\hyp{}smooth loop, merely a continuous one.
Thus completeness (or more specifically, \(\ci\)\enhyp{}completeness) is there to ensure that ``things which ought to exist actually do''.
It plays exactly the same r\^ole as completeness does for \R in terms of existence of limits.

The importance of smooth partitions of unity in differential topology is hard to overestimate.
So the possibility that the space of piecewise\hyp{}smooth bounded loops may not have them should at the least make one a little wary of this space.

As mentioned in the introduction, the fact illustrated above that the diffeomorphism group of the circle is totally disconnected as a subgroup of the diffeomorphism group of piecewise\hyp{}smooth bounded loops has a serious impact on many standard homotopies.
Specifically, any homotopy that relies on being able to reparametrise loops cannot be (directly) applied.
Rather one has to appeal to the more general homotopy equivalence between all of these spaces and, say, ordinary smooth loops where the reparameterisation homotopies do work.
Of course, if one has to do that then one may as well work with smooth loops throughout.

\medskip

Finally, let us comment on piecewise\hyp{}smooth loops in the literature.
Although their use is reasonably common, it is rare to find a specific topology mentioned.
A classic example can be found in \cite{kc}.

\begin{quotation}
\textbf{1.3} \emph{Definition.}
A piecewise smooth path (or simply a path) on a differentiable space \(X\) [for example, a finite dimensional smooth manifold] is a continuous map \(\alpha \colon I \to X\) [here, \(I\) is the unit interval] such that, for some partition \(0 = t_0 < \dotsc < t_r = 1\) of the unit interval, each restriction \(\alpha \restrict\!\! [t_{i-1}, t_i]\) is a plot of [smooth map into] \(X\).

Let \(P(X)\) denote the space of all (piecewise smooth) paths on \(X\) with the compact open topology.
\end{quotation}

It is easy to see from this that Chen is using what we call piecewise\hyp{}smooth bounded loops.
However he describes the topology as the ``compact open topology'' and does not elaborate on that.
This is reasonable given that his \emph{differentiable spaces} do not rely overmuch on the underlying topology (indeed, by \cite{kc3} he had dropped the requirement that a differentiable space be a topological space).
However, it does provoke the question as to what is the ``right'' topology on the space of piecewise\hyp{}smooth loops.

It is also interesting to note that by \cite{kc3}, Chen was working with honest smooth loops.
In section~1.5 of \cite{kc3}, a path is an honest plot which means that a loop in a manifold is a smooth loop in the standard sense.
Three paragraphs later, Chen uses the reparameterisation homotopy referred to above which, as we shall see, does not work for piecewise\hyp{}smooth loops.

\section{Manifold Requirements}
\label{sec:prelim}

In this section we shall summarise the work of \cite{math.DG/0612096} and list the conditions on the model space that are required to put a smooth structure on the space of smooth loops in a manifold.

We start by choosing a class of maps \(S^1 \to \R\) which we denote by \(L^x \R\), or by \(C^x(S^1, \R)\) if we wish to emphasise the domain.
We shall refer to these as \(C^x\)\enhyp{}loops.
Already implicit here is the assumption that these are genuine maps so that \(L^x \R\) is a subset of \(\map(S^1,\R)\).
One implication of this is that if we modify \(C^x\) in some fashion, say by completing with respect to some uniformity, then we must be careful to ensure that this completion still consists of genuine maps.
This rules out, for example, \(L^2\)\enhyp{}functions.

From \(L^x \R\) we define some other useful spaces of maps.
We identify the spaces \(\map(S^1, \R^n)\) and \(\map(S^1,\R)^n\) in the obvious way.
With this identification, we define \(L^x \R^n\) as \((L^x \R)^n\).
For \(A \subseteq \R^n\), we define \(L^x A\) as the subset of \(L^x \R^n\) of loops which take values in \(A\).
Here we use the fact that elements in \(L^x \R^n\) are genuine maps and so can be evaluated.

We now list the conditions required.
For why each is important, we refer the reader to \cite{math.DG/0612096}.

\begin{enumerate}
\item
 The condition of being a \(C^x\)\enhyp{}loop is \emph{local}.
 \label{cond:local}

 That is, a loop
  \(\gamma \colon S^1 \to \R\)
 is a \(C^x\)\enhyp{}loop if there is some open cover \(\m{U}\) of \(S^1\) and for each \(U \in \m{U}\) a \(C^x\)\enhyp{}loop \(\gamma_U\) such that \(\gamma\) agrees with \(\gamma_U\) on \(U\).

\item
 The set \(L^x \R\) is a sub\emph{space} of \(\map(S^1,\R)\).
 \label{cond:vspace}

\item
 The vector space \(L^x \R\) can be given a topology with respect to which it is a locally convex topological vector space.
 \label{cond:lctvs}

\item
 With its topology, \(L^x \R\) is a \emph{convenient} vector space.
 That is, it is \(\ci\)\enhyp{}complete or, equivalently, locally complete.
 \label{cond:cmplt}

\item
 As subspaces of \(\map(S^1,\R)\) we have inclusions:
 \[
  L \R \subseteq L^x \R \subseteq L^0 \R
 \]
 where \(L \R = \Ci(S^1, \R)\) and
  \(L^0 \R = C^0(S^1, \R)\).
 These inclusions are continuous with respect to the natural topologies on each.
 \label{cond:smthcts}

\item
 The condition of being \(C^x\) is preserved by post\hyp{}composition by smooth maps and the action of a given smooth map is smooth.
 That is, for \(\phi \colon U \to V\) a smooth map between open sets of Euclidean spaces, the induced map
  \(\phi_* \colon L^x U \to L^x V\)
 is well\hyp{}defined and is smooth in the sense of \cite{akpm}.
 \label{cond:postcomp}
\end{enumerate}

The primary results of \cite{math.DG/0612096} can be summarised in the following theorem.

\begin{theorem}
 Let \(L^x \R\) be a class of loops satisfying the above conditions.
 Let \(M\) be a smooth, finite dimensional, orientable manifold without boundary.
 Then \(L^x M\) can be defined and is a smooth manifold.

 If \(M\) can be embedded as a smooth submanifold of some Euclidean space then the following properties devolve from \(L^x \R\) to \(L^x M\): separable, metrisable, Lindel\"of, paracompact, normal, smoothly regular, smoothly paracompact, and smoothly normal.

 Let \(G\) be a sub\hyp{}Lie group of \(\smth(S^1)\) and suppose that \(L^x \R\) is invariant under the natural action of \(G\).
 Under the same conditions as above, the following properties of this action devolve from \(L^x \R\) to \(L^x M\): that the action is by continuous or smooth maps, that the action map is continuous or smooth, and that the representation map is continuous or smooth.
 \noproof
\end{theorem}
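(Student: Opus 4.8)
The plan is to run the standard ``tubular neighbourhood'' construction of a manifold of loops, invoking conditions~\ref{cond:local}--\ref{cond:postcomp} precisely at the points where that construction needs analytic input, and then to observe that every property named in the second and third parts is stable under finite products and passes to closed split submanifolds, so that it descends to \(L^x M\) along a closed embedding \(L^x M \hookrightarrow (L^x\R)^n\). Details of this argument are the subject of \cite{math.DG/0612096}; here we only indicate the shape of it.

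For the manifold structure I would fix a smooth local addition \(\tau\) on \(M\) --- say the one coming from a tubular neighbourhood of a closed embedding \(M\hookrightarrow\R^n\), or from a Riemannian exponential map --- so that \((x,v)\mapsto(x,\tau_x v)\) is a diffeomorphism from a neighbourhood of the zero section of \(TM\) onto a neighbourhood of the diagonal in \(M\times M\). For each smooth loop \(\gamma_0\colon S^1\to M\) I would take as chart domain the set of \(\gamma\in L^x M\) with \((\gamma_0(t),\gamma(t))\) in that neighbourhood for all \(t\), and as chart map \(\gamma\mapsto\bigl(t\mapsto\tau_{\gamma_0(t)}^{-1}\gamma(t)\bigr)\), a \(C^x\) section of \(\gamma_0^*TM\). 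Because \(S^1\) is the circle and \(M\) is orientable, \(\gamma_0^*TM\) is a trivial bundle, so this space of sections is, after a choice of trivialisation, \(L^x\R^m\) with \(m=\dim M\); note that smooth loops lie in \(L^x M\) by condition~\ref{cond:smthcts}, and that they are \(C^0\)-dense, so these charts cover \(L^x M\). Here condition~\ref{cond:postcomp} does the essential work: it shows that the chart maps and their inverses land in the correct space (they are post-composition with the smooth fibrewise maps built from \(\tau\) and from changes of trivialisation) and that the transition maps are smooth in the sense of \cite{akpm}; condition~\ref{cond:local} shows that a loop which is locally a \(C^x\) section is globally one, so the chart domains glue; condition~\ref{cond:cmplt} guarantees that the model space is a convenient vector space, which is part of what ``smooth manifold'' means in \cite{akpm}; and conditions~\ref{cond:vspace}--\ref{cond:lctvs} underlie all of this. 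Independence of the choices of \(\tau\) and of trivialisations is again a matter of condition~\ref{cond:postcomp}.

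For the devolution of the topological properties I would use condition~\ref{cond:smthcts}: since the topology on \(L^x\R^n=(L^x\R)^n\) is finer than the \(C^0\) topology, the evaluation maps \(L^x\R^n\to\R^n\) are continuous, and hence \(L^x M=\{\gamma:\gamma(t)\in M\text{ for all }t\}\) is closed in \(L^x\R^n\) once \(M\) is closed in \(\R^n\). Moreover, taking the charts above relative to the tubular neighbourhood of the embedding exhibits \(L^x M\) locally as the vanishing locus of the component normal to \(M\), that is, as a split closed submanifold of \((L^x\R)^n\). Each of the properties separable, metrisable, Lindel\"of, paracompact and normal, together with the smooth analogues smoothly regular, smoothly paracompact and smoothly normal, is stable under finite products and is inherited by closed subspaces --- for the smooth ones, by split closed submanifolds, using the relevant inheritance statements from \cite{akpm} --- so each passes from \(L^x\R\) first to \((L^x\R)^n\) and then to \(L^x M\).

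Finally, for a sub-Lie group \(G\le\smth(S^1)\) under which \(L^x\R\) is invariant, \(g\in G\) acts on loops by \(\gamma\mapsto\gamma\circ g^{-1}\), and invariance of \(L^x\R\) makes this well-defined on \(L^x M\). Read in the charts around \(\gamma_0\) and \(\gamma_0\circ g^{-1}\), the action of \(g\) becomes, after the trivialisations, the reparametrisation operator \(L^x\R^m\to L^x\R^m\), \(\sigma\mapsto\sigma\circ g^{-1}\), followed by a smooth fibrewise map; continuity, respectively smoothness, of this operator and of its dependence on \(g\) is exactly the hypothesis to be fed in, and patching over the atlas then transfers continuity or smoothness of the action map and of the representation \(G\to\smth(L^x M)\). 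Throughout, the one genuinely delicate point is the smoothness of the transition maps (and of the action maps): it rests on combining condition~\ref{cond:postcomp} with the cartesian closedness of the convenient calculus of \cite{akpm}, which is what promotes ``post-composition by a single fixed smooth map is smooth'' to ``the whole family is smooth''.
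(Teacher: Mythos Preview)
The paper gives no proof of this theorem: it is stated without proof and explicitly presented as a summary of the main results of the prequel \cite{math.DG/0612096}. Your sketch therefore has nothing in the present paper to be compared against. The first part --- building the manifold structure via a local addition and invoking conditions~\ref{cond:local}--\ref{cond:postcomp} at the appropriate junctures --- is the standard construction and is indeed what the prequel carries out; you have located the role of each condition correctly.

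Your devolution paragraph, however, leans on a blanket claim that is false in general topology. Lindel\"of, paracompact, and normal are \emph{not} stable under finite products (the Sorgenfrey line has all three properties while its square has none), and separability is \emph{not} inherited by arbitrary closed subspaces. So ``pass to the finite product \((L^x\R)^n\), then to the closed subspace \(L^x M\)'' does not, on its own, transport those properties. The arguments in \cite{math.DG/0612096} use more than bare heredity: the point is that \(L^x M\) sits in \((L^x\R)^n\) as a \emph{split} submanifold, hence locally as a retract, and one works chart by chart rather than appealing to abstract product and subspace permanence. Since you explicitly defer details to the prequel this is not a fatal gap in your proposal, but the sentence as written would not survive scrutiny and should be replaced by the split-submanifold/retract argument if you want the sketch to stand alone.
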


By ``devolve'' we mean that if a property holds for \(L^x \R\) then it holds for \(L^x M\).
The point of this theorem is that it transfers our attention wholly to the space \(L^x \R\) and thus plants us firmly in the realm of functional analysis.
For the action by diffeomorphisms, we shall be most concerned with the case where \(G = S^1\), acting by rigid rotations.

In addition to the properties listed in the above theorem, there are some other properties of \(L^x \R\) that will be useful to know.
These do not appear in the above list because they only make sense for linear spaces and therefore do not have any meaning for \(L^x M\).
However, they will have meaning for the tangent spaces of \(L^x M\) and this may be important.
The example given in the introduction is of the author's construction of the Dirac operator from \cite{math.DG/0505077} which used the fact that the space of smooth loops is a complete, nuclear, reflexive space and therefore we add these to the list of properties that we wish to determine.
For completeness, we have assumed that we have local completeness but this is just a low rung on the panoply of possibilities for completeness; other standard possibilities are complete, quasi\hyp{}complete, and sequentially complete.

In fact, we shall not be able to come up with definite answers for all of the properties that we consider and therefore have to leave some open questions.
We suspect that the answers to these would involve analysis on a far more intricate level that contained in this paper.

For comparison with the spaces of piecewise\hyp{}smooth and piecewise\hyp{}smooth and bounded loops, we list the properties of smooth and continuous loops.

The space of smooth loops, \(L \R\), is: separable, metrisable, Lindel\"of, paracompact, normal, smoothly regular, smoothly paracompact, smoothly normal, complete, nuclear, and reflexive.
The circle action is by diffeomorphisms, the action map is smooth, and the representation map is smooth.

The space of continuous loops, \(L^0 \R\), is: separable, metrisable, Lindel\"of, paracompact, normal, and complete.
It is not smoothly regular, smoothly paracompact, nor smoothly normal; see the remark after the statement of \cite[III.14.9]{akpm} and the references therein.
It is neither nuclear nor reflexive.
The circle action is by homeomorphisms but not diffeomorphisms; the action map is continuous but the representation map is not.

\section{Piecewise\hyp{}Smooth Loops}
\label{sec:psloops}

In this section we turn to the first of our spaces under consideration: piecewise\hyp{}smooth loops.
We start by defining and topologising the set of piecewise\hyp{}smooth maps and then consider the conditions.
We start this by looking at all the conditions other than~\ref{cond:cmplt} and~\ref{cond:postcomp} since these are relatively straightforward.
The main result in this section is that condition~\ref{cond:cmplt} does not hold and so we cannot build a smooth manifold.
We shall not consider condition~\ref{cond:postcomp} directly, although one can deduce from theorem~\ref{th:open} that condition~\ref{cond:postcomp} holds if we put on the space of piecewise\hyp{}smooth loops the induced smooth structure from the space of continuous loops.

\subsection{Definitions}

Before considering the conditions, we must define the space of piecewise\hyp{}smooth loops in \R. It is obvious what this must be: continuous everywhere and smooth except for a finite number of breaks.

\begin{defn}
 A loop
  \(\gamma \colon S^1 \to \R\)
 is said to be \emph{piecewise\hyp{}smooth} if it is continuous and there is some finite set \(F \subseteq S^1\) such that \(\gamma\) is smooth on \(S^1 \ssetminus F\).

 We write the set of all such loops as \(L_{\ps} \R\).
 For a subset \(H \subseteq S^1\) we denote by \(L_{H}\R\) the subset of \(L_{\ps} \R\) consisting of those maps with breaks constrained to lie in \(H\).

 Let \(\m{F}\) denote the set of all finite subsets of \(S^1\).
 For \(H \subseteq S^1\), let \(\m{F}(H)\) denote the set of all finite subsets of \(H\).
\end{defn}

It is clear that \(L_{\ps}\R\) is the union of the sets \(L_{F}\R\) for \(F \in \m{F}\).
The set \(\m{F}\) is directed by inclusion from which we deduce:

\begin{lemma}
  \(L_{\ps} \R\) is a subspace of \(\map(S^1,\R)\); that is, it satisfies condition~\ref{cond:vspace}.
\end{lemma}

\begin{proof}
 Let
  \(\alpha, \beta \in L_{\ps}\R\)
 and \(c \in \R\).
 Let \(A, B \in \m{F}\) be the corresponding subsets of \(S^1\).
 Now \(\alpha\) and \(\beta\) are continuous whence \(\alpha + c \beta\) is also continuous.
 Then \(\alpha\) is smooth on \(S^1 \ssetminus A\), whence also on
  \(S^1 \ssetminus (A \cup B)\),
 and \(\beta\) is smooth on \(S^1 \ssetminus B\), whence also on
  \(S^1 \ssetminus (A \cup B)\).
 Hence \(\alpha + c \beta\) is smooth on
  \(S^1 \ssetminus (A \cup B)\)
 and so
  \(\alpha + c \beta \in L_{\ps} \R\).
\end{proof}

Also in this section we shall verify the locality condition as this does not involve the topology.

\begin{lemma}
 The condition of being piecewise\hyp{}smooth is local; that is, \(L_{\ps} \R\) satisfies condition~\ref{cond:local}.
\end{lemma}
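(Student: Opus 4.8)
The plan is to use compactness of $S^1$ to reduce an arbitrary open cover to a finite one, and then to amalgamate the (now finitely many) break sets of the pieces into a single finite set.

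So suppose $\gamma\colon S^1 \to \R$ is a loop and $\m{U}$ is an open cover of $S^1$ such that for each $U \in \m{U}$ there is a piecewise\hyp{}smooth loop $\gamma_U$ agreeing with $\gamma$ on $U$. First I would observe that $\gamma$ is continuous: continuity is itself a local condition, each $\gamma_U$ is continuous, and $\gamma$ agrees with $\gamma_U$ on the open set $U$, so $\gamma$ is continuous on each member of the cover, hence on all of $S^1$.

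Next, since $S^1$ is compact, I would pick a finite subcover $U_1, \dotsc, U_n$ of $\m{U}$. For each $i$ let $F_i \in \m{F}$ be a finite set with $\gamma_{U_i}$ smooth on $S^1 \ssetminus F_i$, and set $F = F_1 \cup \dotsb \cup F_n$, again a finite subset of $S^1$. The claim is that $\gamma$ is smooth on $S^1 \ssetminus F$. Indeed, given $p \in S^1 \ssetminus F$, choose $i$ with $p \in U_i$; then $U_i \ssetminus F_i$ is open (as $F_i$ is finite, hence closed) and is a neighbourhood of $p$, on which $\gamma$ agrees with $\gamma_{U_i}$, which is smooth there. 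Since smoothness is local, $\gamma$ is smooth on $S^1 \ssetminus F$, so $\gamma \in L_{\ps}\R$ as required.

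There is no substantive obstacle here; the only point needing care is that passing to a finite subcover is genuinely necessary, since for an infinite cover the union $\bigcup_{U \in \m{U}} F_U$ of break sets need not be finite. The same argument applies verbatim to the constrained spaces $L_H \R$, with $\m{F}$ replaced throughout by $\m{F}(H)$, since a finite union of finite subsets of $H$ is again a finite subset of $H$.
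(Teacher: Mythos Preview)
Your proof is correct and follows essentially the same approach as the paper's: reduce to a finite subcover by compactness of \(S^1\), take the union of the finitely many break sets, and conclude smoothness on the complement. The only cosmetic difference is that you verify smoothness pointwise at each \(p \in S^1 \ssetminus F\), whereas the paper argues set-wise that \(\gamma\) is smooth on each \(U_j \ssetminus F\) and hence on their union \(S^1 \ssetminus F\).
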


\begin{proof}
 Let
  \(\alpha \in \map(S^1,\R)\)
 be such that there is an open cover \(\m{U}\) of \(S^1\) with functions \(\alpha_U \in L^x \R\) for each \(U \in \m{U}\) such that \(\alpha\) agrees with \(\alpha_U\) on \(U\).
 As continuity is a local property, i.e.~\(L^0 \R\) satisfies condition~\ref{cond:local}, \(\alpha\) is continuous.

As \(S^1\) is compact we can find \(U_1, \dotsc, U_n \in \m{U}\) covering \(S^1\).
Let \(\alpha_j \coloneqq \alpha_{U_j}\).
Let \(F_j\) be the breaks of \(\alpha_j\).
Each \(F_j\) is finite whence the union \(F \coloneqq \bigcup F_j\) is also finite.
As \(U_j \cap F_j \subseteq U_j \cap F\), \(\alpha_j\) is smooth on \(U_j \ssetminus F\).
Hence \(\alpha\) is smooth on each \(U_j \ssetminus F\), whence on \(S^1 \ssetminus F\), and is thus piecewise\hyp{}smooth.
\end{proof}

\subsection{Topology}

Our next task is to topologise \(L_{\ps} \R\).
To do this we use its description as the union of the directed family
 \(\{L_{F} \R, F \in \m{F}\}\).
There is an obvious way to topologise \(L_{F}\R\) for \(F \in \m{F}\).

\begin{defn}
 For \(F \in \m{F}\), define a topology on \(L_{F}\R\) as the projective topology for the maps:
 \begin{align*}
  L_{F}\R &%
  \to L^0\R, \\
  L_{F}\R &%
  \to \Ci(S^1 \ssetminus F, \R).
 \end{align*}
\end{defn}

Here, \(L^0\R\) and
 \(\Ci(S^1 \ssetminus F, \R)\)
are given their standard topologies.
Since \(F\) is a finite subset of \(S^1\), \(S^1 \ssetminus F\) is diffeomorphic to a finite union of open intervals of \R and this identification defines the topology on
 \(\Ci(S^1 \ssetminus F, \R)\).
The projective topology of locally convex topologies is again a locally convex topology, \cite[II.5]{hs}, so this defines a locally convex topological vector space structure on \(L_{F}\R\).
Notice that
 \(L_{\emptyset}\R = L\R\)
and here the given topology agrees with the standard one.

There is a natural way to topologise the union of a directed family: as an inductive limit.
It is important to note that this is the inductive limit in the category of locally convex topological vector spaces and \emph{not} in the category of topological spaces.
The inductive \emph{topology} is not, in general, a locally convex topology.

\begin{defn}
 Define the topology on \(L_{\ps}\R\) as the locally convex inductive limit of the directed family
  \(\{L_{F}\R : F \in \m{F}\}\).
\end{defn}

From the construction of this topology it is easy to verify condition~\ref{cond:smthcts}.

\begin{lemma}
 The inclusions \(L \R \to L_{\ps}\R\) and
  \(L_{\ps} \R \to L^0 \R\)
 are continuous.
\end{lemma}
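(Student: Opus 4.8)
The plan is to read off both statements directly from the defining properties of the locally convex inductive limit topology on \(L_{\ps}\R\), so that the proof is essentially a matter of unwinding definitions.

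For the inclusion \(L\R \to L_{\ps}\R\), recall that \(L\R = L_{\emptyset}\R\) is itself one of the spaces in the directed family \(\{L_{F}\R : F \in \m{F}\}\), and that the topology it inherits as a member of this family coincides with its standard topology (as remarked after the definition of the topologies on the \(L_F\R\)). Since the canonical map from any member of a directed family into its locally convex inductive limit is continuous — this is built into the construction of the inductive limit, see \cite{hs} — the inclusion \(L\R = L_{\emptyset}\R \to L_{\ps}\R\) is continuous.

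For the inclusion \(L_{\ps}\R \to L^0\R\), I would invoke the universal property of the locally convex inductive limit: a linear map from \(L_{\ps}\R\) into a locally convex topological vector space is continuous if and only if its restriction to each \(L_{F}\R\) is continuous. Applying this with target \(L^0\R\), it suffices to check that for every \(F \in \m{F}\) the composite \(L_{F}\R \to L_{\ps}\R \to L^0\R\) is continuous; but this composite is simply the inclusion \(L_{F}\R \to L^0\R\). By construction, \(L_{F}\R\) carries the projective topology for the two maps \(L_{F}\R \to L^0\R\) and \(L_{F}\R \to \Ci(S^1 \ssetminus F, \R)\), so the first of these maps is continuous by the very definition of that projective topology, and we are done.

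Neither step presents a real obstacle; the only point worth keeping in mind is that one must apply the universal property of the \emph{locally convex} inductive limit rather than of the (generally finer) inductive topology. This is legitimate here precisely because the target \(L^0\R\) is itself a locally convex topological vector space.
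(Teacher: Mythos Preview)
Your proof is correct and follows essentially the same approach as the paper: both use that \(L\R = L_{\emptyset}\R\) is a member of the inductive family, and both invoke the universal property of the inductive limit together with the continuity of \(L_{F}\R \to L^0\R\) coming from the projective topology. Your additional remark about the locally convex inductive limit versus the topological one is a useful clarification but not needed for the argument.
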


\begin{proof}
 This follows from the properties of the inductive limit: as \(L\R\) is one of the members in the family its inclusion is continuous; as all the maps \(L_{F}\R \to L^0\R\) are continuous (from the characterisation of the projective topology) the inclusion \(L_{\ps}\R \to L^0\R\) is continuous.
\end{proof}

\subsection{Completeness}

Unfortunately, \(L_{\ps}\R\) is not convenient.
This will come as a corollary of a very surprising result which states that the topology on \(L_{\ps}\R\) is that inherited from \(L^0\R\).
Since \(L_{\ps}\R\) is not closed as a subspace of \(L^0\R\) it cannot therefore be complete.
As \(L_{\ps} \R\) is thus a normed vector space, local completeness (a.k.a.~\(\ci\)\enhyp{}completeness) agrees with ordinary completeness and so \(L_{\ps}\R\) is not convenient.

To prove this result we need to examine the topologies concerned in a little more detail.
We start with our reference spaces.
The space \(L^0\R\) is a Banach space with norm:
\[
 \norm[\gamma]_\infty \coloneqq \sup\{\abs{\gamma(t)} : t \in S^1\}
\]

The space
 \(\Ci(S^1 \ssetminus F,\R)\)
is a Fr\'echet space with semi\hyp{}norms:
\[
 \rho_{C,n}(\gamma) \coloneqq \sup\{\abs{\gamma^{(k)}(t)} : t \in C, 0 \le k \le n\}
\]
for \(n \in \N\) and for \(C\) a  compact subset of \(S^1 \ssetminus F\).
The family of such compact subsets has a countable cofinal (under inclusion) subfamily.
Taking this family yields a countable family of semi\hyp{}norms and hence the structure of a Fr\'echet space.

The topology on \(L_{F}\R\) is the projective topology for its inclusion into the two above spaces.
Since there are only two spaces, the topology of the projective limit is very easy to determine.
Given \(0\)\enhyp{}neighbourhood bases \(\m{U}\) and \(\m{V}\) for the above two spaces, a \(0\)\enhyp{}neighbourhood base for \(L_{F}\R\) is given by the family
 \(\{U \cap V : U \in \m{U}, V \in \m{V}\}\).
By throwing out a few redundancies, we can find a \(0\)\enhyp{}neighbourhood base indexed by \((C,n,\epsilon)\) where \(\epsilon > 0\), \(n \in \N\), and
 \(C \subseteq S^1 \ssetminus F\)
is compact.
The corresponding \(0\)\enhyp{}neighbourhood is:
\begin{gather*}
 U(C,n,\epsilon) \coloneqq \big\{\gamma \in L_{F} \R : \\
  \sup\{ \abs{\gamma^{(k)}(t)} : 1 \le k \le n \text{ and } t \in C \text{ or } k = 0 \text{ and } t \in S^1\} < \epsilon \big\}.
\end{gather*}

The topology on \(L_{\ps}\R\) is the inductive topology of this family, taken over \(F \in \m{F}\).
The method for constructing a \(0\)\enhyp{}neighbourhood base for this is described in \cite[6.6.5]{hj} and \cite[II.6]{hs}.
First choose \(0\)\enhyp{}neighbourhood bases for each of the components, say \(\m{U}_F\).
From each \(\m{U}_F\) we choose one element, \(U_F\), and take the convex hull of their union in \(L_{\ps}\R\).
Doing this for all choices yields a \(0\)\enhyp{}neighbourhood base for \(L_{\ps}\R\).

Thus an element of this \(0\)\enhyp{}neighbourhood base is indexed by a family
 \(\{(C_F,n_F,\epsilon_F) : F \in \m{F}\}\)
where each
 \((C_F, n_F,\epsilon_F)\)
is as above.
The corresponding neighbourhood is:
\begin{align*}
 U((C_F, n_F,\epsilon_F)) \coloneqq \{ \sum_F \lambda_F \gamma_F :\;&
  \lambda_F \in \R, \gamma_F \in L_{F}\R, \\
  & \text{all but finitely many zero,} \\
  & \sum \abs{\lambda_F} \le 1, \\
  &
  \sup\{ \abs{{\gamma_F}^{(k)}(t)} : 1 \le k \le n_F, t \in C_F\} < \epsilon_F, \\
  &
  \sup\{ \abs{\gamma_F(t)} : t \in S^1\} < \epsilon_F\}.
\end{align*}

For a fixed \(F \in \m{F}\) the space \(L_{F}\R\) inherits a topology from its inclusion in \(L_{\ps}\R\).
It is this topology that we wish to examine next.

\begin{proposition}
 \label{prop:inherit}
 Let \(F \in \m{F}\).
 The topology that \(L_{F}\R\) inherits from \(L_{\ps}\R\) agrees with the topology that it inherits from \(L^0\R\).
\end{proposition}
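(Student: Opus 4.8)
The plan is to prove the equality by sandwiching the two topologies. One inclusion comes for free: since the inclusion \(L_{\ps}\R \to L^0\R\) is continuous (as already established), so is its restriction to \(L_F\R\) equipped with the topology inherited from \(L_{\ps}\R\), whence the \(C^0\)\enhyp{}topology on \(L_F\R\) is at most as fine as the inherited one. The reverse inclusion is therefore the whole content, and since both topologies on \(L_F\R\) are vector topologies it will suffice to show that every basic \(0\)\enhyp{}neighbourhood of \(L_F\R\) arising from \(L_{\ps}\R\) contains a \(C^0\)\enhyp{}ball about the origin. So I would fix a basic \(0\)\enhyp{}neighbourhood \(U\) of \(L_{\ps}\R\) with defining data \((C_G, n_G, \epsilon_G)\), \(G \in \m{F}\), in the notation above, and look for \(\delta > 0\) with \(\{\gamma \in L_F\R : \norm[\gamma]_\infty < \delta\} \subseteq U \cap L_F\R\).

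The one genuinely clever step will be to notice that enlarging a break set buys room near the new point: for any \(p \in S^1\) the set \(C_{F \cup \{p\}}\) is by definition a compact subset of \(S^1 \ssetminus (F \cup \{p\})\), hence misses an entire neighbourhood of \(p\). Exploiting this, for each \(p \in S^1\) I would put \(G_p \coloneqq F \cup \{p\} \in \m{F}\) and pick an open interval \(J_p \ni p\) with \(\overline{J_p} \cap C_{G_p} = \emptyset\); then extract a finite subcover \(J_{p_1}, \dots, J_{p_q}\) of the compact space \(S^1\); take a smooth partition of unity \(\beta_1, \dots, \beta_q\) with \(0 \le \beta_j \le 1\), \(\supp \beta_j \subseteq J_{p_j}\) and \(\sum_j \beta_j = 1\); and finally group the \(\beta_j\) by the value of \(G_{p_j}\), writing the distinct values as \(H_1, \dots, H_r\) and \(\tilde\beta_i \coloneqq \sum_{j \,:\, G_{p_j} = H_i} \beta_j\). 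One then checks that \(\sum_i \tilde\beta_i = 1\), that each \(\tilde\beta_i\) satisfies \(0 \le \tilde\beta_i \le 1\), and that \(\tilde\beta_i\) vanishes on a neighbourhood of \(C_{H_i}\), because every \(\supp\beta_j\) feeding into it is contained in some \(\overline{J_{p_j}}\) disjoint from \(C_{G_{p_j}} = C_{H_i}\).

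To conclude, I would set \(\delta \coloneqq \frac{1}{r}\min_{1 \le i \le r}\epsilon_{H_i}\), which is positive. For \(\gamma \in L_F\R\) with \(\norm[\gamma]_\infty < \delta\) I would exhibit the decomposition \(\gamma = \sum_{i=1}^r \frac{1}{r}\,(r\,\tilde\beta_i\gamma)\); that is, take the admissible data \(\lambda_{H_i} = 1/r\), \(\gamma_{H_i} = r\,\tilde\beta_i\gamma\), and coefficient zero on every other member of \(\m{F}\). One then verifies: each \(r\,\tilde\beta_i\gamma\) is continuous and smooth off \(F \subseteq H_i\), so lies in \(L_{H_i}\R\); the coefficients satisfy \(\sum_i \abs{\lambda_{H_i}} = 1\); one has \(\norm[r\,\tilde\beta_i\gamma]_\infty \le r\norm[\gamma]_\infty < r\delta \le \epsilon_{H_i}\); and since \(r\,\tilde\beta_i\gamma\) is identically zero on a neighbourhood of \(C_{H_i}\), all its derivatives vanish on \(C_{H_i}\), so the derivative part of the constraint holds trivially. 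Hence \(\gamma \in U\), which is exactly what was needed.

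I expect the only real obstacle to be \emph{spotting} the substitution \(G_p = F \cup \{p\}\); once that is in hand the rest is bookkeeping. The two places where care is genuinely needed are, first, checking that the decomposition produced lands exactly in the form permitted in the definition of \(U\)\emhyp{}this is where local convexity enters, since that form is precisely the absolutely convex hull \(\{\sum \lambda_G \gamma_G : \sum\abs{\lambda_G} \le 1\}\), and it is also where the compactness of \(S^1\) enters, via the finite subcover\emhyp{}and, second, the grouping of the partition of unity, which is needed so that each surviving bump \(\tilde\beta_i\) is supported away from its own testing set \(C_{H_i}\) rather than merely away from the individual sets \(C_{G_{p_j}}\). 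This also makes precise the heuristic of section~\ref{sec:pict}: near any point a loop that is small in the \(C^0\)\enhyp{}topology is automatically small in the piecewise\hyp{}smooth topology, because derivatives near a possible break are never tested, and local convexity together with compactness then promote this to a global estimate.
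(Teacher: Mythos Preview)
Your proof is correct and follows essentially the same route as the paper: the key substitution \(G_p = F \cup \{p\}\), the compactness\hyp{}and\hyp{}partition\hyp{}of\hyp{}unity argument, and the final convex combination are exactly what the paper does. Your extra step of grouping the bumps by the value of \(G_{p_j}\) is harmless but unnecessary, since each \(U(C_G,n_G,\epsilon_G)\) is itself convex and so the convex hull absorbs any repetition among the \(G_{p_j}\); the paper simply appeals to convexity of \(U\) without this bookkeeping.
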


\begin{proof}
 As the inclusion \(L_{\ps}\R \to L^0\R\) is continuous, the inherited topology on \(L_{F}\R\) is at least as fine as that induced by its inclusion in \(L^0\R\).
 Therefore we just need to show that the topology inherited from \(L^0\R\) is at least as fine as that from \(L_{\ps}\R\).

 To do this, we need to show that for each \(0\)\enhyp{}neighbourhood \(U\) on \(L_{\ps}\R\) there is some \(\eta > 0\) such that whenever \(\gamma \in L_{F}\R\) satisfies
  \(\norm[\gamma]_\infty < \eta\)
 then \(\gamma \in U\).
 It is sufficient to do this for a \(0\)\enhyp{}basis and thus for the \(0\)\enhyp{}neighbourhood
  \(U((C_G,n_G,\epsilon_G))\)
 for some fixed but arbitrary index set.

 Thus we fix the family \((C_G,n_G,\epsilon_G)\).
 For \(t \in S^1\), let \(F_t = F \cup \{t\}\) and note that this is in \(\m{F}\).
 Let
  \(C_t \coloneqq C_{F_t}\).
 This is a compact set which does not contain \(t\), hence the family
  \(\{S^1 \ssetminus C_t\}\)
 is an open cover of \(S^1\).
 As \(S^1\) is a compact manifold, there is a finite smooth partition of unity subordinate to this cover, say
  \(\{\tau_j : 1 \le j \le n\}\).
 Let
  \(t_1, \dotsc, t_n \in S^1\)
 be such that the support of \(\tau_j\) is contained in
  \(S^1 \ssetminus C_{t_j}\).
 Let \(F_j = F_{t_j}\), \(C_j = C_{t_j}\),
  \(\epsilon_j = \epsilon_{F_j}\),
 and \(n_j = n_{F_j}\).
 Let
  \(\eta = \frac1n \min\{\epsilon_j\}\)
 and note that this is strictly greater than zero.

 Let \(\gamma \in L_{F}\R\) be such that
  \(\norm[\gamma]_\infty < \eta\).
 For \(1 \le j \le n\) let
  \(\gamma_j = n \tau_j \gamma\).
 It is obvious each \(\gamma_j\) is piecewise\hyp{}smooth with breaks in \(F\) and so is an element of \(L_{F}\R\).
 Then as
  \(L_{F}\R \subseteq L_{F_j}\R\)
 we have
  \(\gamma_j \in L_{F_j}\R\).
 Now
 \[
  \norm[\gamma_j]_\infty = \norm[n \tau_j \gamma]_\infty \le n \norm[\gamma]_\infty < n \eta \le \epsilon_j,
 \]
 whilst \(\gamma_j\) is identically zero on a neighbourhood of \(C_j\) so for \(t \in C_j\) and \(k \in \N\),
  \(\abs{{\gamma_j}^{(k)}(t)} = 0\).
 Hence
  \(\gamma_j \in U(C_j,n_j,\epsilon_j)\)
 and thus
  \(\gamma_j \in U((C_G,n_G,\epsilon_G))\).

 Since
  \(U((C_G,n_G,\epsilon_G))\)
 is a convex set, it contains the following the finite sum:
 \[
  \sum_{j=1}^n \frac1{n} \gamma_j = \sum_{j=1}^n \frac1n n \tau_j \gamma = \gamma.
 \]
 Hence
  \(\gamma \in U((C_G,n_G,\epsilon_G))\).
 This completes the proof.
\end{proof}

It is not quite immediate from this that the topology on \(L_{\ps}\R\) is that inherited from \(L^0\R\) but we are not far off.
There is an important corollary of the above result which we need on our way.

\begin{corollary}
 The subspace \(L\R\) is dense in \(L_{\ps}\R\).
\end{corollary}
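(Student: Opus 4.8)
The plan is to reduce the statement to two ingredients: Proposition~\ref{prop:inherit}, which tells us that on each \(L_{F}\R\) the topology inherited from \(L_{\ps}\R\) is nothing but the restriction of the \(C^0\)\hyp{}topology, together with the classical fact that \(L\R = \Ci(S^1,\R)\) is dense in \(L^0\R = C^0(S^1,\R)\) for the supremum norm.

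First I would observe that it suffices to prove that, for every \(F \in \m{F}\), the subspace \(L\R\) is dense in \(L_{F}\R\) with the topology that \(L_{F}\R\) inherits from \(L_{\ps}\R\). (Note that \(L\R = L_{\emptyset}\R \subseteq L_{F}\R\), so this makes sense.) Indeed, \(L_{\ps}\R = \bigcup_{F \in \m{F}} L_{F}\R\), so any \(\gamma \in L_{\ps}\R\) lies in some \(L_{F}\R\); given a neighbourhood \(U\) of \(\gamma\) in \(L_{\ps}\R\), its trace \(U \cap L_{F}\R\) is a neighbourhood of \(\gamma\) in \(L_{F}\R\), hence meets \(L\R\), and therefore so does \(U\). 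Thus \(\gamma\) lies in the closure of \(L\R\) in \(L_{\ps}\R\), and as \(\gamma\) was arbitrary the claim follows.

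Now fix \(F \in \m{F}\). By Proposition~\ref{prop:inherit} the topology that \(L_{F}\R\) inherits from \(L_{\ps}\R\) coincides with the one it inherits from \(L^0\R\), that is, with the restriction of the supremum\hyp{}norm topology. So I need only show that \(L\R\) is dense in \(L_{F}\R\) for the supremum norm. But \(L_{F}\R \subseteq L^0\R\), and \(\Ci(S^1,\R)\) is dense in \(C^0(S^1,\R)\) for the supremum norm \emhyp{} for instance because the trigonometric polynomials are smooth and dense by the Stone\hyp{}Weierstrass theorem, or by convolving with a smooth periodic mollifier. Hence every element of \(L_{F}\R\) is a uniform limit of smooth loops, which is exactly what is required.

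There is essentially no obstacle here; the real work has already been carried out in Proposition~\ref{prop:inherit}. The only point worth making explicit is that the approximating functions must be genuine smooth \emph{loops}, i.e.\ smooth as maps on \(S^1\) rather than merely on \(S^1 \ssetminus F\); both of the approximation procedures just mentioned produce such periodic functions, so no further care is needed.
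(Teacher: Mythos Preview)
Your proof is correct and follows essentially the same approach as the paper: both reduce to Proposition~\ref{prop:inherit} to identify the inherited topology on \(L_{F}\R\) with the \(C^0\)\hyp{}topology, and then invoke the density of \(L\R\) in \(L^0\R\). The only cosmetic difference is that the paper phrases the argument directly in terms of a single \(0\)\hyp{}neighbourhood \(U\) in \(L_{\ps}\R\) rather than first stating the intermediate reduction to density in each \(L_{F}\R\).
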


\begin{proof}
 For each \(\gamma \in L_{\ps}\R\)
 and each \(0\)\enhyp{}neighbourhood \(U\) then we need to show that there is some \(\beta \in L\R\) such that \(\gamma - \beta \in U\).
 Fix \(\gamma\) and \(U\).
 There is some \(F \in \m{F}\) such that \(\gamma \in L_{F}\R\).
 By proposition~\ref{prop:inherit}, the inherited topology on \(L_{F}\R\) agrees with that from \(L^0\R\).
 Thus there is some \(0\)\enhyp{}neighbourhood \(V\) in \(L^0\R\) such that
  \(V \cap L_{F}\R \subseteq U \cap L_{F}\R\).
 Now \(L\R\) is dense in \(L^0\R\) so there is some \(\beta \in L\R\) such that \(\gamma - \beta \in V\).
 Since \(L\R\) is a subspace of \(L_{F}\R\),
  \(\gamma - \beta \in L_{F}\R\)
 also.
 Hence \(\gamma - \beta \in U\).
\end{proof}

\begin{corollary}
 The topology on \(L_{\ps}\R\) is that inherited from \(L^0\R\).
\end{corollary}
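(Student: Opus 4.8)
The plan is to deduce this from the two preceding results rather than from a direct estimate. One inclusion of topologies is already available: since the inclusion $L_{\ps}\R \to L^0\R$ is continuous, the topology of $L_{\ps}\R$ is at least as fine as the one inherited from $L^0\R$. So I would be left to show that every $0$\enhyp{}neighbourhood $U$ in $L_{\ps}\R$ contains a set of the form $B_\eta := \{\gamma \in L_{\ps}\R : \norm[\gamma]_\infty < \eta\}$ for some $\eta > 0$. As $L_{\ps}\R$ is locally convex, the closed absolutely convex $0$\enhyp{}neighbourhoods form a base, so I may assume $U$ is closed.

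Next I would extract $\eta$ from the $F = \emptyset$ instance of proposition~\ref{prop:inherit}. That says the topology $L\R = L_{\emptyset}\R$ inherits from $L_{\ps}\R$ coincides with the one it inherits from $L^0\R$, that is, with the $\norm[\cdot]_\infty$\enhyp{}topology on $L\R$. Hence $U \cap L\R$, which is a $0$\enhyp{}neighbourhood of $L\R$ for the inherited topology, contains $\{\beta \in L\R : \norm[\beta]_\infty < \eta\}$ for some $\eta > 0$ depending only on $U$.

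The claim is then that this $\eta$ works, i.e.\ that $B_\eta \subseteq U$. Given $\gamma \in L_{\ps}\R$ with $\norm[\gamma]_\infty < \eta$, the preceding corollary provides a net $(\beta_a)$ in $L\R$ converging to $\gamma$ in $L_{\ps}\R$; since that topology is finer than the one from $L^0\R$ we also get $\norm[\beta_a - \gamma]_\infty \to 0$, so $\norm[\beta_a]_\infty \to \norm[\gamma]_\infty < \eta$ and therefore $\norm[\beta_a]_\infty < \eta$ for all large $a$. For such $a$ we have $\beta_a \in \{\beta \in L\R : \norm[\beta]_\infty < \eta\} \subseteq U$, so the net $(\beta_a)$ is eventually in $U$; as $U$ is closed, $\gamma \in U$, as required. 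This gives $B_\eta \subseteq U$ and hence the corollary.

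The point to get right is the passage from the approximating smooth loops back to $\gamma$: the inductive limit topology need not be metrisable, so one must argue with nets, and the whole argument hinges on having chosen $U$ closed, so that the $C^0$\enhyp{}bound controlling the net survives the limit. It is worth recording why one does not try to prove $B_\eta \subseteq U$ head\hyp{}on by writing $\gamma$ as an explicit convex combination as in the proof of proposition~\ref{prop:inherit}: the number of pieces required, and the values of the $\epsilon_F$ that come into play, both depend a priori on the break set of $\gamma$, so no single $\eta$ is visible that way. Routing through the density of $L\R$ and the closedness of $U$ avoids this difficulty altogether.
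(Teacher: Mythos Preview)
Your argument is correct. It uses exactly the same two ingredients as the paper's proof\emhyp{}the density of \(L\R\) in \(L_{\ps}\R\) and the identification, via proposition~\ref{prop:inherit} with \(F=\emptyset\), of the trace topology on \(L\R\) with the \(\norm[\cdot]_\infty\)\enhyp{}topology\emhyp{}so the underlying strategy is the same. The only difference is in how the final step is packaged: the paper invokes the general principle that a Hausdorff locally convex topology is determined by its trace on a dense subspace (equivalently, uniqueness of completions forces \((L_{\ps}\R,\tau_{\mathrm{ind}})\) to embed topologically in \(\widehat{(L\R,\norm[\cdot]_\infty)}=L^0\R\)), whereas you unwind this principle by hand, passing to a closed absolutely convex \(0\)\enhyp{}neighbourhood and pushing a smooth approximating net through it. Your version is more self\hyp{}contained and makes explicit why closedness of \(U\) is the crucial hypothesis; the paper's version is terser and highlights that nothing special about loop spaces is being used at this stage.
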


\begin{proof}
 It is a corollary of the existence and uniqueness of completions of locally convex topological vector spaces that the topology on a locally convex topological vector space is completely determined by its trace on a dense subspace.
 Thus since \(L\R\) is dense in \(L_{\ps}\R\) and the trace topology agrees with that inherited from \(L^0\R\), this must also be the topology on \(L_{\ps}\R\).
\end{proof}

We can rephrase this result using the universal nature of inductive topologies.

\begin{corollary}
 Let \(\m{T}\) be a locally convex topology on \(L_{\ps}\R\) such that all the maps
  \(L_{F}\R \to L_{\ps}\R\)
 and the map \(L_{\ps}\R \to L^0\R\) are continuous.
 Then \(\m{T}\) agrees with the topology inherited from \(L^0\R\).
 \noproof
\end{corollary}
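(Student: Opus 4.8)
The plan is to deduce this purely formally from the preceding corollary together with the universal properties of inductive and projective limits. Write \(\m{T}_0\) for the topology on \(L_{\ps}\R\) inherited from \(L^0\R\); by the previous corollary this is the same as the locally convex inductive limit topology of the family \(\{L_F\R : F \in \m{F}\}\). The goal is to show \(\m{T} = \m{T}_0\), which I would do by two inclusions.

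First I would establish that \(\m{T}\) is coarser than \(\m{T}_0\). This is exactly the universal property of the locally convex inductive limit: \(\m{T}_0\), being the inductive limit topology, is the \emph{finest} locally convex topology on \(L_{\ps}\R\) for which every inclusion \(L_F\R \to L_{\ps}\R\) is continuous. Since \(\m{T}\) is, by hypothesis, a locally convex topology with this property, it follows that \(\m{T}\) is coarser than \(\m{T}_0\). It is worth recalling here the warning made earlier in this section that this is the inductive limit in the category of locally convex spaces, not in the category of topological spaces; the argument uses in an essential way that \(\m{T}\) is assumed locally convex.

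Next I would establish the reverse, that \(\m{T}_0\) is coarser than \(\m{T}\). The topology \(\m{T}_0\) inherited from \(L^0\R\) is the initial (projective) topology on \(L_{\ps}\R\) induced by the single map \(L_{\ps}\R \to L^0\R\), and as such it is the \emph{coarsest} topology, in particular the coarsest locally convex topology, making that map continuous. Since \(\m{T}\) makes \(L_{\ps}\R \to L^0\R\) continuous, \(\m{T}_0\) is coarser than \(\m{T}\).

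Combining the two inclusions gives \(\m{T} = \m{T}_0\), which is the assertion. There is no real obstacle here beyond keeping track of which extremal property (finest versus coarsest) is being invoked and noting that each is applied within the class of locally convex topologies; the substantive work has already been carried out in the earlier corollaries, namely in identifying \(\m{T}_0\) with the inductive limit topology via the density of \(L\R\) and the description of its trace.
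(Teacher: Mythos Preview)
Your argument is correct and is exactly what the paper has in mind: the corollary is stated without proof, introduced by the sentence ``We can rephrase this result using the universal nature of inductive topologies,'' and your two-inclusion argument via the finest/coarsest extremal properties is precisely that rephrasing spelled out.
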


Finally, we state the failure of \(L_{\ps}\R\) to be a good model space for a manifold, and also of any of the spaces derived from it.

\begin{corollary}
  \(L_{\ps}\R\) does not satisfy condition~\ref{cond:cmplt}.
\end{corollary}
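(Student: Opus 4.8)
The plan is to read off the statement from the corollary just proved, namely that the topology on \(L_{\ps}\R\) is exactly the trace of the norm topology of \(L^0\R\), and then invoke the fact---recalled in the introduction---that for a normed vector space local completeness (\(\ci\)\enhyp{}completeness) coincides with ordinary completeness. So the first step is simply to note that, carrying the subspace topology from the Banach space \(L^0\R\), the space \(L_{\ps}\R\) is itself a normed vector space, with norm the restriction of \(\norm[\cdot]_\infty\); hence to prove that it fails to be convenient it is enough to prove that it is not complete.

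Next I would use the standard fact that a subspace of a complete normed space is complete if and only if it is closed. Since \(L^0\R\) is complete, it remains only to exhibit that \(L_{\ps}\R\) is not closed in \(L^0\R\). For this I would combine two ingredients already available: the corollary above that \(L\R = \Ci(S^1,\R)\) is dense in \(L_{\ps}\R\), together with the continuity of \(L_{\ps}\R \to L^0\R\) and the classical density of \(L\R\) in \(L^0\R\); these give that \(L_{\ps}\R\) is dense in \(L^0\R\). On the other hand \(L_{\ps}\R\) is a \emph{proper} subspace of \(L^0\R\), since a continuous loop whose set of non\hyp{}smooth points is infinite---for instance a piecewise\hyp{}linear loop with infinitely many corners accumulating at a point, or a Weierstrass\enhyp{}type nowhere differentiable loop---lies in \(L^0\R \ssetminus L_{\ps}\R\). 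A proper dense subspace cannot be closed, so \(L_{\ps}\R\) is not closed, hence not complete, hence not \(\ci\)\enhyp{}complete, and condition~\ref{cond:cmplt} fails.

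I do not expect a genuine obstacle here: the real content was the identification of the topology as the \(C^0\)\enhyp{}topology, carried out in the preceding sequence of corollaries. The only point warranting a sentence of care is the step from ``not complete'' to ``not convenient'', which rests on the equivalence of local completeness with completeness in the normed setting; once the topology has been recognised as normed this is immediate. It is also worth remarking, as a consistency check, that this corollary is precisely the statement phrased after Theorem~\ref{th:open} that the space of piecewise\hyp{}smooth loops in a smooth manifold is not \(\ci\)\enhyp{}complete, and hence (by the corollary to Theorem~\ref{th:open}) cannot underlie a smooth manifold in the sense of \cite{akpm}.
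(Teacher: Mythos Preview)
Your proposal is correct and follows essentially the same route as the paper: identify \(L_{\ps}\R\) as a normed subspace of \(L^0\R\), note that for normed spaces convenience coincides with completeness, and conclude non\hyp{}completeness from the fact that \(L_{\ps}\R\) is a proper dense subspace of \(L^0\R\). The only minor redundancy is that density of \(L_{\ps}\R\) in \(L^0\R\) follows immediately from \(L\R \subseteq L_{\ps}\R\) together with the classical density of \(L\R\) in \(L^0\R\); the intermediate density of \(L\R\) in \(L_{\ps}\R\) is not needed here.
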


\begin{proof}
 The space \(L_{\ps}\R\) is a \emph{topological} subspace of \(L^0\R\) so is a normed vector space.
 As such it is convenient if and only if it is complete.
 As it is dense in \(L^0\R\) but is not equal to \(L^0\R\), it cannot be complete.
\end{proof}

\section{Piecewise\hyp{}Smooth and Bounded}
\label{sec:bound}

Having fallen at the first significant hurdle with piecewise\hyp{}smooth maps we now consider an alternative.
The problem with piecewise\hyp{}smooth maps is that we have to deny ourselves any control over the maps in the neighbourhood of a break.
In fact, we have to even deny ourselves the knowledge that there is a genuine break at a given point, and this\emhyp{}we have seen\emhyp{}leads to all sorts of trouble.
This suggests refining our type of maps so that we can impose some sort of order at the (potential) breaks.

\begin{defn}
 A \emph{piecewise\hyp{}smooth bounded} map
  \(\gamma \colon S^1 \to \R\)
 is a piecewise\hyp{}smooth map with the property that each derivative is bounded on its domain of definition.

 We write the set of all such maps as \(L_{\psb}\R\).
 For \(H \subseteq S^1\) we denote by \(L_{H,b}\R\) the set of all piecewise\hyp{}smooth bounded maps whose breaks lie in the set \(H\).
\end{defn}

In the piecewise\hyp{}smooth world, \(L_{H} \R\) for \(H\) other than \(S^1\) was merely a way\hyp{}stone on the path to \(L_{\ps}\R\).
In this case, we shall actually want to consider \(L_{H,b}\R\) as an end in itself.

Simple calculus yields the following result.

\begin{lemma}
 \label{lem:limits}
 Let
  \(\gamma \colon S^1 \to \R\)
 be a piecewise\hyp{}smooth map.
 Then \(\gamma\) is a piecewise\hyp{}smooth bounded map if and only if all the derivatives of \(\gamma\) have left and right limits at all points of \R.
 \noproof
\end{lemma}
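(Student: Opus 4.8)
The plan is to fix a single break point and work locally, reducing the claim to a statement about a smooth function on a half-open interval. Suppose $\gamma$ is piecewise-smooth with break set $F$, and let $p \in F$ (the points of $S^1 \ssetminus F$ are trivial, since $\gamma$ is smooth there and every derivative is automatically continuous, hence has both one-sided limits). Pick a small interval $(p-\delta, p+\delta)$ meeting $F$ only at $p$; then $\gamma$ restricts to a smooth function on $(p-\delta, p]$ and on $[p, p+\delta)$ separately — well, on the half-open pieces $(p-\delta,p)$ and $(p,p+\delta)$, and I want to understand the behaviour of each $\gamma^{(k)}$ as $t$ approaches $p$ from either side. So the core assertion, stripped of notation, is: \emph{a function smooth on $(a,b)$ whose restriction to $(a,b)$ has all derivatives bounded is precisely one for which every derivative extends continuously to $[a,b]$}, and in that case the extension is itself smooth on $[a,b]$. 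The lemma then follows by applying this on each side of each of the finitely many breaks.

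The two implications go as follows. For the easy direction, if every $\gamma^{(k)}$ has left and right limits at every point of $\R$, then on the compact interval $[p-\delta/2, p+\delta/2]$ each $\gamma^{(k)}$ ($k$ fixed) is a function that is continuous except at finitely many points where it has one-sided limits; such a function is bounded on a compact set. Covering $S^1$ by finitely many such intervals (compactness again) shows each $\gamma^{(k)}$ is globally bounded, so $\gamma \in L_{\psb}\R$. For the converse — the substantive direction — assume all derivatives are bounded. Work on $[p, p+\delta)$ and show $\lim_{t\to p^+}\gamma^{(k)}(t)$ exists for every $k$. The key tool is the mean value theorem / fundamental theorem of calculus: since $\gamma^{(k+1)}$ is bounded by some $M_{k+1}$ on the interval, $\gamma^{(k)}$ is Lipschitz with constant $M_{k+1}$ there, hence uniformly continuous, hence extends continuously to $p$; call the limit $a_k$. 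One then checks, by writing $\gamma^{(k)}(t) = a_k + \int_{p}^{t}\gamma^{(k+1)}(s)\,ds$ and using $\gamma^{(k+1)}(s) \to a_{k+1}$, that the extended function has one-sided derivative $a_{k+1}$ at $p$; so the continuous extension of $\gamma$ to $[p,p+\delta)$ is in fact $C^\infty$ with $k$-th derivative $a_k$ at the endpoint. The same argument on $(p-\delta,p]$ gives the left limits, and since the point $p$ was an arbitrary break this handles all of $F$; away from $F$ there is nothing to prove.

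I do not expect a serious obstacle here — this is exactly the "simple calculus" the paper advertises. The one point that needs a little care, rather than difficulty, is the induction bookkeeping in the converse: one wants to argue for all $k$ simultaneously, and the cleanest route is to note that boundedness of $\gamma^{(k+1)}$ alone already forces $\gamma^{(k)}$ to be Lipschitz and hence to have one-sided limits, so no induction on $k$ is really needed for the \emph{existence} of the limits; the induction only enters if one additionally wants to conclude that the one-sided extension is smooth (i.e.\ that the limit of $\gamma^{(k)}$ is the $k$-th one-sided derivative of the limit of $\gamma$), which follows by the fundamental-theorem-of-calculus computation above applied successively. A secondary bookkeeping point is the global step: one must invoke compactness of $S^1$ to pass from "bounded near each break" and "bounded on each compact piece of $S^1\ssetminus F$" to "globally bounded", but since $F$ is finite this is immediate. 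No further subtlety arises, so I would keep the written proof short and essentially just cite the mean value theorem.
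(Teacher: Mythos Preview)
Your proposal is correct and is exactly the ``simple calculus'' the paper alludes to; note that the paper does not actually give a proof of this lemma (it is marked with \verb|\noproof|), so there is nothing to compare against beyond the one-line justification preceding the statement. Your mean-value-theorem argument---boundedness of $\gamma^{(k+1)}$ forces $\gamma^{(k)}$ to be Lipschitz, hence to have one-sided limits---is the standard way to unpack that phrase, and the converse via compactness is equally routine.
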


This lemma allows us to ignore the breaks of a loop when dealing with its boundedness properties.
We can regard the derivatives as being effectively defined (albeit possibly multivalued) at the breaks and thus on the whole of \(S^1\).
As these extensions are still bounded we can allow ourselves the freedom to ignore issues of domains of definition when considering bounds on a given loop or loops.

This lemma provides us with straight\hyp{}forward verification of the first two conditions.

\begin{corollary}
  \(L_{H,b}\R\) is a subspace of \(L_{\ps}\R\) and thus satisfies condition~\ref{cond:vspace}.
\end{corollary}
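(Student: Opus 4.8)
The plan is to verify directly that \(L_{H,b}\R\) is closed under addition and scalar multiplication inside \(L_{\ps}\R\); condition~\ref{cond:vspace} (being a linear subspace of \(\map(S^1,\R)\)) then follows from the corresponding fact for \(L_{\ps}\R\). First I would observe the trivial inclusion: a piecewise\hyp{}smooth bounded map is in particular piecewise\hyp{}smooth, and if its breaks lie in \(H\) then it lies in \(L_{\ps}\R\), so \(L_{H,b}\R \subseteq L_{\ps}\R\).

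For scalar multiplication, if \(\gamma \in L_{H,b}\R\) and \(c \in \R\), then \(c\gamma\) is piecewise\hyp{}smooth (it is a scalar multiple of a piecewise\hyp{}smooth loop), its set of breaks is contained in that of \(\gamma\) and hence in \(H\), and each derivative \((c\gamma)^{(k)} = c\,\gamma^{(k)}\) is bounded because \(\gamma^{(k)}\) is. Thus \(c\gamma \in L_{H,b}\R\). For addition, take \(\alpha,\beta \in L_{H,b}\R\) with break sets \(A,B \in \m{F}(H)\). By the lemma showing \(L_{\ps}\R\) is a subspace, \(\alpha + \beta\) is piecewise\hyp{}smooth and smooth on \(S^1 \ssetminus (A\cup B)\), and \(A\cup B\) is again a finite subset of \(H\), so the breaks of \(\alpha+\beta\) lie in \(H\). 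It remains to check boundedness of the derivatives: the cleanest route is to invoke lemma~\ref{lem:limits}, which reduces boundedness to the existence of one\hyp{}sided limits of every derivative at every point of \(S^1\). Since \((\alpha+\beta)^{(k)} = \alpha^{(k)} + \beta^{(k)}\) wherever both are defined, and each of \(\alpha^{(k)}\), \(\beta^{(k)}\) has left and right limits everywhere by lemma~\ref{lem:limits}, so does their sum; hence \(\alpha+\beta \in L_{H,b}\R\).

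There is essentially no serious obstacle here; the only point needing a little care is that \(\alpha\) and \(\beta\) may have different break sets, so \(\alpha+\beta\) is \emph{a priori} smooth only off the union \(A\cup B\) and one must argue that its derivatives are still bounded on that (potentially larger exceptional set's complement). This is precisely the domain\hyp{}of\hyp{}definition bookkeeping that lemma~\ref{lem:limits} was set up to sidestep, so I would lean on it rather than juggle explicit bounds. Once addition and scalar multiplication are settled, \(L_{H,b}\R\) is a linear subspace of \(L_{\ps}\R\) and therefore of \(\map(S^1,\R)\), which is condition~\ref{cond:vspace}.
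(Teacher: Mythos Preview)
Your proof is correct and follows essentially the same approach as the paper: reduce to the known fact that \(L_{\ps}\R\) (and hence \(L_{H}\R\)) is a subspace to handle the piecewise\hyp{}smooth and break\hyp{}set conditions, and then invoke lemma~\ref{lem:limits} together with elementary limit arithmetic for the boundedness. The paper is slightly terser, treating \(\alpha + \lambda\beta\) in one stroke rather than separating scalar multiplication from addition, but the content is the same.
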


\begin{proof}
 Let
  \(\alpha,\beta \in L_{H,b}\R\)
 and \(\lambda \in \R\).
 We know already that
  \(\alpha + \lambda \beta\)
 is piecewise\hyp{}smooth and has breaks in \(H\) so we just need to show the boundedness property.
 This follows from basic results on limits using lemma~\ref{lem:limits}.
\end{proof}

\begin{corollary}
  \(L_{H,b}\R\) satisfies the locality condition,~\ref{cond:local}.
\end{corollary}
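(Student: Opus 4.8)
The plan is to lean on two facts already in hand: that piecewise\hyp{}smoothness is a local condition, and that once we know \(\gamma\) is piecewise\hyp{}smooth with breaks in \(H\) the only thing left is to bound its derivatives. So let \(\gamma \in \map(S^1,\R)\) be given, together with an open cover \(\m{U}\) of \(S^1\) and, for each \(U \in \m{U}\), a loop \(\gamma_U \in L_{H,b}\R\) with \(\gamma\) agreeing with \(\gamma_U\) on \(U\). Every \(\gamma_U\) lies in \(L_{\ps}\R\), so by the locality lemma for \(L_{\ps}\R\) we get \(\gamma \in L_{\ps}\R\). If \(\gamma\) fails to be smooth at some \(t\), choose \(U \in \m{U}\) with \(t \in U\); since \(\gamma = \gamma_U\) on the open set \(U\), the loop \(\gamma_U\) is not smooth at \(t\) either, so \(t\) lies in the break set of \(\gamma_U\), which is a finite subset of \(H\). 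Hence every break of \(\gamma\) lies in \(H\), that is, \(\gamma \in L_H\R\).

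It then remains to check that every derivative of \(\gamma\) is bounded. Fix \(k \in \N\). For each \(U \in \m{U}\) we have \(\gamma = \gamma_U\) on \(U\), so at every point of \(U\) at which \(\gamma\) is smooth the derivative \(\gamma^{(k)}\) coincides with \(\gamma_U^{(k)}\) and is therefore bounded in absolute value by \(\norm[\gamma_U^{(k)}]_\infty\); this quantity is finite because \(\gamma_U\) is piecewise\hyp{}smooth bounded, using the remark after lemma~\ref{lem:limits} that lets us treat \(\norm[\gamma_U^{(k)}]_\infty\) as a genuine bound in spite of the breaks. Passing to a finite subcover \(U_1,\dots,U_n\) of \(S^1\), every point where \(\gamma^{(k)}\) is defined lies in some \(U_j\), so \(\abs{\gamma^{(k)}} \le \max_j \norm[\gamma_{U_j}^{(k)}]_\infty\) throughout its domain. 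Since \(k\) was arbitrary, \(\gamma\) is piecewise\hyp{}smooth bounded, and together with the break constraint of the previous paragraph, \(\gamma \in L_{H,b}\R\). (Alternatively one may route this step through lemma~\ref{lem:limits} itself, comparing the one\hyp{}sided limits of \(\gamma^{(k)}\) at each point with those of a locally coinciding \(\gamma_U\).)

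I do not expect a genuine obstacle: both ingredients are already available and the argument is short. The one point that wants care is that ``agrees on \(U\)'' means agreement on an \emph{open} set; this is precisely what lets us conclude, at a point of \(U\) where \(\gamma\) is smooth, that \(\gamma\) and all its derivatives genuinely coincide with those of \(\gamma_U\) there, and hence that the crude global bound \(\norm[\gamma_U^{(k)}]_\infty\) really does control \(\gamma^{(k)}\) throughout \(U\), rather than only on the complement of \(\gamma_U\)'s own, possibly larger, break set.
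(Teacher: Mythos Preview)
Your argument is correct and close in spirit to the paper's, but the two diverge at the boundedness step. The paper invokes lemma~\ref{lem:limits} pointwise: at each \(t\in S^1\) some local witness \(\gamma_U\) has one\hyp{}sided limits of all derivatives at \(t\), hence so does \(\gamma\), and that characterisation immediately gives piecewise\hyp{}smooth bounded. You instead pass to a finite subcover and bound \(\gamma^{(k)}\) globally by \(\max_j \norm[\gamma_{U_j}^{(k)}]_\infty\); this is equally valid and perhaps more concrete, at the cost of invoking compactness where the paper does not need it. You even flag the paper's route as your parenthetical alternative.

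One point worth noting: you explicitly verify that the breaks of \(\gamma\) lie in \(H\), while the paper's proof does not mention \(H\) at all. Your extra paragraph is the right thing to include for a general \(H\subsetneq S^1\); the paper's omission is presumably because the argument is immediate from what is already written (a break of \(\gamma\) in \(U\) is a break of \(\gamma_U\), hence in \(H\)), but spelling it out does no harm.
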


\begin{proof}
 Let
  \(\gamma \colon S^1 \to \R\)
 be a map which is locally a piecewise\hyp{}smooth bounded map.
 As the set of piecewise\hyp{}smooth maps has the local property, \(\gamma\) is piecewise\hyp{}smooth.
 Thus we need to show that it has the boundedness property.
 This follows from lemma~\ref{lem:limits}: for \(t \in S^1\) there is a neighbourhood \(I\) and a loop \(\alpha\) such that \(\gamma = \alpha\) on \(I\) and \(\alpha\) has left and right limits of all derivatives at \(t\), whence so does \(\gamma\).
\end{proof}

\subsection{Topology}

Now we come to the topology.
We use essentially the same method as in the piecewise\hyp{}smooth case: first topologise the spaces \(L_{F,b}\R\) for \(F \in \m{F}\) and then express \(L_{H,b}\R\) as the inductive limit of these spaces for \(F \in \m{F}(H)\).

The most obvious way to describe the topology on \(L_{F,b}\R\) is as the topology of uniform convergence of derivatives on \(S^1 \ssetminus F\).
Using lemma~\ref{lem:limits} and a standard \(\epsilon/2\)\enhyp{}argument we can replace this with the topology of uniform convergence of left and right limits of derivatives on the whole of \(S^1\).
That is, a \(0\)\enhyp{}neighbourhood base consists of the sets:
\[
 U(n,\epsilon) \coloneqq \{\gamma \in L_{F,b}\R : \sup\{\abs{\gamma_\pm^{(k)}(t)} : t \in S^1, 0 \le k \le n\} < \epsilon\}.
\]

\begin{defn}
 Define the topology on \(L_{H,b}\R\) to be the inductive locally convex topology from the family
  \(\{L_{F,b}\R, F \in \m{F}(H)\}\).
\end{defn}

We now turn to checking the simpler conditions.

\begin{lemma}
  \(L_{H,b}\R\) sits nicely between smooth and continuous maps, that is it satisfies condition~\ref{cond:smthcts}.
\end{lemma}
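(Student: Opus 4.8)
The statement to establish is condition~\ref{cond:smthcts} for $L_{H,b}\R$: namely that we have continuous inclusions $L\R \hookrightarrow L_{H,b}\R \hookrightarrow L^0\R$. This is genuinely two separate (and in this case easy) verifications, and the plan is to handle each using nothing more than the explicit description of the inductive-limit topology on $L_{H,b}\R$ given by the $0$-neighbourhood bases just constructed. Since $L_{H,b}\R$ is a locally convex inductive limit of the family $\{L_{F,b}\R : F \in \m{F}(H)\}$, continuity of a linear map \emph{into} it can be checked on each term, and continuity of a linear map \emph{out of} it is equivalent to continuity of its restriction to each term — this universal property is the only structural fact needed.

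First I would treat $L\R \hookrightarrow L_{H,b}\R$. Since $\emptyset \in \m{F}(H)$ and $L_{\emptyset,b}\R = L\R$ with its standard Fr\'echet topology, the smooth loops form one of the terms in the inductive family, so the inclusion $L\R = L_{\emptyset,b}\R \hookrightarrow L_{H,b}\R$ is continuous directly from the definition of the inductive limit topology. (One should remark, exactly as in the piecewise-smooth case, that the topology $L_{\emptyset,b}\R$ inherits as a term of the family agrees with the usual $\Ci$-topology, which is immediate since for $F = \emptyset$ there are no breaks and the defining seminorms $U(n,\epsilon)$ are just the standard ones on $\Ci(S^1,\R)$.)

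Next I would treat $L_{H,b}\R \hookrightarrow L^0\R$. By the universal property of the inductive limit it suffices to show that for each $F \in \m{F}(H)$ the composite $L_{F,b}\R \hookrightarrow L^0\R$ is continuous. But this is transparent from the $0$-neighbourhood base $U(n,\epsilon)$ for $L_{F,b}\R$: taking $n = 0$ gives $U(0,\epsilon) = \{\gamma : \sup_t \abs{\gamma(t)} < \epsilon\}$, which is precisely the $\epsilon$-ball for $\norm[\cdot]_\infty$ restricted to $L_{F,b}\R$ (here we use that for $k=0$ the loop is continuous so $\gamma_\pm(t) = \gamma(t)$). Hence the $\norm[\cdot]_\infty$-topology on $L_{F,b}\R$ is coarser than its given topology, i.e. the inclusion into $L^0\R$ is continuous, uniformly in $F$; by the inductive limit property $L_{H,b}\R \hookrightarrow L^0\R$ is continuous.

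There is no real obstacle here — the content is entirely bookkeeping with the two neighbourhood bases — so the proof is a few lines. If anything the only point requiring a word of care is the one already flagged for $L\R$: one must confirm that the topology $L\R$ carries as the $F=\emptyset$ member of the family is literally the standard Fr\'echet topology and not something finer, which is clear since the seminorms coincide. I would present the argument as a short proof paralleling the corresponding lemma in the piecewise-smooth section, citing the inductive-limit construction recalled earlier for the universal property.
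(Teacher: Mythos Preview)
Your proof is correct and follows essentially the same approach as the paper: identify \(L\R\) with the \(F=\emptyset\) term of the inductive family to get the first inclusion, and use the universal property of the inductive limit together with the observation that \(U(0,\epsilon)\) is the \(\epsilon\)-ball for \(\norm[\cdot]_\infty\) to get the second. The paper's version is slightly terser (it just cites \(U(0,1)\) rather than \(U(0,\epsilon)\)), but the content is identical.
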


\begin{proof}
 That smooth loops are piecewise\hyp{}smooth bounded loops is obvious as the circle is compact.
 Thus
  \(L_{\emptyset,b}\R = L\R\);
 note that
  \(\emptyset \in \m{F}(H)\)
 whatever \(H\) is.
 It is then trivial to note that the topology on \(L \R\) is the same as that on \(L_{\emptyset,b} \R\).

 The inclusion \(L_{H,b}\R \to L^0\R\) is immediate from the definition.
 To show that it is continuous, we just need to show that the inclusions \(L_{F,b}\R \to L^0\R\) are continuous; the desired result then follows from the universal property of inductive limits.
 This follows from the simple observation that \(U(0,1)\) is the intersection of \(L_{F,b}\R\) with the unit ball in \(L^0\R\).
\end{proof}

\subsection{Completeness}

Having checked the simple conditions, we turn to completeness.
Most of the properties of \(L_{H,b}\R\) that are in the realm of functional analysis stem from the following two technical results.

\begin{proposition}
 \label{prop:subtop}
 Let \(F \in \m{F}(H)\).
 The topology on \(L_{F,b}\R\) is the same as that inherited from its inclusion in \(L_{H,b}\R\).
\end{proposition}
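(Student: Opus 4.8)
The plan is a straightforward double-inclusion argument, of which one direction is free. First I would observe that the inclusion \(L_{F,b}\R \to L_{H,b}\R\) is one of the structure maps of the inductive system defining the topology on \(L_{H,b}\R\), hence continuous; so the topology that \(L_{F,b}\R\) inherits from \(L_{H,b}\R\) is at least as coarse as its own topology. What remains is the converse inclusion: every \(0\)\enhyp{}neighbourhood of \(L_{F,b}\R\) in its own topology should contain the trace on \(L_{F,b}\R\) of some \(0\)\enhyp{}neighbourhood of \(L_{H,b}\R\), and it is enough to verify this for the basic sets \(U(n,\epsilon)\).

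So I would fix \(n \in \N\) and \(\epsilon > 0\). Using the standard description of a \(0\)\enhyp{}basis for a locally convex inductive limit (recalled above for \(L_{\ps}\R\), following \cite[6.6.5]{hj} and \cite[II.6]{hs}), I would take as candidate \(0\)\enhyp{}neighbourhood of \(L_{H,b}\R\) the absolutely convex hull \(W\) of \(\bigcup_{G \in \m{F}(H)} U(n,\epsilon)\), where for each \(G\) the set \(U(n,\epsilon)\) now denotes the corresponding basic set inside \(L_{G,b}\R\). Explicitly, \(W\) consists of the finite sums \(\sum_G \lambda_G \gamma_G\) with \(\gamma_G \in L_{G,b}\R\), \(\sum_G \abs{\lambda_G} \le 1\), and \(\sup\{\abs{\gamma_{G,\pm}^{(k)}(t)} : t \in S^1,\ 0 \le k \le n\} < \epsilon\) for every \(G\). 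The goal is then to show \(W \cap L_{F,b}\R \subseteq U(n,\epsilon)\).

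For this I would take \(\gamma = \sum_G \lambda_G\gamma_G \in W\) with \(\gamma \in L_{F,b}\R\); the index set \(\{G : \lambda_G \neq 0\}\) is finite, and if it is empty then \(\gamma = 0 \in U(n,\epsilon)\), so I may assume it is not. The one point requiring comment is the identity \(\gamma_{\pm}^{(k)}(t) = \sum_G \lambda_G \gamma_{G,\pm}^{(k)}(t)\), valid for every \(t \in S^1\) and \(0 \le k \le n\): off the finitely many breaks of the \(\gamma_G\) it is just termwise differentiation of a finite sum, while at a break it follows by passing to one\hyp{}sided limits, each of which exists for each summand by lemma~\ref{lem:limits} and hence adds. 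Writing \(c_G \coloneqq \sup\{\abs{\gamma_{G,\pm}^{(k)}(t)} : t \in S^1,\ 0 \le k \le n\} < \epsilon\) for the finitely many \(G\) with \(\lambda_G \neq 0\), and \(c \coloneqq \max_G c_G < \epsilon\), the triangle inequality gives \(\abs{\gamma_{\pm}^{(k)}(t)} \le \sum_G \abs{\lambda_G}\,c_G \le c \sum_G \abs{\lambda_G} \le c < \epsilon\), uniformly in \(t \in S^1\) and \(0 \le k \le n\); hence \(\gamma \in U(n,\epsilon)\), which proves the claim and the proposition.

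I do not expect a real obstacle here. Once lemma~\ref{lem:limits} is available the estimate is routine; the only care needed is to compare against the strict bounds \(c_G < \epsilon\) rather than against \(\epsilon\) itself. It is worth noting, though, why this is genuinely easier than proposition~\ref{prop:inherit} for \(L_{\ps}\R\). There the seminorms controlled derivatives only on compact subsets of \(S^1 \ssetminus F\), so in a decomposition \(\gamma = \sum_G \lambda_G \gamma_G\) a break of some \(\gamma_G\) could fall precisely at a point where control of \(\gamma\) is demanded, wrecking the crude termwise bound and forcing the partition\hyp{}of\hyp{}unity argument used there. For \(L_{\psb}\R\) every derivative is controlled on all of \(S^1\) via its one\hyp{}sided limits, so no such localisation is needed, and the naive estimate above goes through.
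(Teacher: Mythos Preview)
Your proof is correct and follows essentially the same route as the paper's: choose the constant family \(n_G=n\), \(\epsilon_G=\epsilon\) to build the candidate \(0\)\enhyp{}neighbourhood in \(L_{H,b}\R\), then bound \(\gamma_{\pm}^{(k)}(t)\) termwise using the additivity of one\hyp{}sided limits (lemma~\ref{lem:limits}) and \(\sum\lvert\lambda_G\rvert\le 1\). Your handling of the strict inequality via \(c=\max_G c_G<\epsilon\) is in fact a little cleaner than the paper's, and your closing remark on why the partition\hyp{}of\hyp{}unity argument of proposition~\ref{prop:inherit} is unnecessary here is a nice observation.
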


\begin{proof}
 The inclusion
  \(L_{F,b}\R \to L_{H,b}\R\)
 is continuous by construction.
 Therefore we need to show that if \(U\) is a \(0\)\enhyp{}neighbourhood in \(L_{F,b}\R\) then there is a \(0\)\enhyp{}neighbourhood \(V\) in \(L_{H,b}\R\) such that
  \(V \cap L_{F,b}\R \subseteq U\).

 Now a \(0\)\enhyp{}neighbourhood base of \(L_{F,b}\R\) is given by the family of sets \(U(n,\epsilon)\).
 A \(0\)\enhyp{}neighbourhood base of \(L_{H,b}\R\) is given by the family of sets \(U((n_G,\epsilon_G))\) where \(G\) runs over \(\m{F}(H)\) and for each \(G\), \(n_G \in \N\) and \(\epsilon_G > 0\).
 Then:
 \begin{align*}
  U((n_G,\epsilon_G)) \coloneqq \big\{ \sum_G \lambda_G \gamma_G &%
   : \lambda_G \in \R, \gamma_G \in L_{G,b}\R, \\
   & \text{ all but finitely many zero,} \\
   & \sum \abs{\lambda_G} \le 1, \\
   &
   \sup \{\abs{\gamma^{(k)}_{G,\pm}(t)} : t \in S^1, 0 \le k \le n_G\} < \epsilon_G\big\}.
 \end{align*}

 Let \(\epsilon > 0\) and \(n \in \N\).
 We need to find a set of the above type such that if
  \(\gamma \in U((n_G, \epsilon_G))\)
 and \(\gamma \in L_{F,b}\R\)
 then
  \(\gamma \in U(n,\epsilon)\).
 For \(G \in \m{F}(H)\), let \(n_G = n\) and
  \(\epsilon_G = \epsilon\).
 Let \(\gamma \in L_{F,b}\R\)
 be such that
  \(\gamma \in U((n_G,\epsilon_G))\).
 Then
  \(\gamma = \sum_G \lambda_G \gamma_G\)
 for appropriate \(\lambda_G\) and \(\gamma_G\).

 Let \(t \in S^1\) and \(k \in \N\) with \(0 \le k \le n\).
 Now it may be the case that \(\gamma^{(k)}(t)\) and some or all of the \(\gamma_G^{(k)}(t)\) are not defined.
 However, the left and right limits of all are defined and satisfy:
 \[
  \gamma^{(k)}_+(t) = \sum_G \lambda_G \gamma^{(k)}_{G,+}(t)
 \]
 and similarly for the left limit.
 For \(G \in \m{F}(H)\), as
  \(\gamma_G \in U(n_G,\epsilon_G)\),
  \(\abs{\gamma_{G,+}^{(k)}(t)} < \epsilon\).
 Thus as
  \(\sum \abs{\lambda_G} \le 1\),
  \(\abs{\gamma^{(k)}(t)} < \epsilon\).
 As this holds for all \(t \in S^1\),
  \(\gamma \in U(n, \epsilon)\)
 as required.
\end{proof}

\begin{proposition}
 \label{prop:bounded}
 Let
  \(K \subseteq L_{H,b}\R\)
 be a bounded set.
 Then there is some \(F \in \m{F}(H)\) such that
  \(K \subseteq L_{F,b}\R\).
\end{proposition}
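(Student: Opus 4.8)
The plan is to argue by contradiction: suppose $K$ is bounded in $L_{H,b}\R$ but is not contained in any single $L_{F,b}\R$. I would first unpack what failure of containment means. For each finite $F \in \m{F}(H)$ there is some $\gamma_F \in K$ that is not piecewise-smooth with breaks confined to $F$; that is, $\gamma_F$ has a genuine break — a point where some derivative has distinct left and right limits — at some point of $H \ssetminus F$. Building on this, I want to extract an infinite sequence of loops $\gamma_1, \gamma_2, \dots$ in $K$ together with distinct points $s_1, s_2, \dots \in H$ such that $\gamma_j$ has a genuine break at $s_j$, and moreover $s_j \notin \{s_1,\dots,s_{j-1}\}\cup F_0$ for some fixed initial finite set — this is a straightforward inductive choice, at each stage letting $F$ be the union of the previously used break-sets and applying the non-containment hypothesis.

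The core of the argument is then to show that such a sequence cannot live inside a bounded set, by exhibiting a continuous seminorm (equivalently, a $0$-neighbourhood in $L_{H,b}\R$) on which the $\gamma_j$ are unbounded. Here I would use the description of the $0$-neighbourhood base of $L_{H,b}\R$ as the sets $U((n_G,\epsilon_G))$ from the proof of proposition~\ref{prop:subtop}, together with the fact (which is the whole point of boundedness in an inductive limit) that testing boundedness against these neighbourhoods amounts to choosing, for \emph{each} $G \in \m{F}(H)$ independently, a cutoff $\epsilon_G$. Concretely: for each $j$, since $\gamma_j$ has a genuine break at $s_j$, pick $F_j \in \m{F}(H)$ containing $s_j$ and all previously used break-points, so that $\gamma_j \in L_{F_j,b}\R$ and the jump of some derivative $\gamma_{j,+}^{(k_j)}(s_j) - \gamma_{j,-}^{(k_j)}(s_j)$ at $s_j$ is a nonzero number $c_j$. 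Now define a neighbourhood $U((n_G,\epsilon_G))$ by setting $n_G = k_j$ and $\epsilon_G$ tiny (say $\epsilon_G = |c_j|/(4j)$) when $G = F_j$, and $n_G = 0$, $\epsilon_G = 1$ otherwise. The claim is that $\gamma_j \notin j \cdot U((n_G,\epsilon_G))$ for each $j$: if $\gamma_j = \sum_G \lambda_G \delta_G$ were a representation with $\sum|\lambda_G| \le j$ and each $\delta_G \in U(n_G,\epsilon_G)$, then evaluating the jump at $s_j$ — which only the component $\delta_{F_j}$ (and components indexed by $G \ni s_j$) can contribute to, and for those $G$ we need $G = F_j$ by our choice making $s_j \notin F$ for the generic bound — forces $|c_j| \le j \cdot \epsilon_{F_j} = |c_j|/4$, a contradiction. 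Hence $K$ meets $j\cdot U$ for no $j$ is false... rather, $\gamma_j \notin j U$ shows $K$ is not absorbed by $U$, contradicting boundedness.

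The main obstacle, and the step needing the most care, is the bookkeeping in the previous paragraph: I must ensure that when I evaluate the jump of the $k_j$-th derivative of $\gamma_j$ at $s_j$, the only components $\delta_G$ in the representation $\gamma_j = \sum \lambda_G \delta_G$ that can have a nonzero jump there are those with $s_j \in G$ — and I must control \emph{all} such $G$, not just $G = F_j$. This is handled by refining the inductive construction so that the points $s_j$ are chosen to avoid a growing finite set, but one has to be honest that a representation of $\gamma_j$ may involve infinitely many distinct $G$'s, only finitely many with nonzero coefficient; the finiteness of the support of $(\lambda_G)$ is what saves the argument, combined with setting $\epsilon_G$ small simultaneously on the finite cofinal family of $F_i$'s with $i \le j$ rather than on $F_j$ alone. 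I would therefore actually choose the neighbourhood \emph{after} fixing $j$ but make $\epsilon_{F_i}$ small for all $i \le j$, and observe that any $G$ with $s_j \in G$ and $\lambda_G \neq 0$ must, by the genuineness of the break of $\delta_G$ forced at $s_j$ and a degree count, contribute a jump bounded by $\epsilon_G$; summing against $\sum|\lambda_G|\le j$ gives the contradiction. This is essentially the same phenomenon as in proposition~\ref{prop:subtop}, run in reverse, so the estimates are routine once the indexing is set up correctly.
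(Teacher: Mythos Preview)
Your overall strategy\emhyp{}detect the jumps at the breaks and show they cannot all be absorbed by a single \(0\)-neighbourhood\emhyp{}is sound, and is indeed the idea behind the paper's proof. But your execution has a real gap at precisely the point you flag as needing care.

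The problem is your choice of neighbourhood: you set \(n_G = k_j\), \(\epsilon_G\) small for \(G = F_j\), and \(n_G = 0\), \(\epsilon_G = 1\) ``otherwise''. Now suppose \(\gamma_j \in j\,U((n_G,\epsilon_G))\), so \(\gamma_j/j = \sum_G \lambda_G \delta_G\) with \(\sum|\lambda_G| \le 1\) and \(\delta_G \in U(n_G,\epsilon_G)\). You want to bound the \(k_j\)-th derivative jump at \(s_j\). Only summands with \(s_j \in G\) contribute, yes; but there are infinitely many such \(G\), and for all of them except \(G = F_j\) you have set \(n_G = 0\). For those \(G\) the condition \(\delta_G \in U(0,1)\) imposes \emph{no bound whatsoever} on derivatives of \(\delta_G\), so \(\delta_{G,+}^{(k_j)}(s_j) - \delta_{G,-}^{(k_j)}(s_j)\) can be as large as you like. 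Your proposed rescue\emhyp{}``the finiteness of the support of \((\lambda_G)\)'' and making \(\epsilon_{F_i}\) small for \(i \le j\)\emhyp{}does not help: the support is finite but \emph{unknown}, and the adversary is free to route the entire jump through a single \(G \ni s_j\) that is not one of your \(F_i\).

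The fix is to set \((n_G,\epsilon_G)\) for \emph{every} \(G \in \m{F}(H)\) in terms of the finite set \(\{j : s_j \in G\}\): take \(n_G = \max\{k_j : s_j \in G\}\) and \(\epsilon_G = \min\{|c_j|/(4j) : s_j \in G\}\) (with the convention \(n_G = 0\), \(\epsilon_G = 1\) if no \(s_j\) lies in \(G\)). Then for every \(G \ni s_j\) one has \(n_G \ge k_j\) and \(\epsilon_G \le |c_j|/(4j)\), and your jump estimate goes through. The paper avoids this bookkeeping entirely by packaging the jumps into a single continuous linear map \(\lambda_H \colon L_{H,b}\R \to \sum_{t \in H} \R^\N\), \(\gamma \mapsto (\gamma^{(k)}_+(t) - \gamma^{(k)}_-(t))_{k,t}\), and then invoking the standard fact that a bounded subset of a locally convex direct sum is supported on finitely many summands. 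That is the same idea as yours, but it offloads exactly the combinatorics you stumbled over onto a clean cited result.
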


\begin{proof}
 We need to show that there is some \(F \in \m{F}(H)\) such that if \(\gamma \in K\) then the breaks of \(\gamma\) lie in the set \(F\).
 Let \(F_K \subseteq S^1\) be the set of all breaks of elements of \(K\); that is,
  \(F_K = \bigcup_{\gamma \in K} F(\gamma)\)
 where \(F(\gamma)\) is the set of breaks of \(\gamma\).
 Clearly \(F_K \subseteq H\) and
  \(K \subseteq L_{F_K,b}\R\).
 Therefore we just need to show that \(F_K\) is finite.

 For \(t \in S^1\), define a map
  \(\lambda_t \colon L_{H,b}\R \to \R^\N\)
 by:
 \[
  \lambda_t(\gamma) = (\gamma'_+(t) - \gamma'_-(t), \dotsc, \gamma^{(k)}_+(t) - \gamma^{(k)}_-(t), \dotsc).
 \]
 A simple corollary of Borel's theorem, see \cite{jw} or \cite[15.4,21.5]{akpm}, shows that \(\lambda_t\) is surjective.
 The topology on \(\R^\N\) has \(0\)\enhyp{}basis the sets \(V(n,\epsilon)\) with \(n \in \N\), \(\epsilon > 0\):
 \[
  V(n,\epsilon) \coloneqq \{(a_k) : \abs{a_l} < \epsilon, 1 \le l \le n\}.
 \]
 For \(F \in \m{F}(H)\) let \(n_F = n\) and let
  \(\epsilon_F = \epsilon/2\).
 Let
  \(\gamma \in U((n_F,\epsilon_F))\).
 By standard arguments, for \(0 \le k \le n\),
  \(\abs{\gamma^{(k)}_\pm(t)} < \epsilon/2\).
 Hence
  \(\abs{\gamma^{(k)}_+(t) - \gamma^{(k)}_-(t)} < \epsilon\)
 and so
  \(\lambda_t(\gamma) \in V(n,\epsilon)\).
 Thus \(\lambda_t\) is continuous.

 Consider the space \(\sum_{t \in H} \R^\N\).
 Define a map
  \(\lambda_H \colon L_{H,b}\R \to \sum_{t \in H} \R^\N\)
 by
  \(\gamma \mapsto \sum_{t \in S^1} \lambda_t(\gamma)\).
 Now
  \(\lambda_t(\gamma) \ne 0\)
 only if \(\gamma\) has a break at \(t\) so for a given \(\gamma\), \(\lambda_H(\gamma)\) has only a finite number of non\hyp{}zero terms; hence \(\lambda_{H}\) is well\hyp{}defined.
 For a fixed \(F \in \m{F}(H)\), the induced map
  \(L_{F,b}\R \to \sum_{t \in H} \R^\N\)
 fits into the diagram:
 \[
  \xymatrix{ L_{H,b}\R \ar[r]^{\lambda_{H}} &
   \sum_{t \in H} \R^\N \\
   L_{F,b}\R \ar[r]^{\lambda_{F}} \ar[u] &
   \sum_{t \in F} \R^\N \ar[u] }
 \]
 The lower horizontal map is a finite product of \(\lambda_t\), hence is continuous.
 Therefore \(\lambda_{H}\) restricts to a continuous map
  \(L_{F,b}\R \to \sum_{t \in H} \R^\N\).
 Hence by the universal property of inductive limits, \(\lambda_{H}\) is continuous.

 Now a continuous linear map takes bounded sets to bounded sets.
 Therefore \(\lambda_{H}(K)\) is bounded.
 From \cite[II.6.3]{hs}, a bounded set in a direct sum is contained in a finite number of its factors.
 Therefore the set
  \(\{t \in H : \lambda_t(\gamma) \ne 0 \text{ for some } \gamma \in K\}\)
 is finite.
 This is precisely \(F_K\).
\end{proof}

\begin{corollary}
  \(L_{H,b}\R\) is convenient; that is, it satisfies condition~\ref{cond:cmplt}.
\end{corollary}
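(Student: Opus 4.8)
The plan is to show that $L_{H,b}\R$ is $\ci$-complete by reducing, via the two preceding propositions, to the fact that each $L_{F,b}\R$ for $F \in \m{F}(H)$ is already complete (in fact a Fréchet space). Recall that a locally convex space is $\ci$-complete precisely when every Mackey–Cauchy sequence converges, equivalently when the closed absolutely convex hull of every bounded set is complete (see \cite[2.14]{akpm}). So it suffices to take a bounded set $K \subseteq L_{H,b}\R$ and show that its closed absolutely convex hull is complete. By proposition~\ref{prop:bounded} there is a single $F \in \m{F}(H)$ with $K \subseteq L_{F,b}\R$, and the closed absolutely convex hull of $K$ computed in $L_{H,b}\R$ is contained in $L_{F,b}\R$ as well—here one uses that $L_{F,b}\R$ is a \emph{closed} subspace of $L_{H,b}\R$, which follows from proposition~\ref{prop:subtop} together with the fact that $L_{F,b}\R$, being a Fréchet space, is complete and hence closed in any Hausdorff locally convex space containing it topologically. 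By proposition~\ref{prop:subtop} the topology $L_{F,b}\R$ inherits from $L_{H,b}\R$ is its own (Fréchet) topology, so the closed absolutely convex hull of $K$ taken in $L_{H,b}\R$ agrees with the one taken in $L_{F,b}\R$, and the latter is complete because $L_{F,b}\R$ is.

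First I would verify that $L_{F,b}\R$ is indeed a Fréchet space: its topology is given by the countable family of seminorms $\gamma \mapsto \sup\{\abs{\gamma_\pm^{(k)}(t)} : t \in S^1,\ 0 \le k \le n\}$ for $n \in \N$, it is clearly Hausdorff, and completeness follows from a routine argument—a Cauchy sequence converges uniformly in every derivative on each component interval of $S^1 \ssetminus F$, and lemma~\ref{lem:limits} guarantees the uniform limit again has one-sided limits of all derivatives at the points of $F$, hence lies in $L_{F,b}\R$. (This is the only place where genuine analysis enters, and it is the standard completeness proof for spaces of functions in the $\Ci$-topology, adapted to keep track of the one-sided limits at breaks.) Then I would assemble the pieces: given bounded $K$, invoke proposition~\ref{prop:bounded} to land $K$ in some $L_{F,b}\R$; invoke proposition~\ref{prop:subtop} to identify the subspace topology with the Fréchet topology; conclude that the closed absolutely convex hull of $K$ lives in $L_{F,b}\R$ and is complete there; and therefore $L_{H,b}\R$ is $\ci$-complete, i.e. convenient, which is exactly condition~\ref{cond:cmplt}.

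The main obstacle is really bookkeeping rather than depth: one must be careful that the closed absolutely convex hull of $K$ does not ``escape'' $L_{F,b}\R$ when formed in the larger space $L_{H,b}\R$. This is where the closedness of $L_{F,b}\R$ in $L_{H,b}\R$ is essential, and that closedness is not completely formal—it rests on proposition~\ref{prop:subtop} (so that the inherited topology is the complete Fréchet one) plus the general fact that a complete subspace of a Hausdorff topological vector space is closed. Once that point is nailed down, the rest is the observation that $\ci$-completeness is a statement about bounded sets and their hulls, so a single-step reduction to the Fréchet pieces $L_{F,b}\R$—each of which is complete for the strongest of reasons—finishes the argument. No appeal to metrisability or sequential completeness of $L_{H,b}\R$ itself is needed, which is fortunate since the inductive limit is generally neither.
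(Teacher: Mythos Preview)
Your proposal is correct and follows essentially the same route as the paper: both reduce to the Fr\'echet pieces $L_{F,b}\R$ via proposition~\ref{prop:bounded} (bounded sets land in a single $L_{F,b}\R$) and proposition~\ref{prop:subtop} (the subspace topology there is the Fr\'echet one). The only cosmetic difference is that the paper phrases the conclusion as quasi\hyp{}completeness (every closed bounded set is complete), which is marginally stronger than $\ci$\hyp{}completeness and avoids your extra step of checking that $L_{F,b}\R$ is closed in $L_{H,b}\R$; but the substance of the argument is the same.
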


\begin{proof}
 We shall actually show that it is quasi\hyp{}complete, that is that all closed, bounded subsets are complete.
 This is stronger than \(\ci\)\enhyp{}completeness.

 Let \(K\) be a closed, bounded set in \(L_{H,b}\R\).
 By proposition~\ref{prop:bounded} there is some \(F \in \m{F}(H)\) such that
  \(K \subseteq L_{F,b}\R\).
 Proposition~\ref{prop:subtop} shows that the induced topology on \(L_{F,b}\R\) is the natural one, whence \(K\) is a closed and bounded subset of \(L_{F,b}\R\).

 Now \(L_{F,b}\R\) is a Fr\'echet space and thus quasi\hyp{}complete.
 Thus \(K\) is complete.
\end{proof}

We postpone the more general question as to whether or not \(L_{H,b}\R\) is complete to section~\ref{sec:furprop}.

\subsection{Smooth Maps}

A second corollary of proposition~\ref{prop:bounded} is important in examining the smooth structure.

\begin{corollary}
 Let
  \(c \colon \R \to L_{H,b}\R\)
 be a continuous curve.
 Then for each \(r > 0\) there is some \(F \in \m{F}(H)\) such that
  \(c([-r,r]) \subseteq L_{F,b}\R\).
\end{corollary}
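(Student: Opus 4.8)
The plan is to reduce this immediately to proposition~\ref{prop:bounded}. Fix $r > 0$. First I would observe that the interval $[-r,r]$ is a compact subset of \R, so since $c$ is continuous its image $c([-r,r])$ is a compact subset of $L_{H,b}\R$.

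Next I would invoke the standard fact that in any locally convex topological vector space every compact subset is bounded (a compact set is absorbed by every $0$\enhyp{}neighbourhood, since finitely many translates of any balanced $0$\enhyp{}neighbourhood cover it); this applies here because $L_{H,b}\R$ carries by construction the structure of a locally convex topological vector space, being the locally convex inductive limit of the family $\{L_{F,b}\R : F \in \m{F}(H)\}$. Hence $c([-r,r])$ is a bounded subset of $L_{H,b}\R$.

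Finally I would apply proposition~\ref{prop:bounded} to the bounded set $K \coloneqq c([-r,r])$: it produces some $F \in \m{F}(H)$ with $K \subseteq L_{F,b}\R$, which is exactly the claim.

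I do not expect any genuine obstacle: the whole content is packaged in proposition~\ref{prop:bounded}, and the only things to check beyond it are that continuous maps send compact sets to compact sets and that compact sets in a locally convex space are bounded, both of which are standard and need no computation. If anything merits a word of care, it is simply confirming that the ambient topology on $L_{H,b}\R$ really is a (Hausdorff) locally convex vector topology so that ``compact implies bounded'' is legitimate, but this is guaranteed by the construction of the inductive limit.
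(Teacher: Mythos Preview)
Your proof is correct and follows exactly the paper's own argument: the image of the compact interval under the continuous map \(c\) is compact, hence bounded in the locally convex space \(L_{H,b}\R\), and proposition~\ref{prop:bounded} then supplies the required \(F \in \m{F}(H)\). The paper's proof is a one-line version of precisely this reasoning.
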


\begin{proof}
 As \([-r,r]\) is compact and \(c\) is continuous, its image is bounded and hence contained in some \(L_{F,b}\R\).
\end{proof}

This will allow us to reduce the problem of checking condition~\ref{cond:postcomp} to \(L_{F,b}\R\).

\begin{lemma}
 Let \(U \subseteq \R^n\) and \(V \subseteq \R^m\) be open sets.
 Let \(\phi \colon U \to V\) be a smooth map.
 Then the map
  \(\phi_* \colon \gamma \mapsto \phi \circ \gamma\)
 is a smooth map
  \(L_{F,b} U \to L_{F,b}V\).
\end{lemma}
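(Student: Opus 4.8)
The plan is to reduce the statement to the known smooth calculus on Fréchet spaces, using the characterisation of smoothness from \cite{akpm}: a map between convenient vector spaces (or open subsets thereof) is smooth if and only if it takes smooth curves to smooth curves. So the first step is to fix a smooth curve $c \colon \R \to L_{F,b}U$ and show that $\phi_* \circ c = \phi \circ c$ is a smooth curve in $L_{F,b}V$. Here the key observation is that, since $F$ is \emph{fixed} and finite, the space $L_{F,b}\R^n$ is a Fréchet space (a finite product of the Fréchet spaces $L_{F,b}\R$, each of which, as noted in the completeness subsection, is Fréchet), and $L_{F,b}U$ is an open subset of it by condition~\ref{cond:smthcts} applied componentwise together with the fact that $L_{F,b}\R \to L^0\R$ is continuous, so that $L_{F,b}U = \{\gamma : \gamma(S^1) \subseteq U\}$ is the preimage of an open set.

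The second step is to identify the smooth structure concretely. On $L_{F,b}\R$ the topology is that of uniform convergence of the left and right limits of all derivatives on $S^1$; equivalently, writing $S^1 \ssetminus F$ as a finite disjoint union of open intervals $(a_i,b_i)$, it is the topology of uniform convergence on each closed interval $[a_i,b_i]$ of all derivatives of the (continuous, by lemma~\ref{lem:limits}) extension of $\gamma^{(k)}$ to the closed interval. Thus $L_{F,b}\R$ embeds as a closed subspace of the finite product $\prod_i C^\infty([a_i,b_i],\R)$ cut out by the loop-closing conditions $\gamma(a_i^+) = \gamma(b_{i-1}^-)$ (finitely many continuous linear constraints). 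This presents $\phi_*$ on each interval as post-composition $C^\infty([a_i,b_i],U_i) \to C^\infty([a_i,b_i],V)$ with the smooth map $\phi$, where $U_i$ is the appropriate open subset.

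The third step is then to invoke the standard fact that post-composition with a smooth map of manifolds is a smooth operation on spaces of smooth maps from a compact manifold with boundary; this is \cite[ch.~III]{akpm} (the exponential law / Cartesian closedness of the smooth calculus, specialised to finite-dimensional compact source). Concretely, if $c \colon \R \to L_{F,b}U$ is smooth then the associated map $\hat c \colon \R \times S^1 \to U$ (defined off $F$, with all partials extending continuously across $F$) is smooth in the appropriate $c^\infty$ sense, hence $\phi \circ \hat c \colon \R \times S^1 \to V$ is smooth, and transposing back gives that $\phi_* \circ c$ is a smooth curve in $L_{F,b}V$. Since the piecewise structure is governed by the fixed finite set $F$, none of the pathologies of the inductive limit $L_{H,b}\R$ intervene here — everything happens inside a single Fréchet stratum.

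The main obstacle I expect is bookkeeping at the breaks: making rigorous the passage between ``smooth off $F$ with matching one-sided limits of all derivatives'' and ``genuinely smooth on the closed pieces $[a_i,b_i]$'', and checking that this identification is compatible with the exponential law (i.e.\ that a curve into $L_{F,b}\R$ is smooth precisely when its uncurried version is jointly smooth on each $\R \times [a_i,b_i]$). Once that translation is in hand, the smoothness of $\phi_*$ is exactly the classical statement for $C^\infty$ maps from a compact manifold with boundary, applied piece by piece, and the finitely many linear closing conditions are preserved automatically because $\phi_*$ acts pointwise. I would also remark that the same argument, together with the preceding corollary reducing any continuous curve in $L_{H,b}\R$ to one landing in some $L_{F,b}\R$, will later upgrade this to smoothness of $\phi_* \colon L_{H,b}U \to L_{H,b}V$, which is condition~\ref{cond:postcomp}.
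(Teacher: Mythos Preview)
Your proposal is correct and follows the same overall strategy as the paper: reduce to smooth curves, identify \(L_{F,b}\R^n\) with a closed finite\hyp{}codimension subspace of a finite product of \(C^\infty\) spaces on the closed arcs determined by \(F\), and then invoke the exponential law so that smoothness of \(\phi_*\) becomes the classical statement that post\hyp{}composition by a smooth map is smooth.

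The paper's implementation differs in two technical details that you bypass. First, rather than appealing directly to Cartesian closedness for sources with boundary, the paper uses Seeley's extension theorem to realise each \(C^\infty([0,1],\R^n)\) as a direct summand of \(C^\infty(\R,\R^n)\), so that the exponential law reduces to the boundaryless case \(\R^2 \to \R^n\). Second, the paper does not invoke smoothness of post\hyp{}composition into an open subset \(V\) as a black box: instead it restricts the curve to a bounded interval, observes that its image lands in some \(L_{F,b}U_n\) with \(\overline{U_n} \subseteq U\), and then uses a bump function to extend \(\phi\) to a smooth \(\tilde{\phi} \colon \R^m \to \R^n\), so that the final step is literally \((\tilde{\phi}_* c)^\lor = \tilde{\phi} \circ c^\lor\) between Euclidean spaces. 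Your route is cleaner if one is willing to quote the general Cartesian closedness from \cite{akpm}; the paper's route is more self\hyp{}contained and makes explicit exactly which ingredients from the convenient calculus are being used.
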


\begin{proof}
 Firstly, it is obvious that \(\phi \circ \gamma\) does indeed lie in \(L_{F,b}V\).
 Thus the map is defined and we simply need to check that it is smooth.
 We are using the convenient calculus of \cite{akpm} so we need to check that \(\phi_*\) takes smooth curves to smooth curves.
 That is, we need to show that if
  \(c \colon \R \to L_{F,b}U\)
 is smooth then so is
  \(\phi_* c \colon \R \to L_{F,b}V\).
 Smoothness is a local concept in all its forms so it is sufficient to show that if
  \(c \colon \R \to L_{F,b}U\)
 is smooth then for every \emph{bounded} open interval \(I \subseteq \R\) then the restriction of \(\phi_* c\) to \(I\) is smooth.
 The reason for doing this is that it allows us to modify \(\phi\) slightly.
 Choose a sequence \(\{U_n\}\) of open subsets of \(U\) with the property that
  \(\overline{U_n} \subseteq U_{n+1}\)
 and \(U = \bigcup U_n\); as the circle is compact we see that
  \(L_{F,b}U = \bigcup L_{F,b} U_n\).
 Then as \(I \subseteq \R\) has compact closure, \(c\) maps \(I\) into \(L_{F,b}U_n\) for some \(n\).
 Applying the locality of smoothness again, it is therefore sufficient to check that
  \(\phi_* \colon L_{F,b} U_n \to L_{F,b}V\)
 is smooth for each \(n\).
 Now on the right we have that \(L_{F,b} V\) is an open subset of \(L_{F,b} \R^n\) so a map into \(L_{F,b} V\) is smooth if and only if it is smooth into \(L_{F,b} \R^n\).
 On the left, since
  \(\overline{U_n} \subseteq U\)
 we can use a bump function to find a map
  \(\tilde{\phi} \colon \R^m \to \R^n\)
 which agrees with \(\phi\) on \(U_n\).
 On \(I\),
  \(\phi_* c = \tilde{\phi}_* c\)
 and therefore it is sufficient to show that \(\tilde{\phi}_* c\) is a smooth map \(I \to L_{F,b} \R^n\).

 To do this we need to characterise smooth curves in \(L_{F,b}\R^n\).
 Now \(S^1 \ssetminus F\) is diffeomorphic to a disjoint union of unit intervals.
 The obvious map:
 \[
  L_{F,b} \R^n \to \prod_{i=1}^{k} \Ci_b((0,1)), \R^n)
 \]
 is injective and a homeomorphism onto its image.
 The image is:
 \[
  \{(f_1, \dotsc, f_k) : f_{i,-}(1) = f_{i+1,+}(0), f_{k,-}(1) = f_{1,+}(0)\},
 \]
 which has finite codimension and is thus a direct summand.
 Now since the left and right limits exist,
  \(\Ci_b((0,1),\R^n) = \Ci([0,1],\R^n)\).
 By Seeley's theorem, \cite{rs}, this is a direct summand of \(\Ci(\R,\R^n)\).
 Therefore \(L_{F,b} \R^n\) is a direct summand of
  \(\prod_{i=0}^{n-1} \Ci(\R, \R^n)\).

 Following all of this through, a curve in \(L_{F,b} \R^n\) is smooth if and only if it is smooth into
  \(\prod_{i=0}^{n-1} \Ci(\R,\R^n)\)
 and a map into a finite product is smooth if and only if it is smooth into each factor.
 The exponential law of \cite[3.2]{akpm} says that a curve in \(\Ci(\R,\R^n)\) is smooth if and only if its adjoint, which is a map \(\R^2 \to \R^n\), is smooth.

 All of this jiggery\hyp{}pokery has been to do with the domains.
 We have not touched the codomains.
 It is easy to see, therefore, that if
  \(c \colon \R \to L_{F,b}\R^m\)
 is and
  \(c^\lor \colon \R^2 \to \R^n\)
 the result of the above mechanics then
  \((\tilde{\phi}_* c)^\lor = \tilde{\phi} \circ c^\lor\).
 Hence \(\tilde{\phi}_*\) takes smooth curves to smooth curves and is thus smooth.
\end{proof}

\begin{corollary}
 The map
  \(\phi_* \colon L_{H,b}U \to L_{H,b}V\)
 is smooth.
 Hence \(L_{H,b} \R\) satisfies condition~\ref{cond:postcomp}.
\end{corollary}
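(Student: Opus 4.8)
The plan is to deduce smoothness of $\phi_*$ on $L_{H,b}$ from the preceding lemma, which handles the pieces $L_{F,b}$, using the convenient-calculus criterion that a map is smooth precisely when it carries smooth curves to smooth curves. First I would observe that $\phi_*$ is well-defined on $L_{H,b}U$: by the chain rule together with lemma~\ref{lem:limits}, post-composing a piecewise\hyp{}smooth bounded loop with values in $U$ by the smooth map $\phi$ yields a piecewise\hyp{}smooth bounded loop with values in $V$ and with no new breaks.

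Now let $c \colon \R \to L_{H,b}U$ be a smooth curve; the task is to show that $\phi_* c = \phi \circ c$ is a smooth curve into $L_{H,b}V$. Since smoothness of a curve is local, it suffices to prove that the restriction of $\phi_* c$ to each bounded open interval $I \subseteq \R$ is smooth. Fix such an $I$. The set $c(\overline{I})$ is compact, hence bounded in $L_{H,b}\R^n$, so by the corollary to proposition~\ref{prop:bounded} it is contained in $L_{F,b}\R^n$ for some \(F \in \m{F}(H)\); as it also lies in $L_{H,b}U$ we get $c(\overline{I}) \subseteq L_{F,b}\R^n \cap L_{H,b}U = L_{F,b}U$, the latter being a \(\ci\)\enhyp{}open subset of $L_{F,b}\R^n$. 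I then want to promote the restriction $c|_I$ to a smooth curve into $L_{F,b}U$. By proposition~\ref{prop:subtop} the topology $L_{F,b}\R^n$ inherits from $L_{H,b}\R^n$ is its native Fr\'echet topology, so by the Hahn--Banach extension theorem every continuous linear functional on $L_{F,b}\R^n$ is the restriction of one on $L_{H,b}\R^n$; composing such an extension with the smooth curve $c$ shows that $c|_I$ is scalarwise smooth into $L_{F,b}\R^n$, and since a Fr\'echet space is convenient, $c|_I$ is a smooth curve into $L_{F,b}\R^n$, hence into the \(\ci\)\enhyp{}open subset $L_{F,b}U$ (see \cite{akpm}).

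At this point the preceding lemma applies and gives that $\phi_* \colon L_{F,b}U \to L_{F,b}V$ is smooth, so $\phi_* \circ c|_I \colon I \to L_{F,b}V$ is smooth; post-composing with the continuous, hence smooth, linear inclusion $L_{F,b}V \hookrightarrow L_{H,b}V$ shows that $(\phi_* c)|_I$ is smooth into $L_{H,b}V$. As $I$ was an arbitrary bounded open interval, $\phi_* c$ is smooth, so $\phi_*$ carries smooth curves to smooth curves and is therefore smooth in the sense of \cite{akpm}; this is precisely condition~\ref{cond:postcomp}.

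The step I expect to demand the most care is the promotion of $c|_I$ from a curve into $L_{H,b}U$ to a genuine smooth curve into the smaller space $L_{F,b}U$. This is exactly where proposition~\ref{prop:subtop} is indispensable: the agreement of the two topologies is what makes the Hahn--Banach argument\emhyp{}and hence the comparison of the convenient structures of $L_{F,b}\R^n$ and $L_{H,b}\R^n$\emhyp{}go through. The remaining steps are formal: the reduction to bounded intervals rests on the locality of smoothness and the corollary to proposition~\ref{prop:bounded}, while the transfer back up to $L_{H,b}V$ rests on the continuity of the inductive\hyp{}limit inclusions.
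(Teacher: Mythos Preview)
Your proof is correct and follows essentially the same route as the paper's: reduce to bounded intervals by locality of smoothness, use the boundedness result to land in some \(L_{F,b}\), invoke the preceding lemma there, and push back up via the inductive\hyp{}limit inclusion. The paper's version is terser and simply asserts that the restriction \(c_r\) is a smooth curve into \(L_{F,b}U\); your Hahn\enhyp{}Banach plus scalarwise\hyp{}smoothness argument is a perfectly valid way to justify that step explicitly.
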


\begin{proof}
 We need to show that if
  \(c \colon \R \to L_{H,b}U\)
 is smooth then
  \(\phi_* c \colon \R \to L_{H,b}V\)
 is smooth.
 As smoothness is a local concept, this holds if it is true for \(c\) restricted to \((-r,r)\) for all \(r > 0\).
 Now for \(r > 0\), let \(c_r\) be this restriction, then \(c_r\) is a smooth curve in \(L_{F,b} U\) for some \(F \in \m{F}(H)\).
 By the above, \(\phi_* c_r\) is a smooth curve in \(L_{F,b}V\), whence also smooth in \(L_{H,b}V\).
 Hence \(\phi_*\) is smooth.
\end{proof}

\subsection{Further Properties}
\label{sec:furprop}

We now consider some of the other properties for \(L_{H,b} \R\).
For this consideration we need to consider two cases: where \(H\) is countable and where it is not.
Our primary examples are \(H = \Q / \Z\) and \(H = S^1\).

In the following theorem, each space under consideration has two interesting topologies: its original locally convex topology and its \(\ci\)\enhyp{}topology.
Some of the properties that we consider are properties of locally convex topological vector spaces and these clearly only make sense for the former topology.
Others are clearly in the realm of the smooth structure.
Whilst they may make sense for both topologies they are most interesting for the \(\ci\)\enhyp{}topology.
Still other properties are relevant for both topologies.
Some standard topological properties, in particular separation properties, can be given a smooth twist.
For example, one can alter the usual topological property of complete regularity to that of smooth regularity where the separating function is required to be smooth.
In all the cases that we consider, the smooth version is an obvious alteration to the standard one so we shall not list them all.

\begin{theorem}
 Let \(H \subseteq S^1\).
 \begin{enumerate}
 \item
 Suppose that \(H\) is countably infinite.
 \begin{enumerate}
  \item As a locally convex topological vector space, \(L_{H,b} \R\) is: complete, nuclear, reflexive, barrelled, and bornological.
Its topology is normal, separable, Lindel\"of, and paracompact, but not metrisable.

  \item The \(\ci\)\enhyp{}topology on \(L_{H,b} \R\) is the inductive topology \emph{as a topological space} of the family
   \(\{L_{F,b}\R : F \in \m{F}(H)\}\).
  It is separable, Lindel\"of, smoothly Hausdorff, smoothly paracompact, and smoothly normal, but not metrisable.
 \end{enumerate}

 \item
  Suppose that \(H\) is uncountable.
  \begin{enumerate}
   \item As a locally convex topological vector space, \(L_{H,b} \R\) is: complete, reflexive, barrelled, bornological, but not nuclear.
Its topology is not separable, metrisable, or Lindel\"of.
We do not know whether or not it is paracompact or normal.

  \item The \(\ci\)\enhyp{}topology on \(L_{H,b} \R\) is the inductive topology \emph{as a topological space} of the family
   \(\{L_{F,b} \R : F \in \m{F}(H)\}\).
  It is smoothly Hausdorff.
  It is not separable, Lindel\"of, or metrisable.
  We do not know whether or not it is regular, paracompact, normal, smoothly regular, smoothly paracompact, or smoothly normal.
  \noproof
  \end{enumerate}
 \end{enumerate}
\end{theorem}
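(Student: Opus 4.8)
The plan is to analyze the many assertions in this theorem by splitting them along three axes: the locally convex vector space properties, the purely topological properties (both for the locally convex topology and for the $c^\infty$-topology), and the smooth separation properties; and for each axis to exploit the two structural results already in hand, namely proposition~\ref{prop:subtop} (each $L_{F,b}\R$ sits inside $L_{H,b}\R$ with its natural Fr\'echet topology) and proposition~\ref{prop:bounded} (bounded sets live in a single $L_{F,b}\R$). Together these say that $L_{H,b}\R$ is a \emph{regular} inductive limit of a countable or uncountable directed system of Fr\'echet spaces, indexed by $\m{F}(H)$. The size of the index set is exactly what governs whether $H$ is countable or not: if $H$ is countable then $\m{F}(H)$ is countable and one gets a countable regular (LF)-type limit, and almost all classical permanence theorems for such limits apply off the shelf.

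For the locally convex properties I would argue as follows. Completeness: a regular inductive limit of complete spaces is complete (use proposition~\ref{prop:bounded} to see every Cauchy/bounded net is eventually trapped in one Fr\'echet step, where it converges, and proposition~\ref{prop:subtop} to see the limit there is the limit in $L_{H,b}\R$); this handles both the countable and uncountable cases. Barrelled and bornological: inductive limits of barrelled (resp. bornological) spaces are always barrelled (resp. bornological) — this is standard (\cite[II.7]{hs}) and needs no regularity or countability. Reflexivity: each $L_{F,b}\R \cong \Ci([0,1],\R^n)$-type space is reflexive (it is a closed subspace/direct summand of a countable product of $\Ci(\R)$'s, all of which are reflexive Fr\'echet spaces); a regular inductive limit of reflexive spaces, being barrelled with the bounded sets coming from the reflexive steps, is semi-reflexive, hence reflexive. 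Nuclearity: here the countable case and the uncountable case genuinely diverge. A \emph{countable} inductive limit of nuclear Fr\'echet spaces is nuclear (\cite[III.7.4]{hs}); an uncountable one need not be, and in fact I would show it is \emph{not} by exhibiting a non-nuclear subspace or quotient — the cleanest route is that nuclear spaces are separable (have a countable... no: nuclear spaces of uncountable dimension can fail to be separable, but nuclear (F)-spaces are separable; for a nuclear space any bounded set is separable), so combined with the non-separability established below one gets a contradiction, or more robustly: a nuclear space cannot contain an uncountable ``discrete'' biorthogonal-type system of the size forced by an uncountable $H$. Metrisability fails in every case because a metrisable inductive limit of a strictly increasing sequence (or net) of Fr\'echet spaces would be Fr\'echet, forcing it to equal one of the steps $L_{F,b}\R$ — impossible since $H$ is infinite, so there are always loops with a break outside any given finite $F$.

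For the topological properties (separability, Lindel\"of, paracompact, normal) I would treat the two topologies together where possible. When $H$ is countable: $L_{H,b}\R$ is a countable union of separable metrisable (Fr\'echet) subspaces, each sitting topologically as a subspace, so it is separable and hereditarily Lindel\"of; an $LF$-space that is Lindel\"of is paracompact and normal (regular + Lindel\"of $\Rightarrow$ paracompact $\Rightarrow$ normal). For the $c^\infty$-topology: I would first show it coincides with the topological-space inductive limit of the $L_{F,b}\R$ — this follows because a regular inductive limit of Fr\'echet spaces has its $c^\infty$-topology equal to the final topology with respect to the $L_{F,b}\R$ (use proposition~\ref{prop:bounded}: every smooth curve, having compact-interval images bounded, factors through some $L_{F,b}\R$, and on a Fr\'echet space the $c^\infty$-topology is the given one); then separability and Lindel\"of-ness descend from the countable cover, and smooth paracompactness/normality follow because each $L_{F,b}\R$, being a direct summand of a countable power of $\Ci(\R)$, is smoothly paracompact (Seeley's theorem plus the smooth paracompactness of $\Ci(\R)$ from \cite{akpm}), and a countable topological sum of smoothly paracompact spaces inside a Hausdorff smooth space is smoothly paracompact. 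Smooth Hausdorffness in all four cases is immediate from condition~\ref{cond:smthcts}: the continuous injection into $L^0\R$ separates points by smooth (indeed linear continuous, hence $c^\infty$-smooth) functionals. When $H$ is uncountable, non-separability and failure of Lindel\"of come from exhibiting an uncountable discrete closed subset — e.g. for each $t\in H$ a loop $\gamma_t$ with a break only at $t$ of a fixed size, which by the jump functionals $\lambda_t$ of proposition~\ref{prop:bounded}'s proof are uniformly separated — and the remaining properties (regular, paracompact, normal, and their smooth versions) are honestly left open, as the statement says.

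The main obstacle, and the reason for the hedged ``we do not know'' clauses, is precisely the uncountable case: an uncountable regular inductive limit of Fr\'echet spaces need not be an $LF$-space in the usual sense, the classical metrisation/paracompactness machinery does not apply, and the $c^\infty$-topology on such a beast is poorly understood. So the genuinely nontrivial positive content I must supply is (i) the verification that even in the uncountable case the regularity provided by propositions~\ref{prop:subtop} and~\ref{prop:bounded} is enough for completeness, barrelledness, bornologicity and reflexivity, and that these proofs do not secretly use countability; (ii) the non-nuclearity argument in the uncountable case, which requires producing a concrete obstruction rather than quoting a permanence theorem; and (iii) the identification of the $c^\infty$-topology with the topological inductive limit, which is the linchpin for every smooth-topological assertion and must be proved directly from the definition of smooth curves together with proposition~\ref{prop:bounded}. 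The countable case, by contrast, I expect to be essentially bookkeeping with standard (LF)-space theory.
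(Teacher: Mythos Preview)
Your overall architecture is sound and matches the paper's for most of the assertions, but there are two genuine gaps.

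\textbf{Completeness in the uncountable case.} Your argument ``every Cauchy net is eventually trapped in one Fr\'echet step'' does not work. Proposition~\ref{prop:bounded} controls bounded \emph{sets}, but a Cauchy net in a locally convex space need not be bounded (the tail past some index is contained in a translate of any given neighbourhood, but the index depends on the neighbourhood, and there is no single tail that is absorbed by every neighbourhood). Regularity of the inductive limit gives you quasi\hyp{}completeness, which the paper already has, but not completeness; indeed it is well known that uncountable strict inductive limits of complete spaces can fail to be complete. The paper's route is entirely different: it constructs the continuous linear surjection \(\lambda_H \colon L_{H,b}\R \to \sum_{t\in H}\R^\N\) with kernel \(L\R\), shows it is open, and then proves a general lemma that in a short exact sequence \(X\hookrightarrow Y\twoheadrightarrow Z\) of locally convex spaces with \(X\) and \(Z\) complete, \(Y\) is complete (a Hahn\enhyp{}Banach argument in the completion of \(Y\)). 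You will need either this or something of comparable strength; the bounded\hyp{}sets argument cannot be repaired.

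\textbf{Smooth regularity and paracompactness for countable \(H\).} Your claim that ``a countable topological sum of smoothly paracompact spaces inside a Hausdorff smooth space is smoothly paracompact'' is not a theorem, and the pieces here are not a topological sum. The paper follows the route Lindel\"of + \(\Ci\)\enhyp{}regular \(\Rightarrow\) \(\Ci\)\enhyp{}paracompact (\cite[III.16.10]{akpm}), but the standard \(\Ci\)\enhyp{}regularity lemma for strict inductive limits (\cite[III.16.6]{akpm}) requires the smooth extension property for each inclusion \(L_{F_n,b}\R\hookrightarrow L_{F_{n+1},b}\R\), which the paper explicitly notes \emph{fails} here (it is the Borel\hyp{}type obstruction of \cite[V.21.5ff]{akpm}). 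The fix is a modified lemma: one replaces ``\(\Ci\)\enhyp{}normal'' at each step by ``\(\m{S}_F\)\enhyp{}normal'', where \(\m{S}_F\) is the algebra of smooth functions on \(L_{F,b}\R\) that \emph{do} extend to \(L_{H,b}\R\). To see that \(L_{F,b}\R\) is \(\m{S}_F\)\enhyp{}regular one uses that \(L_{H,b}\R\) is nuclear (so its seminorms are Hilbertian, with smooth squares) --- and this is precisely why the argument does not go through for uncountable \(H\). Your proposal does not engage with this difficulty at all.

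The remaining items (barrelled, bornological, reflexive, the identification of the \(\ci\)\enhyp{}topology, the negative results via the quotient onto \(\sum_H\R\)) are handled essentially as you outline, though for non\hyp{}nuclearity and non\hyp{}metrisability the paper argues cleanly via the splitting quotient \(L_{H,b}\R\to\sum_H\R\) rather than by the Baire\hyp{}type contradiction you sketch.
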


For \(H\) uncountable we leave open several questions.
The techniques we use to answer these questions for \(H\) countable do not extend to the uncountable case.
In light of the fact that, as we shall see, there is not a significant advantage to taking \(H = S^1\) over taking \(H = \Q / \Z\), we leave these questions to the future, though we shall make some remarks on them at the end of this section.

\subsubsection{The Locally Convex Topology}

Let us start by considering the locally convex topology on \(L_{H,b} \R\).

\begin{proposition}
 For \(H \subseteq S^1\), \(L_{H,b}\R\) is reflexive, barrelled, and bornological.
\end{proposition}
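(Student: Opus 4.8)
The plan is to read off all three properties from the presentation of $L_{H,b}\R$ as a Hausdorff locally convex inductive limit of the Fr\'echet spaces $L_{F,b}\R$, $F \in \m{F}(H)$, using the two structural results Propositions~\ref{prop:subtop} and~\ref{prop:bounded} that have just been established. Hausdorffness is automatic here, since $L_{H,b}\R$ injects continuously into $L^0\R$. Note in advance that none of the argument uses whether $H$ is countable, which is exactly why the statement can be made for all $H \subseteq S^1$ at once.

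Barrelledness and bornologicity are formal. Each $L_{F,b}\R$ is a Fr\'echet space, hence a Baire space and so barrelled, and is metrisable and so bornological. An arbitrary locally convex inductive limit of barrelled (resp.\ bornological) spaces is again barrelled (resp.\ bornological) --- being a quotient of the locally convex direct sum of the steps --- see \cite{hs} and \cite{hj}. Hence $L_{H,b}\R$ is barrelled and bornological, and it only remains to treat reflexivity.

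For reflexivity I would invoke the standard criterion that a barrelled space is reflexive as soon as it is semi\hyp{}reflexive, i.e.\ as soon as every bounded subset is relatively weakly compact \cite{hs}. So let $B \subseteq L_{H,b}\R$ be bounded. By Proposition~\ref{prop:bounded} there is $F \in \m{F}(H)$ with $B \subseteq L_{F,b}\R$, and by Proposition~\ref{prop:subtop} the set $B$ is bounded in $L_{F,b}\R$ for its own Fr\'echet topology. Now $L_{F,b}\R$ is a reflexive Fr\'echet space: by the analysis in the proof above that $\phi_*$ is smooth it is a closed --- indeed complemented --- subspace of a finite product of copies of $\Ci(\R,\R)$, and $\Ci(\R,\R)$ is a nuclear, hence reflexive, Fr\'echet space, reflexivity passing to finite products and to closed subspaces. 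Being reflexive, $L_{F,b}\R$ is semi\hyp{}reflexive, so the closure $\overline{B}$ of $B$ inside $L_{F,b}\R$ is weakly compact there. The inclusion $L_{F,b}\R \to L_{H,b}\R$ is continuous and linear, hence continuous for the respective weak topologies, so $\overline{B}$ is a weakly compact subset of $L_{H,b}\R$ containing $B$. Thus every bounded subset of $L_{H,b}\R$ is relatively weakly compact; $L_{H,b}\R$ is semi\hyp{}reflexive, and being barrelled it is reflexive.

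The only real content is the reflexivity step, and the only point in it that needs care is the bookkeeping with Propositions~\ref{prop:bounded} and~\ref{prop:subtop}: these are precisely what let one squeeze an arbitrary bounded set into a single Fr\'echet step carrying the correct topology, after which the Fr\'echet steps are reflexive for cheap and compactness transfers up the inductive limit automatically. The barrelled and bornological halves are nothing more than quoting permanence properties of inductive limits, so I expect no obstacle there.
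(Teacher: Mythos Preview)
Your proof is correct and follows essentially the same route as the paper. For barrelledness and bornologicity the paper likewise just quotes the permanence of these properties under inductive limits, and for reflexivity the paper invokes \cite[11.4.5(e)]{hj} on the strength of Proposition~\ref{prop:bounded}; your argument via semi\hyp{}reflexivity (bounded sets pushed into a reflexive Fr\'echet step using Propositions~\ref{prop:bounded} and~\ref{prop:subtop}, then weak compactness pulled back) is precisely the content of that citation, unpacked by hand.
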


\begin{proof}
 Both barrelled and bornological are preserved by inductive limits and  each \(L_{F,b} \R\) is both barrelled and bornological; see  \cite[II.7,II.8]{hs}.
 For reflexivity, we use proposition~\ref{prop:bounded}.
 Each \(L_{F,b} \R\) is a nuclear Fr\'echet space, whence reflexive,  so using proposition~\ref{prop:bounded} we can apply  \cite[11.4.5(e)]{hj} to deduce that \(L_{H,b}\R\) is also reflexive.
\end{proof}

For \(H\) countable much of the rest of our analysis relies on the following result.

\begin{proposition}
 Let \(H \subseteq S^1\) be countable.
 Then \(L_{H,b}\R\) is the strict inductive limit of a sequence \(\{L_{F_n,b}\R\}\).
\end{proposition}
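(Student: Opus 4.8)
The plan is to exhibit an explicit exhaustion of the countable set $H$ by a nested sequence of finite sets and then invoke the earlier propositions to recognise the associated inductive system as a \emph{strict} one. First I would enumerate $H = \{t_1, t_2, t_3, \dotsc\}$ (if $H$ is finite the statement is trivial, since then $L_{H,b}\R = L_{F,b}\R$ for $F = H$ and the constant sequence works); a countably infinite $H$ admits such an enumeration. Set $F_n \coloneqq \{t_1, \dotsc, t_n\} \in \m{F}(H)$. Then $F_n \subseteq F_{n+1}$ and $\bigcup_n F_n = H$, and the sequence $\{F_n\}$ is cofinal in $\m{F}(H)$ directed by inclusion: any finite subset of $H$ is contained in $F_n$ for $n$ large enough. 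Consequently the inductive limit of the subsystem $\{L_{F_n,b}\R\}$ coincides, as a locally convex topological vector space, with the inductive limit over the whole directed family $\m{F}(H)$, which is by definition the topology on $L_{H,b}\R$. So $L_{H,b}\R = \varinjlim_n L_{F_n,b}\R$ as locally convex spaces.

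It remains to check that this countable inductive limit is \emph{strict}, i.e.\ that for each $n$ the topology $L_{F_n,b}\R$ carries as a subspace of $L_{F_{n+1},b}\R$ agrees with its own intrinsic topology. But this is exactly what proposition~\ref{prop:subtop} gives us, applied with $H$ replaced by $F_{n+1}$: since $F_n \in \m{F}(F_{n+1})$, the topology on $L_{F_n,b}\R$ coincides with the one inherited from its inclusion in $L_{F_{n+1},b}\R$. (Equivalently one may apply proposition~\ref{prop:subtop} with the full $H$ to see that $L_{F_n,b}\R$ inherits its natural topology from $L_{H,b}\R$, and then note that the intermediate inclusions $L_{F_n,b}\R \to L_{F_{n+1},b}\R \to L_{H,b}\R$ force the same conclusion at the intermediate stage.) Thus every inclusion $L_{F_n,b}\R \hookrightarrow L_{F_{n+1},b}\R$ is a topological embedding, which is precisely the definition of a strict inductive limit of a sequence.

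The only genuinely substantive input is proposition~\ref{prop:subtop}, which has already been established; everything else is the bookkeeping of cofinality in the directed set $\m{F}(H)$ and the observation that passing to a cofinal subfamily does not change the inductive-limit topology. I expect no real obstacle here — the one point to state carefully is that the $F_n$ are genuinely cofinal in $\m{F}(H)$, so that the limit over the sequence is the same object as the limit over the full family, rather than merely a space mapping continuously to it. Once that is in place, strictness is immediate from proposition~\ref{prop:subtop} and the proof is complete.
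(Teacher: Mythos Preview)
Your proposal is correct and follows essentially the same approach as the paper: enumerate $H$, set $F_n = \{t_1,\dotsc,t_n\}$ to obtain a cofinal increasing sequence in $\m{F}(H)$, and invoke proposition~\ref{prop:subtop} to verify that the resulting countable inductive limit is strict. The paper's proof is terser but the argument is identical in substance.
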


\begin{proof}
 Proposition~\ref{prop:subtop} shows that if \(F \subseteq G\) then the topology on \(L_{F,b}\R\) is the same as that inherited from \(L_{G,b}\R\).
 This is what is meant by the word ``strict'' in the definition of an inductive limit.
 In any inductive limit, we can replace the family by a cofinal subfamily; therefore to complete this proof we need to exhibit an increasing sequence in \(\m{F}(H)\) which is cofinal.
 Enumerate \(H\) and let
  \(F_n \coloneqq \{h_1, \dotsc, h_n\}\).
 It is easy to see that this is increasing and every finite subset of \(H\) is contained in one of its terms.
 Hence this will do for the sequence.
\end{proof}

\begin{corollary}
 If \(H \subseteq S^1\) is countable, then \(L_{H,b}\R\) is complete, nuclear, separable, Lindel\"of, paracompact, and normal.
\end{corollary}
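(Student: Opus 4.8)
The plan is to derive each listed property of $L_{H,b}\R$ (for $H$ countable) by invoking the preceding proposition, which expresses $L_{H,b}\R$ as a \emph{strict} inductive limit of the sequence $\{L_{F_n,b}\R\}$, and then appealing to standard permanence results for strict (LB)- or (LF)-type limits. The key structural facts to use are: each $L_{F_n,b}\R$ is a nuclear Fr\'echet space (noted in the proof of reflexivity above, and following from Seeley's theorem identifying it with a direct summand of a finite product of copies of $\Ci(\R,\R^n)$, which is nuclear Fr\'echet); the connecting maps are closed embeddings of topological subspaces (this is exactly the content of proposition~\ref{prop:subtop}); and the index is a \emph{sequence}, so that $L_{H,b}\R$ is a strict (LF)-space.

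First I would dispose of completeness: a strict inductive limit of a sequence of complete locally convex spaces is complete, a classical theorem (see \cite[II.6.6]{hs} or the corresponding statement in \cite[ch.~6]{hj}); since each $L_{F_n,b}\R$ is Fr\'echet, hence complete, we get that $L_{H,b}\R$ is complete. Next, nuclearity: a countable inductive limit of nuclear spaces is nuclear (\cite[III.7.4]{hs} or \cite{hj}), and each $L_{F_n,b}\R$ is nuclear, so $L_{H,b}\R$ is nuclear. For separability, each $L_{F_n,b}\R$ is a separable Fr\'echet space (it is a direct summand of a finite product of copies of $\Ci(\R,\R^n)$, which is separable), and a countable union of separable subspaces, carrying the inductive-limit topology, is separable — a countable union of countable dense subsets of the pieces is dense. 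Lindel\"of then follows either from separability together with the fact that a strict (LF)-space is, in particular, metrisable on bounded sets and well-behaved enough, or more directly: each $L_{F_n,b}\R$ is Lindel\"of (second countable, being separable and metrisable) and a countable union of Lindel\"of subspaces is Lindel\"of. Paracompactness and normality then come for free from the general fact that every Lindel\"of regular space is paracompact and normal (and $L_{H,b}\R$, being a Hausdorff topological vector space, is regular); alternatively one can cite that strict (LF)-spaces are known to be paracompact.

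The main obstacle I anticipate is getting the separability/Lindel\"of step phrased cleanly, because the inductive-limit topology is finer than the union-of-subspace-topologies in general; the point that saves us is that the inductive limit here is \emph{strict} and countable, so by proposition~\ref{prop:subtop} the topology on each $L_{F_n,b}\R$ as a subspace of $L_{H,b}\R$ coincides with its own Fr\'echet topology, and one must still check that a countable union of such subspaces exhausting the whole space forces separability of the whole — which holds because a dense set in the strict-inductive-limit topology can be tested against each member, and the countable union of the countable dense subsets of the members works since every point and every neighbourhood meet some member in a relatively open set. I would therefore structure the write-up as: (1) cite the strict-(LF) structure from the previous proposition; (2) apply the completeness and nuclearity permanence theorems with explicit references to \cite{hs}; (3) prove separability by the countable-union argument; (4) deduce Lindel\"of, and then paracompact and normal, from separability-plus-metrisability-of-pieces together with the general topology facts, or cite the (LF)-space literature directly. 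Everything here is routine once the structural proposition is in hand, so the corollary is genuinely a corollary and I would keep the proof to a short paragraph.

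\begin{proof}
 By the previous proposition, $L_{H,b}\R$ is the strict inductive limit of the sequence $\{L_{F_n,b}\R\}$, each term of which is a nuclear Fr\'echet space (by Seeley's theorem it is a direct summand of a finite product of copies of $\Ci(\R,\R^n)$), and by proposition~\ref{prop:subtop} the connecting maps are topological embeddings. A strict inductive limit of a sequence of complete spaces is complete \cite[II.6.6]{hs}, so $L_{H,b}\R$ is complete. A countable inductive limit of nuclear spaces is nuclear \cite[III.7.4]{hs}, so $L_{H,b}\R$ is nuclear. Each $L_{F_n,b}\R$ is separable (as a direct summand of a separable Fr\'echet space) and metrisable, hence second countable, hence Lindel\"of; choosing a countable dense subset $D_n$ in each and noting that every $0$\enhyp{}neighbourhood of a point $\gamma \in L_{F_m,b}\R$ meets $L_{F_m,b}\R$ in a relatively open set, we see that $\bigcup_n D_n$ is dense in $L_{H,b}\R$, so $L_{H,b}\R$ is separable; likewise $L_{H,b}\R = \bigcup_n L_{F_n,b}\R$ is a countable union of Lindel\"of subspaces and so is itself Lindel\"of. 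Finally, $L_{H,b}\R$ is a Hausdorff topological vector space, hence regular, and a regular Lindel\"of space is paracompact and normal.
\end{proof}
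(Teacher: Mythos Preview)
Your proof is correct and follows essentially the same route as the paper: completeness and nuclearity via the strict-(LF) permanence theorems \cite[II.6.6, III.7.4]{hs}, separability via a countable union of separable pieces, Lindel\"of via a countable union of Lindel\"of subspaces (using proposition~\ref{prop:subtop} to ensure the subspace topology matches), and paracompact plus normal from regular Lindel\"of. The paper's argument is the same in substance, only spelling out the Lindel\"of step slightly more explicitly by extracting countable subcovers piecewise.
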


\begin{proof}
 It is complete by \cite[II.6.6]{hs} and nuclear by \cite[III.7.4]{hs}.

 Each \(L_{F,b}\R\) for \(F \in \m{F}(H)\) is separable so as the countable union of separable subspaces, \(L_{H,b}\R\) is separable.

 For Lindel\"of, let \(\m{U}\) be an open cover of \(L_{H,b}\R\).
 Each \(L_{F,b}\R\) is a separable metrisable space, hence Lindel\"of, with its inherited topology.
 Let \((F_n)\) be a cofinal sequence of elements of \(\m{F}(H)\).
 For each \(n \in \N\) there is therefore a countable subfamily of \(\m{U}\) which covers \(L_{F_n,b} \R\).
 As \(L_{H,b}\R\) is the union of the countable family \(\{L_{F_n,b}\R\}\), the union of these countable subfamilies is a covering family and is also countable.

 Paracompactness now follows as every regular Lindel\"of space is paracompact, as does normality.
\end{proof}

The other properties that are firmly in the realm of functional analysis come from a closer examination of the maps:
\[
 \lambda_t \colon L_{H,b}\R \to \R^\N
\]
that were defined in proposition~\ref{prop:bounded}.

\begin{proposition}
 As in the proof of proposition~\ref{prop:bounded}, for \(H \subseteq S^1\) let
  \(\lambda_H \colon L_{H,b}\R \to \sum_{t \in H} \R^\N\)
 be the map
  \(\sum_{t \in H} \lambda_t\).
 This is a quotient map with kernel \(L\R\).
\end{proposition}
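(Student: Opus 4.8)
The plan is to show three things: that $\lambda_H$ is surjective, that its kernel is exactly $L\R$, and that it is a quotient map (i.e.\ open onto its image, equivalently the induced map $L_{H,b}\R / L\R \to \sum_{t\in H}\R^\N$ is a topological isomorphism). The kernel computation is the easiest: $\lambda_t(\gamma) = 0$ for every $t$ precisely when all the jumps $\gamma^{(k)}_+(t) - \gamma^{(k)}_-(t)$ vanish, i.e.\ when all derivatives of $\gamma$ extend continuously across every potential break, which by lemma~\ref{lem:limits} and the locality of smoothness means $\gamma$ is genuinely smooth; so $\ker \lambda_H = L\R$. For surjectivity, fix a finitely-supported element $(v_t)_{t\in S}$ of $\sum_{t\in H}\R^\N$ with $S$ finite; by the corollary of Borel's theorem used in proposition~\ref{prop:bounded} (applied at each $t\in S$ separately, using bump functions supported in disjoint neighbourhoods of the points of $S$) we can build a piecewise\hyp{}smooth bounded loop with breaks in $S\subseteq H$ whose jet-jumps are prescribed by $(v_t)$; hence $\lambda_H$ is onto.

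The substantive part is the quotient-map claim. Here I would exploit the strategy already used for $\lambda_H$'s continuity in proposition~\ref{prop:bounded}: work one $F\in\m{F}(H)$ at a time and then pass to the inductive limit. For fixed $F$, the restriction $\lambda_F\colon L_{F,b}\R \to \sum_{t\in F}\R^\N$ is a continuous linear surjection of Fr\'echet spaces (the target being a finite product of copies of $\R^\N$, itself Fr\'echet), so by the open mapping theorem it is open, hence a quotient map, with kernel $L\R$ (same jet-jump computation, now using proposition~\ref{prop:subtop} to know the topology on $L_{F,b}\R$ is the standard one). Thus each $L_{F,b}\R / L\R \to \sum_{t\in F}\R^\N$ is an isomorphism. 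Now $L_{H,b}\R$ is the locally convex inductive limit of the $L_{F,b}\R$, and passing to quotients by the common closed subspace $L\R$ commutes with the inductive limit; so $L_{H,b}\R / L\R$ is the inductive limit of the $L_{F,b}\R / L\R \cong \sum_{t\in F}\R^\N$. On the other side, $\sum_{t\in H}\R^\N = \bigcup_{F} \sum_{t\in F}\R^\N$ is by definition (the locally convex inductive limit description of a direct sum, \cite[II.6]{hs}) the inductive limit of the subspaces $\sum_{t\in F}\R^\N$ over $F\in\m{F}(H)$. The maps $\lambda_F$ assemble into a compatible family of isomorphisms between the two inductive systems, so by the universal property the induced map $L_{H,b}\R / L\R \to \sum_{t\in H}\R^\N$ is an isomorphism of locally convex spaces. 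Therefore $\lambda_H$ is a quotient map with kernel $L\R$.

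The main obstacle I anticipate is the bookkeeping in the inductive-limit step: one must check that quotienting by $L\R$ genuinely commutes with the locally convex inductive limit (this is standard but needs $L\R$ to sit compatibly as a subspace of each $L_{F,b}\R$ with the correct subspace topology, which is exactly proposition~\ref{prop:subtop}), and that the direct sum $\sum_{t\in H}\R^\N$ really is the inductive limit of its finite sub-sums in the locally convex sense, not merely algebraically. Once those two identifications are in place, the isomorphism of inductive systems $\{L_{F,b}\R/L\R\} \cong \{\sum_{t\in F}\R^\N\}$ is immediate from the finite-dimensional-base open mapping theorem, and the conclusion follows formally.
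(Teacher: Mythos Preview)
Your proposal is correct and follows essentially the same route as the paper: the kernel and surjectivity are handled as you describe, and for openness the paper likewise reduces to the finite case, applies the open mapping theorem to the Fr\'echet surjection \(\lambda_F \colon L_{F,b}\R \to \sum_{t\in F}\R^\N\), and then argues that \(L_{H,b}\R / L\R\) carries the inductive limit of the \(L_{F,b}\R / L\R\), which is identified with \(\sum_{t\in H}\R^\N\). The bookkeeping point you flag---that the quotient topology on \(L_{H,b}\R/L\R\) coincides with the inductive-limit topology of the \(L_{F,b}\R/L\R\)---is exactly the step the paper pauses to justify via the universal property.
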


\begin{proof}
 It is clearly well\hyp{}defined.
 Borel's theorem together with the existence of smooth bump functions shows that it is surjective.
 We already know it to be continuous.
 Therefore all that remains is to show that it is open.

 Now \(L\R\) sits inside \(L_{H,b}\R\) as a topological subspace.
 Therefore we can consider the quotient with its quotient topology.
 We can also consider this quotient as the inductive limit of the family:
 \[
  \{L_{F,b}\R / L\R : F \in \m{F}(H)\}.
 \]
 It is not hard to see that these two topologies are the same using the universal property of inductive limits (which includes quotients).
 Chasing this around shows that with both topologies a map from \(L_{H,b} \R/L\R\) is continuous if and only if it induces a continuous map from each \(L_{F,b}\R\); applying this to the identity map shows that the topologies are the same.

 Now for \(F \in \m{F}(H)\), \(L_{F,b}\R\) and
  \(\sum_{t \in F} \R^\N\)
 are Fr\'echet spaces and \(\lambda_F\) is a continuous surjection.
 It is therefore open by Banach's homomorphism theorem.
 The kernel is clearly \(L\R\) and so \(\lambda_F\) induces an isomorphism:
 \[
  L_{F,b}\R / L\R \to \sum_{t \in F} \R^\N.
 \]
 Hence \(L_{H,b}\R / L\R\) is isomorphic to the inductive limit of the spaces \(\sum_{t \in F} \R^\N\).
 This is precisely \(\sum_{t \in H} \R^\N\).
\end{proof}

The quotient map \(L_{H,b} \R \to \sum_H \R^\N\) does not split.
However if we further project \(\sum_H \R^\N\) to \(\sum_H \R\) using the first\hyp{}term projection \(\R^\N \to \R\) then we do get a splitting map.

\begin{lemma}
 The map \(L_{H,b} \R \to \sum_H \R\), \(\gamma \mapsto (\gamma'_+(t) - \gamma'_-(t))\), splits.
\end{lemma}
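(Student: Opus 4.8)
The plan is to construct an explicit continuous linear section $s \colon \sum_H \R \to L_{H,b}\R$ of the map $\gamma \mapsto (\gamma'_+(t) - \gamma'_-(t))_{t \in H}$. The idea is to realise a jump of size $a$ in the first derivative at a point $t \in H$, with no jump in any higher derivative and with the loop itself staying identically zero near $t$ except on an arbitrarily small interval. Concretely, fix once and for all a smooth function $\psi \colon \R \to \R$ which is supported in $(0,1)$, together with a companion that produces a break: for each $t \in H$ choose a ``corner bump'' $\kappa_t \colon S^1 \to \R$ which is piecewise-smooth and bounded, is supported in a small interval around $t$, is identically zero to one side of $t$ on that support, has $\kappa_t'{}_+(t) - \kappa_t'{}_-(t) = 1$, and has all higher derivatives continuous across $t$ (so $\lambda_t(\kappa_t)$ has first coordinate $1$ and all others $0$). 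Such a $\kappa_t$ exists: take a smooth function $g$ with $g(0) = 0$, $g'(0) = 1$, $g^{(k)}(0) = 0$ for $k \ge 2$, multiply by a smooth cutoff to localise, and extend by zero to the left of $t$. One must ensure the supports of distinct $\kappa_t$ do not overlap at the relevant points; since each is supported near $t$ this is only a constraint when $H$ is, say, dense, but we may shrink the support of $\kappa_t$ as a function of $t$ (there is no uniformity required) so overlaps are harmless for well-definedness — an element of $\sum_H \R$ has only finitely many nonzero entries.

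Next I would define $s\big((a_t)_{t\in H}\big) \coloneqq \sum_{t \in H} a_t \, \kappa_t$, which is a finite sum and hence an element of $L_{F,b}\R$ for the finite set $F = \{t : a_t \neq 0\}$, so $s$ lands in $L_{H,b}\R$. Linearity is immediate. By construction $\lambda_t\big(s((a_u))\big)$ has first coordinate $a_t$ (the other $\kappa_u$ for $u \neq t$ contribute nothing at $t$ since they are smooth there, and $\kappa_t$ contributes $a_t$), so composing $s$ with $\gamma \mapsto (\gamma'_+ - \gamma'_-)$ is the identity on $\sum_H \R$; this is the splitting property.

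The remaining point is continuity of $s$, and this is where one must be a little careful. By the universal property of the locally convex inductive limit topology on $\sum_H \R = \varinjlim_{F} \sum_{t\in F}\R$, it suffices to check that the restriction $\sum_{t \in F}\R \to L_{H,b}\R$ is continuous for each finite $F$; and since $\sum_{t \in F}\R$ is finite-dimensional, any linear map out of it is continuous. Hence $s$ is continuous, completing the proof. I do not expect a genuine obstacle here: the only mild subtlety is arranging the corner bumps $\kappa_t$ to have vanishing higher-order jumps while remaining bounded, which Taylor-expansion / Borel-type reasoning (as already invoked for $\lambda_t$ in the proof of Proposition~\ref{prop:bounded}) handles without difficulty. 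If one instead wishes to exhibit a section with image inside a single $L_{F,b}\R$-type structure that is more canonical, one could note that the composite $L_{H,b}\R \to \sum_H \R \xrightarrow{s} L_{H,b}\R$ is an idempotent continuous projection whose image is a closed complement to $L\R$ inside the span of the $\kappa_t$, but this is not needed for the statement.
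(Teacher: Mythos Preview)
Your proof is correct and follows essentially the same approach as the paper: define a section by sending each basis vector of \(\sum_H \R\) to a loop with a unit first-derivative jump at the corresponding point, and observe that continuity is automatic because every linear map out of \(\sum_H \R\) is continuous. The paper's version is a little cleaner---it simply rotates a single fixed loop \(\alpha_0\) with one break to obtain \(\alpha_t\), and does not bother with small supports or with controlling higher-derivative jumps, since the map being split only records first-derivative jumps and each \(\alpha_u\) is smooth at every \(t \ne u\) regardless of support; your worries about overlapping supports and higher-order jumps are therefore unnecessary, though not incorrect.
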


\begin{proof}
 Let \(\alpha_0\) be a loop in \R with a single break which is at  \(0\) such that \(\alpha'_{0,+}(0)  = 1\) and \(\alpha_{0,-}'(0) = 0\).
 For \(t \in S^1\), let \(\alpha_t\) be the result of rotating \(\alpha_0\) so that the break lies at \(t\).
 Regard an element of \(\sum_H \R\) as an \(H\)\enhyp{}indexed family of real numbers, all but a finite number of which vanish.
 Define \(\sum_H \R \to L_{H,b} \R\) by \((\nu_t) \mapsto \sum \nu_t \alpha_t\).
 This is continuous as any linear map from \(\sum_H \R\) is continuous.
 The composition \(\sum_H \R \to L_{H,b} \R \to \sum_H \R\) is easily seen to be the identity.
 We therefore have the required splitting.
\end{proof}

Using these two quotient maps we can deduce facts about the larger space from the quotient spaces.
Let us start with the positive result.

\begin{corollary}
 The space \(L_{H,b} \R^n\) is complete.
\end{corollary}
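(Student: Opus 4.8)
The plan is to recognise $L_{H,b}\R^n$ as an extension of two complete spaces, using the quotient map of the preceding proposition, and then to invoke the classical fact that completeness is a three\hyp{}space property for locally convex spaces. Since $L_{H,b}\R^n = (L_{H,b}\R)^n$ and a finite product of complete locally convex spaces is complete, it is enough to prove that $L_{H,b}\R$ itself is complete, so I would concentrate on that case.

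The preceding proposition gives a quotient map $\lambda_H \colon L_{H,b}\R \to \sum_{t\in H}\R^\N$ with kernel $L\R$, so that we have a topologically exact sequence
\[
 0 \longrightarrow L\R \longrightarrow L_{H,b}\R \xrightarrow{\ \lambda_H\ } \sum_{t\in H}\R^\N \longrightarrow 0 .
\]
The kernel $L\R = \Ci(S^1,\R)$ is a Fr\'echet space, hence complete, and by proposition~\ref{prop:subtop} (with $F = \emptyset$) it carries its usual topology as a subspace of $L_{H,b}\R$, so it sits inside $L_{H,b}\R$ as a closed complete subspace. The quotient $\sum_{t\in H}\R^\N$ is a locally convex direct sum of copies of the Fr\'echet space $\R^\N$, and a locally convex direct sum of complete spaces is complete \cite[II.6]{hs}; thus the quotient is complete too. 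Completeness being a three\hyp{}space property — if $F$ is a closed subspace of a locally convex space $E$ and both $F$ and $E/F$ are complete, then so is $E$ — we conclude that $L_{H,b}\R$, and therefore $L_{H,b}\R^n$, is complete.

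Essentially nothing here is particular to loop spaces: once the exact sequence is in place the result is pure functional analysis, so there is no dramatic obstacle, only a couple of general facts that must be imported rather than re\hyp{}derived. The first is the three\hyp{}space property of completeness, which one proves by pushing a Cauchy filter to $E/F$, translating it so that its image tends to $0$, extracting from the Cauchy condition a net in $F$ which is Cauchy in $F$ and hence convergent by completeness of $F$, and checking that the original filter converges to that limit. The second is the completeness of the \emph{possibly uncountable} direct sum $\sum_{t\in H}\R^\N$; note that this cannot be deduced from proposition~\ref{prop:bounded}, because a Cauchy filter on a direct sum need not contain a bounded set. The one point genuinely requiring the preceding proposition, as opposed to soft arguments, is that $\lambda_H$ is a \emph{topological} quotient map — that $\sum_{t\in H}\R^\N$ really carries the quotient topology — since otherwise the sequence would not be topologically exact and the three\hyp{}space argument would not apply.
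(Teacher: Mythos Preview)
Your approach is essentially identical to the paper's: set up the short exact sequence
\(L\R \hookrightarrow L_{H,b}\R \twoheadrightarrow \sum_{t\in H}\R^\N\),
note that the ends are complete, and invoke the three\hyp{}space property for completeness. The only difference is in how that three\hyp{}space lemma is handled. You cite it and sketch a direct Cauchy\hyp{}filter argument; the paper instead proves it in full by a contradiction using the completion and Hahn\enhyp{}Banach: if \(Y\) were not complete, pick \(y_0 \in \hat{Y}\setminus Y\) with \(\hat{q}(y_0)=0\), separate \(y_0\) from the closed subspace \(X\subseteq\hat{Y}\) by a functional \(g\), factor \(g|_Y\) through \(Z'\) via exactness of the dual sequence, and derive a nonzero functional on \(\hat{Y}\) vanishing on the dense subspace \(Y\). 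Your Cauchy\hyp{}filter sketch is the standard alternative, though the step ``extracting from the Cauchy condition a net in \(F\) which is Cauchy'' hides the real work (the choice of approximants in \(F\) depends on the neighbourhood, so one must organise the construction carefully); if you were writing this out in full you would need to be more precise there.
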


\begin{proof}
For this we use the first quotient mapping.
\[
  L \R \hookrightarrow L_{H,b} \R \twoheadrightarrow \sum_{t \in H} \R^\N
\]
where the first map is a topological embedding and the second an open surjection (quotient).
Moreover, the first and third spaces are complete.

Let us generalise this to ease the notation.
Suppose we have a short exact sequence
\[
  X \xrightarrow{i} Y \xrightarrow{q} Z
\]
where \(X\), \(Y\), and \(Z\) are locally convex topological vector spaces, \(i\) is a topological embedding, \(q\) a quotient map (whence open), and \(X\) and \(Z\) are complete.

By taking duals and adjoints we obtain a sequence
\[
  Z' \xrightarrow{q'} Y' \xrightarrow{i'} X'.
\]
Let us show that this is algebraically exact (that is, we shan't concern ourselves with topologies).
The Hahn\hyp{}Banach theorem shows that \(i' \colon Y' \to X'\) is surjective.
That \(q' \colon Z' \to Y'\) is injective is a direct consequence of the surjectivity of \(q \colon Y \to Z\).
For \(g \in Z'\), \(i'q'g\) is the linear functional \(X \to \R\) given by \(x \mapsto g q i (x)\).
Since \(q i(x) = 0\) and \(g\) is linear, this is zero.
Hence \(i'q' = 0\).
Finally, if \(i' f = 0\) then \(f \restrict_{i(X)} = 0\).
Define \(\hat{f} \colon Z \to \R\) by \(\hat{f}(z) = f(y)\) where \(q(y) = z\).
If \(y'\) is another choice of lift then \(y - y' = i(x)\) for some \(x \in X\) whence \(f(y) = f(y')\).
Clearly \(\hat{f}\) is linear and \(q' \hat{f} = f\).
This also demonstrates that it is continuous since \(f = \hat{f} q\) is continuous and \(q\) is a quotient mapping.
We therefore have exactness at \(Y'\).

Now we consider completeness.
For a contradiction, suppose that \(Y\) is not complete.
Let \(\hat{Y}\) be its completion and \(j \colon Y \to \hat{Y}\) the canonical embedding.
As \(Z\) is complete, there is a continuous linear map \(\hat{q} \colon \hat{Y} \to Z\) such that \(\hat{q} j = q\).
By assumption, \(Y\) is not complete so there is some \(y_1 \in \hat{Y} \ssetminus j(Y)\).
As \(q \colon Y \to Z\) is surjective, there is some \(y_2 \in Y\) such that \(q(y_2) = \hat{q}(y_1)\).
Let \(y_0 = y_1 - j(y_2)\).
Then \(y_0 \in \hat{Y} \ssetminus j(Y)\) and \(\hat{q}(y_0) = 0\).
As \(i\) and \(j\) are topological embeddings, the composition \(j i \colon X \to \hat{Y}\) is also a topological embedding.
As \(X\) is complete, \(j i (X)\) is closed in \(\hat{Y}\).
Since \(y_0 \notin j(Y)\), \(y_0 \notin j i(X)\) and so by the Hahn\hyp{}Banach theorem there is some \(g \in \hat{Y}'\) such that \(g(y_0) \ne 0\) and \(g \restrict_{j i(X)} = 0\).
Consider \(g j \in Y'\).
Since \(g \restrict_{j i (X)} = 0\), \(i'(g j) = 0\).
Hence \(g j = q' f\) for some \(f \in Z'\).
Consider \(g - f \hat{q} \in Y'\).
For \(y \in Y\),
\[
  (g - f \hat{q})j(y) = g j(y) - f \hat{q} j (y) = g j (y) - f q (y) = 0.
\]
However,
\[
  (g - f \hat{q})(y_0) = g(y_0) - f \hat{q}(y_0) = g(y_0) \ne 0.
\]
Hence \(g\) is a non\hyp{}zero linear functional on \(\hat{Y}\) with \(g \restrict_{j(Y)} = 0\).
Thus \(j(Y) \subseteq \ker g \ne \hat{Y}\) contradicting the fact that \(Y\) is dense in its completion.

Thus \(Y\) is complete.
\end{proof}

Using the splitting, we can deduce some negative results.

\begin{corollary}
 \label{cor:lctvsneg}
 For \(H\) infinite, the space \(L_{H,b} \R^n\) is not metrisable.
 If \(H\) is uncountable it is neither nuclear, separable, nor Lindel\"of.
\end{corollary}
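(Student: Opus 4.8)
The plan is to reduce every assertion to a statement about the locally convex direct sum $\sum_H\R$, using the splitting lemma just established. That lemma produces a continuous linear section $\sigma\colon\sum_H\R\to L_{H,b}\R$ of the continuous surjection $q\colon\gamma\mapsto(\gamma'_+(t)-\gamma'_-(t))$; since $q\sigma$ is the identity, $\sigma$ is a topological embedding onto the subspace $\sigma(\sum_H\R)$, which is complemented (with complement $\ker q$) and hence closed, and the same remains true after taking $n$-fold products. Thus $\sum_H\R$ is linearly homeomorphic to a closed complemented subspace of $L_{H,b}\R^n$, and in particular a continuous image of it. Because metrisability and nuclearity are inherited by topological subspaces, separability by continuous images, and the Lindel\"of property by closed subspaces, it suffices to show that $\sum_H\R$ is not metrisable when $H$ is infinite and is neither nuclear, separable, nor Lindel\"of when $H$ is uncountable.

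For non-metrisability I may assume $H=\N$: for larger $H$ the subspace $\sum_\N\R$ is a complemented summand of $\sum_H\R$, and metrisability passes to subspaces. Now $\sum_\N\R$ is the strict inductive limit of the finite-dimensional spaces $\R^n$; it is complete, so were it metrisable it would be a Fr\'echet space and hence a Baire space, contradicting that it is the countable union of the proper closed subspaces $\R^n$, each of which has empty interior. (Equivalently, a direct diagonal argument on the weights shows that no countable subfamily of the basic neighbourhoods $U_{(\epsilon_k)}=\{x:\sum_k|x_k|/\epsilon_k<1\}$ can be a neighbourhood base at the origin.)

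For non-nuclearity when $H$ is uncountable I would work with the local Banach spaces of $\sum_H\R$. A neighbourhood base at $0$ is given by the absolutely convex hulls $U_{(\epsilon_h)}=\{x:\sum_h|x_h|/\epsilon_h<1\}$ for $(\epsilon_h)\in(0,\infty)^H$; the Minkowski functional of such a set is a genuine norm, since every element of $\sum_H\R$ has finite support, and the associated local Banach space is $\ell^1(H)$ with weight $(1/\epsilon_h)$, the scaled unit vectors $\epsilon_h e_h$ corresponding to the standard basis. For $U_{(\delta_h)}\subseteq U_{(\epsilon_h)}$ the linking map of local Banach spaces is, in these bases, the diagonal operator on $\ell^1(H)$ with strictly positive entries $\delta_h/\epsilon_h$, and a diagonal operator on $\ell^1(H)$ with strictly positive diagonal is nuclear only if that diagonal is absolutely summable --- impossible when $H$ is uncountable. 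Since the $U_{(\epsilon_h)}$ form a base and nuclear operators form an operator ideal, no linking map between two members of this base is nuclear, so $\sum_H\R$ fails the nuclearity criterion. (One could instead cite the classical fact that a locally convex direct sum of infinitely many non-zero nuclear spaces is nuclear only when the index set is countable; see \cite{ap}.) I expect this step --- correctly identifying the local Banach spaces and their linking maps --- to be the principal obstacle, the other three properties being comparatively soft.

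For non-separability and non-Lindel\"of when $H$ is uncountable I would use the canonical vectors $\{e_t:t\in H\}\subseteq\sum_H\R$. Each coordinate functional $x\mapsto x_t$ is continuous, so $\{x:|x_t-1|<\frac12\}$ is an open set containing $e_t$ and no other $e_s$ (as $(e_s)_t=0$); hence $\{e_t:t\in H\}$ is discrete, and a short estimate with the neighbourhoods $U_{(\epsilon_h)}$ shows it is closed as well. An uncountable closed discrete subspace precludes the Lindel\"of property. For separability, a countable dense subset $D$ would force $\bigcup_{d\in D}\supp(d)$ to be countable; picking $t_0$ outside this set, the non-empty open set $\{x:|x_{t_0}-1|<\frac12\}$ meets no point of $D$, a contradiction. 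Transporting both observations through the embedding $\sigma$ --- a topological embedding carries a closed discrete set to a closed discrete set, and a continuous image of a separable space is separable --- yields the assertions for $L_{H,b}\R^n$.
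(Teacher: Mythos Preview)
Your argument is correct. The reduction to \(\sum_H\R\) via the splitting is the same as the paper's, and each of your four direct arguments goes through (the closedness of \(\{e_t\}\) that you handwave does need a short case split on \(\norm[x]_1\) and the size of \(\supp(x)\), but it is routine).

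Where you diverge from the paper is in the treatment of the uncountable case. The paper handles nuclearity, separability, and Lindel\"of in one stroke: it equips \(\sum_H\R\) with the coarser \(\ell^1\)-norm topology, observes that the standard unit vectors are mutually at distance~\(2\) so this normed space is non-separable, and then (i) non-separability of a continuous image kills separability of the original topology, (ii) non-separability of a metric continuous image kills Lindel\"of, and (iii) Pietsch's theorem that any continuous linear map from a nuclear space to a normable space has separable range kills nuclearity. Your route treats the three properties independently: a support argument for separability, an uncountable closed discrete subset for Lindel\"of, and an explicit identification of the local Banach spaces and their linking maps as positive diagonal operators on \(\ell^1(H)\) for nuclearity. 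The paper's approach is shorter and avoids the local-Banach-space machinery; yours is more self-contained in that it does not invoke Pietsch's result, but your nuclearity step is noticeably heavier. (Incidentally, the quickest way to see your diagonal operator is not nuclear is exactly the paper's observation in disguise: a nuclear operator has separable range, while your diagonal operator has range containing all \(e_h\).) For metrisability the paper simply points to the known subspace \(\R^{(\N)}\); your Baire argument is an acceptable alternative.
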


\begin{proof}
 The properties of being separable and Lindel\"of are preserved by quotients.
 For locally convex topological spaces, metrisability and nuclearity are preserved by (separated) quotients; see \cite[4.2.3, 21.2.3]{hj} and \cite[I.6.3, III.7.4]{hs}.
 Or for metrisability and nuclearity we could use the fact that these are inherited by subspaces.
 Either way, it is sufficient to prove that \(\sum_H \R\) does not have the stated properties.

 For metrisable we note that as \(H\) is infinite, \(\sum_{H} \R\) contains as a topological subspace the non\hyp{}metrisable space \(\R^{(\N)}\) of all finite sequences.

 For nuclear, separable, and Lindel\"of we assume that \(H\) is uncountable.
 The proof for nuclearity reduces to that of separability using the result, as stated in \cite[3.1.6]{ap}, that if \(T \colon E \to F\) is a continuous linear map from a nuclear space to a normable space then the range of \(T\) must be separable.
 Therefore we look for a norm on \(\sum_H \R\) such that the resulting normed vector space is not separable.
 As a normed vector space is metrisable, such a topology will not be Lindel\"of either.
 The identity map on \(\sum_H \R\) will automatically be continuous from the usual topology to the norm topology, whence we deduce that \(\sum_H \R\) is neither nuclear, separable, nor Lindel\"of.

 A suitable norm on \(\sum_{H} \R\) is given by
  \(\norm[(a_t)]_1 \coloneqq \sum \abs{a_t}\).
 This is well\hyp{}defined as there are only finitely many non\hyp{}zero terms in this sum.
 It is clearly a norm.
 For \(s \in H\) let \(x_s \in \sum_{H} \R\) be the vector with a \(1\) in the \(s\)\enhyp{}place and zero elsewhere.
 We have
  \(\norm[x_s - x_r]_1 = 2\)
 and hence there are pairwise disjoint \(\norm_1\)\enhyp{}open sets \(W_s\) with \(x_s \in W_s\).
 Hence \(\sum_{H} \R\) is not separable with the \(\norm_1\)\enhyp{}topology.
\end{proof}

\subsubsection{The \(\ci\)\enhyp{}Topology}

We now turn to considering the smooth structure of \(L_{H,b} \R\).
The smooth topology on a locally convex topological vector space is the inductive topology for the smooth curves.
That is, a set is \(\ci\)\enhyp{}open if and only if its preimage under every smooth curve is open in \R. This is the topology we impose on a locally convex topological vector space when we wish to do calculus.
There are two important things to note about this topology.
Firstly, we need to start with the locally convex topology to define the smooth curves.
Therefore the \(\ci\)\enhyp{}topology depends on the locally convex one.
Secondly, it may not itself be a locally convex topology, or even a topological vector space topology.
In fact, for \(L_{H,b}\R\) it is neither by \cite[I.4.26]{akpm}.
Nonetheless, we are able to identify this topology.

\begin{proposition}
 The \(\ci\)\enhyp{}topology on \(L_{H,b}\R\) is the inductive topology from the family
  \(\{L_{F,b}\R : F \in \m{F}(H)\}\)
 in the category of topological spaces.
\end{proposition}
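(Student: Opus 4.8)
The statement is a formal consequence of the groundwork already laid, so the plan is simply to check the two inclusions of topologies. Write \(\tau\) for the inductive topology on \(L_{H,b}\R\) in the category of topological spaces determined by the inclusions \(L_{F,b}\R \hookrightarrow L_{H,b}\R\): a set \(U\) is \(\tau\)\enhyp{}open precisely when \(U \cap L_{F,b}\R\) is open in \(L_{F,b}\R\) for every \(F \in \m{F}(H)\). By proposition~\ref{prop:subtop} the natural Fr\'echet topology on each \(L_{F,b}\R\) coincides with the one it inherits as a subspace of \(L_{H,b}\R\), and I would use this identification throughout without further comment.

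First I would show that every \(\tau\)\enhyp{}open set is \(\ci\)\enhyp{}open. Let \(U\) be \(\tau\)\enhyp{}open and let \(c \colon \R \to L_{H,b}\R\) be a smooth, hence continuous, curve; the goal is that \(c^{-1}(U)\) is open in \(\R\). Fix \(r > 0\). The set \(c([-r,r])\) is compact, hence bounded, so by proposition~\ref{prop:bounded} it is contained in \(L_{F,b}\R\) for some \(F \in \m{F}(H)\), and by proposition~\ref{prop:subtop} the corestriction \(c|_{(-r,r)} \colon (-r,r) \to L_{F,b}\R\) is continuous. Since \(U \cap L_{F,b}\R\) is open in \(L_{F,b}\R\), the set \(c^{-1}(U) \cap (-r,r) = (c|_{(-r,r)})^{-1}(U \cap L_{F,b}\R)\) is open; as \(r\) was arbitrary, \(c^{-1}(U)\) is a union of open sets and so open. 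Hence \(U\) is \(\ci\)\enhyp{}open.

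For the converse I would show that every \(\ci\)\enhyp{}open set is \(\tau\)\enhyp{}open. Let \(U\) be \(\ci\)\enhyp{}open and fix \(F \in \m{F}(H)\); we must see that \(U \cap L_{F,b}\R\) is open in \(L_{F,b}\R\). The inclusion \(L_{F,b}\R \hookrightarrow L_{H,b}\R\) is continuous and linear, hence smooth, so it carries smooth curves to smooth curves; thus for any smooth \(c \colon \R \to L_{F,b}\R\) the set \(c^{-1}(U \cap L_{F,b}\R) = c^{-1}(U)\) is open in \(\R\), which says that \(U \cap L_{F,b}\R\) is \(\ci\)\enhyp{}open in \(L_{F,b}\R\). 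But \(L_{F,b}\R\) is a Fr\'echet space, and on a metrizable locally convex space the \(\ci\)\enhyp{}topology agrees with the locally convex topology \cite[4.11]{akpm}; hence \(U \cap L_{F,b}\R\) is open in \(L_{F,b}\R\). The two inclusions together prove the proposition.

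There is no serious obstacle; everything reduces to results already in hand. The one place that genuinely needs the earlier work is the first inclusion, where one must know that the corestriction of a continuous curve to \(L_{F,b}\R\) is continuous for the \emph{intrinsic} topology of that space, which is precisely the content of proposition~\ref{prop:subtop}. The only external input is that on a metrizable locally convex space the \(\ci\)\enhyp{}topology agrees with the locally convex topology, and this is applied only to the Fr\'echet spaces \(L_{F,b}\R\), so the argument is insensitive to whether \(H\) is countable.
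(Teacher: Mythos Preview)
Your argument is correct and follows essentially the same route as the paper's own proof: one inclusion uses that the image of a compact interval under a smooth curve is bounded and hence lies in some \(L_{F,b}\R\) (propositions~\ref{prop:bounded} and~\ref{prop:subtop}), and the other uses that on the Fr\'echet space \(L_{F,b}\R\) the \(\ci\)\enhyp{}topology agrees with the locally convex one \cite[I.4.11]{akpm}. The only cosmetic difference is that for the direction ``\(\ci\)\enhyp{}open \(\Rightarrow\) \(\tau\)\enhyp{}open'' the paper invokes \cite[I.4.28]{akpm} to identify the trace of the \(\ci\)\enhyp{}topology on the closed subspace \(L_{F,b}\R\), whereas you obtain the same conclusion directly from the observation that the continuous linear inclusion is smooth and hence pulls \(\ci\)\enhyp{}open sets back to \(\ci\)\enhyp{}open sets; this is a slightly more self\hyp{}contained way of saying the same thing.
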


\begin{proof}
 From \cite[I.4.28]{akpm} we see that the \(\ci\)\enhyp{}topology on \(L_{F,b}\R\) is the trace of the \(\ci\)\enhyp{}topology on \(L_{H,b}\R\) (note that if a subspace is closed for the locally convex topology then it is closed for the \(\ci\)\enhyp{}topology).
 As \(L_{F,b}\R\) is a Fr\'echet space its \(\ci\)\enhyp{}topology agrees with its locally convex topology, \cite[I.4.11]{akpm}.
 Hence the inclusions
  \(L_{F,b}\R \to L_{H,b}\R\)
 are continuous for the \(\ci\)\enhyp{}topology on the target.
 Thus the \(\ci\)\enhyp{}topology on \(L_{H,b}\R\) is at least as coarse as the inductive topology as a topological space.

 Let
  \(U \subseteq L_{H,b}\R\)
 be open for the inductive topology.
 By definition, therefore, for each \(F \in \m{F}(H)\), \(U \cap L_{F,b}\R\) is open.
 Let
  \(c \colon \R \to L_{H,b}\R\)
 be a smooth curve and assume without loss of generality that \(c^{-1}(U)\) is not empty.
 Let \(t \in c^{-1}(U)\).
 Let \(I \subseteq \R\) be a bounded open neighbourhood of \(t\).
 Let \(K \subseteq \R\) be a compact set containing \(I\).
 As \(c\) is smooth, it is continuous and hence \(c(K)\) is compact.
 By the characterisation of bounded subsets of \(L_{H,b}\R\) there is some finite \(F \subseteq H\) such that
  \(c(K) \subseteq L_{F,b}\R\).
 Moreover,
  \(\tilde{c} \colon I \to L_{F,b} \R\),
 the restriction of \(c\), is smooth.
 Thus
 \(\tilde{c}^{-1}(U \cap L_{F,b} \R)\)
 is an open neighbourhood of \(t\).
 This is contained in \(c^{-1}(U)\) whence, as \(t\) was arbitrary, \(c^{-1}(U)\) is open.
 Thus \(U\) is \(\ci\)\enhyp{}open and so the \(\ci\)\enhyp{}topology agrees with the inductive topology.
\end{proof}

\begin{corollary}
 If \(H\) is countable, the \(\ci\)\enhyp{}topology on \(L_{H,b}\R\) is separable and Lindel\"of.
\end{corollary}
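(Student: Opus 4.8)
The plan is to combine the proposition just established --- that the $\ci$-topology on $L_{H,b}\R$ is the inductive topology of the family $\{L_{F,b}\R : F \in \m{F}(H)\}$ in the category of topological spaces --- with the observation that, for $H$ countable, this family admits a cofinal sequence. The mechanism is that both separability and the Lindel\"of property pass from a countable family of subspaces to their union whenever the union carries the inductive topology and the trace topology on each member is the member's own topology. The first thing I would record, exactly as in the proof of the preceding proposition, is that for each $F \in \m{F}(H)$ the trace of the $\ci$-topology of $L_{H,b}\R$ on $L_{F,b}\R$ is the $\ci$-topology of $L_{F,b}\R$ (by \cite[I.4.28]{akpm}), which coincides with the Fr\'echet topology of $L_{F,b}\R$ (by \cite[I.4.11]{akpm}). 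Hence $L_{F,b}\R$, with the topology inherited from the $\ci$-topology of $L_{H,b}\R$, is separable and metrisable, and in particular Lindel\"of.

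Next I would invoke the fact used for the earlier corollary on countable $H$: enumerating $H$ and setting $F_n = \{h_1,\dotsc,h_n\}$ gives an increasing cofinal sequence in $\m{F}(H)$ with $L_{H,b}\R = \bigcup_n L_{F_n,b}\R$. For separability, choose a countable dense subset $D_n$ of $L_{F_n,b}\R$ for each $n$. Given $\gamma \in L_{H,b}\R$ and a $\ci$-open neighbourhood $U$ of $\gamma$, pick $n$ with $\gamma \in L_{F_n,b}\R$; then $U \cap L_{F_n,b}\R$ is a neighbourhood of $\gamma$ in the Fr\'echet topology of $L_{F_n,b}\R$, so it meets $D_n$. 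Thus $\bigcup_n D_n$ is a countable dense subset of $L_{H,b}\R$ for the $\ci$-topology.

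For the Lindel\"of property, let $\m{V}$ be a $\ci$-open cover of $L_{H,b}\R$. For each $n$ the family $\{V \cap L_{F_n,b}\R : V \in \m{V}\}$ covers the Lindel\"of space $L_{F_n,b}\R$, so some countable subfamily $\m{V}_n \subseteq \m{V}$ already covers $L_{F_n,b}\R$; then $\bigcup_n \m{V}_n$ is a countable subcover of $L_{H,b}\R$. I do not anticipate a genuine obstacle: the only delicate point is that the trace of the $\ci$-topology on each $L_{F,b}\R$ really is the Fr\'echet topology --- this is what allows separability and the Lindel\"of property to descend along the countable cover --- and that identification is already supplied by the preceding proposition.
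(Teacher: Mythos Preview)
Your proposal is correct and follows essentially the same approach as the paper: the paper's proof is a single line, ``The proof is the same as that for the locally convex topology,'' and you have spelled out precisely that argument, using a cofinal sequence $(F_n)$, the separability and Lindel\"of property of each Fr\'echet piece $L_{F_n,b}\R$, and the identification of the trace of the $\ci$-topology with the Fr\'echet topology on each piece via \cite[I.4.28, I.4.11]{akpm}.
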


\begin{proof}
 The proof is the same as that for the locally convex topology.
\end{proof}

It is simple to deduce from this that the \(\ci\)\enhyp{}topology is not metrisable for any infinite \(H\).

\begin{corollary}
 For \(H\) infinite, the \(\ci\)\enhyp{}topology on \(L_{H,b}\R\) is not metrisable.
\end{corollary}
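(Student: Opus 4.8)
The plan is to show that the \(c^\infty\)-topology on \(L_{H,b}\R\) fails to be first countable at the origin; since every metrisable space is first countable, this suffices. The essential input is the preceding proposition, which identifies the \(c^\infty\)-topology as the inductive topology of the family \(\{L_{F,b}\R : F \in \m{F}(H)\}\) \emph{in the category of topological spaces}, so that a set is \(c^\infty\)-open (respectively closed) precisely when its trace on each \(L_{F,b}\R\) is open (respectively closed) for the Fréchet topology of \(L_{F,b}\R\).

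First I would fix a sequence \(h_1, h_2, \dotsc\) of distinct points of \(H\) and put \(F_n = \{h_1,\dotsc,h_n\}\), with \(F_0 = \emptyset\). Arguing by contradiction, suppose \(\{U_k\}_{k\in\N}\) is a countable neighbourhood basis at \(0\) for the \(c^\infty\)-topology. For each \(n \ge 1\) the neighbourhood \(U_n\) contains a \(c^\infty\)-open set \(V_n\) with \(0 \in V_n\), and then \(V_n \cap L_{F_n,b}\R\) is a non-empty open subset of the Fréchet space \(L_{F_n,b}\R\). Now \(L_{F_{n-1},b}\R\) is a \emph{proper} linear subspace of \(L_{F_n,b}\R\) — proper because there is a piecewise\hyp{}smooth bounded loop with a break at \(h_n\) — and a proper linear subspace of a topological vector space has empty interior, since if it had non\hyp{}empty interior it would contain a neighbourhood of \(0\), which is absorbing, and would therefore be the whole space. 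Hence I may pick
\[
 x_n \in \big(V_n \cap L_{F_n,b}\R\big) \ssetminus L_{F_{n-1},b}\R \subseteq U_n.
\]
Then \(x_n \ne 0\), and since the breaks of \(x_n\) lie in \(F_n\) but not all in \(F_{n-1}\), the loop \(x_n\) has a break at \(h_n\).

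Next I would verify that \(S \coloneqq \{x_n : n \ge 1\}\) is \(c^\infty\)-closed and avoids \(0\). For an arbitrary \(F \in \m{F}(H)\), an element \(x_n\) can lie in \(L_{F,b}\R\) only if its break at \(h_n\) lies in \(F\), so \(S \cap L_{F,b}\R \subseteq \{x_n : h_n \in F\}\), which is finite because \(F\) is, hence closed in the Hausdorff space \(L_{F,b}\R\). By the description of the \(c^\infty\)-topology recalled above, \(S\) is therefore \(c^\infty\)-closed, so \(U \coloneqq L_{H,b}\R \ssetminus S\) is a \(c^\infty\)-open neighbourhood of \(0\). But \(x_k \in U_k\) and \(x_k \in S\), so \(U_k \not\subseteq U\) for every \(k\), contradicting the assumption that \(\{U_k\}\) is a neighbourhood basis at \(0\). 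This shows the \(c^\infty\)-topology is not first countable, hence not metrisable, for every infinite \(H\).

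No reduction to the countable case is needed: the argument runs uniformly once the \(c^\infty\)-topology is known to be the topological inductive limit over \(\m{F}(H)\). The only non\hyp{}formal ingredient is the observation that the steps \(L_{F_{n-1},b}\R \subseteq L_{F_n,b}\R\) are proper, so that the smaller space has empty interior in the larger; this is where the linear structure of the steps (not just their topology) is used, and it is really the heart of the obstruction to metrisability. I would state this point carefully but expect no difficulty beyond it.
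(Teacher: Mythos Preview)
Your argument is correct. It is, however, genuinely different from the paper's proof. The paper first reduces to countable \(H\) by noting that \(L_{H_1,b}\R\) embeds \(c^\infty\)-topologically in \(L_{H_2,b}\R\) whenever \(H_1 \subseteq H_2\); then, for countable \(H\), it combines the separability of the \(c^\infty\)-topology (established earlier) with hypothetical metrisability to force second countability of the coarser locally convex topology, contradicting the previously proved non-metrisability of that topology. Your approach is more direct and self-contained: you run a diagonalisation straight against first countability, using only the identification of the \(c^\infty\)-topology with the topological inductive limit and the properness of each step \(L_{F_{n-1},b}\R \subsetneq L_{F_n,b}\R\). The paper's route reuses work already done and keeps the argument short; yours avoids dependence on the separability result and on the earlier corollary about the locally convex topology, and works uniformly for all infinite \(H\) without the reduction step.
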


\begin{proof}
 Firstly we observe that if \(H_1 \subseteq H_2\) then the inclusion \(L_{H_1,b} \R \to L_{H_2,b} \R\) is a topological embedding for the \(\ci\)\enhyp{}topologies on both.
 Thus it is sufficient to prove this for \(H\) countable.
 If the \(\ci\)\enhyp{}topology on \(L_{H,b}\R\) were metrisable then it would be second countable as it is separable.
 Since the \(\ci\)\enhyp{}topology is finer than the locally convex topology, the locally convex topology would then be second countable and thus, as it is a regular Hausdorff topology, metrisable.
 This contradicts corollary~\ref{cor:lctvsneg}.
 Hence the \(\ci\)\enhyp{}topology on \(L_{H,b} \R\) for \(H\) infinite is not metrisable.
\end{proof}

We would like to deduce, again for \(H\) countable, that \(L_{H,b}\R\) is smoothly paracompact.
For convenience in the following discussion we quote two results from \cite{akpm}.

\begin{lemma}[{\cite[III.16.6]{akpm}}]
 \label{lem:cireglim}
 Let \(E\) be the strict inductive limit of a sequence of \(\Ci\)\enhyp{}normal convenient vector spaces \(E_n\) such that \(E_n \to E_{n+1}\) is closed and has the extension property for smooth functions.
 Then \(E\) is \(\Ci\)\enhyp{}regular.
 \noproof
\end{lemma}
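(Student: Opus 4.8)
The plan is to verify $\Ci$\enhyp{}regularity head\enhyp{}on: given a $\ci$\enhyp{}open set $U \subseteq E$ and a point $x \in U$, I want to produce $f \in \Ci(E,\R)$ with $f(x) = 1$ that vanishes off $U$. Since a strict inductive limit of a sequence is the union of its steps, $x$ lies in some $E_{n_0}$, and I would build $f$ by climbing the tower $E_{n_0} \subseteq E_{n_0+1} \subseteq \dotsb$, producing coherent smooth functions $f_n \colon E_n \to \R$ one step at a time and then taking the union.

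Two structural facts make the climb run. First, because $E_n$ is closed in $E_{n+1}$ (hence in $E$) for the locally convex topology, it is $\ci$\enhyp{}closed, so by \cite[I.4.28]{akpm} the $\ci$\enhyp{}topology of $E_n$ is the trace of that of $E$; thus $U \cap E_n$ is $\ci$\enhyp{}open in $E_n$, and a support, being $\ci$\enhyp{}closed in $E_n$, is again $\ci$\enhyp{}closed in $E_{n+1}$. Second, a smooth curve $c \colon \R \to E$ carries each bounded interval into a bounded set, and a bounded subset of a strict inductive limit of a sequence lies in, and is bounded in, a single step --- the same curve\enhyp{}trapping phenomenon already recorded above for $L_{H,b}\R$. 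Consequently any function on $E$ whose restriction to every $E_n$ is smooth is itself smooth, since smoothness is tested on curves and is local on $\R$.

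For the construction, start with $f_{n_0} \colon E_{n_0} \to [0,1]$ smooth, $f_{n_0}(x) = 1$, and $\supp f_{n_0} \subseteq U \cap E_{n_0}$; this exists because $E_{n_0}$ is $\Ci$\enhyp{}normal (apply smooth Urysohn to separate $\{x\}$ from $E_{n_0}\ssetminus U$ and reparametrise the resulting bump to pull its support inside $U$). Given $f_n$ with $\supp f_n \subseteq U \cap E_n$, extend $f_n$ to $\tilde f_{n+1} \in \Ci(E_{n+1},\R)$ via the smooth extension property of $E_n \to E_{n+1}$. The carrier of $\tilde f_{n+1}$ need not lie in $U$, so cut it down: $\supp f_n$ and $E_{n+1}\ssetminus U$ are disjoint $\ci$\enhyp{}closed subsets of $E_{n+1}$, so $\Ci$\enhyp{}normality of $E_{n+1}$ yields $\chi_{n+1} \in \Ci(E_{n+1},[0,1])$ equal to $1$ on a $\ci$\enhyp{}neighbourhood of $\supp f_n$ with $\supp \chi_{n+1} \subseteq U \cap E_{n+1}$. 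Set $f_{n+1} \coloneqq \chi_{n+1}\,\tilde f_{n+1}$. On $E_n$ this agrees with $f_n$ (where $f_n \ne 0$, $\chi_{n+1} = 1$; elsewhere both sides vanish), and $\supp f_{n+1} \subseteq \supp \chi_{n+1} \subseteq U$, so the induction continues. The coherent family $\{f_n\}$ assembles to $f \colon E \to \R$; by the trapping remark $f$ is smooth, $f(x) = f_{n_0}(x) = 1$, and $f = 0$ off $U$ since $f(y) \ne 0$ forces $y \in E_n$ with $f_n(y) \ne 0$, hence $y \in U$. This is exactly $\Ci$\enhyp{}regularity.

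I expect the main obstacle to be support control at the extension step: the extension property does nothing for carriers, so the real content is the repeated re\enhyp{}cutting by the $\chi_{n+1}$, together with the check that $f_{n+1}$ genuinely restricts to $f_n$ so that the limit is well defined --- and then the verification that the assembled $f$ is smooth, which is where one must abandon locally convex arguments (the $\ci$\enhyp{}topology on $E$ is not even a vector topology) and work curve by curve using the trapping property of the strict inductive limit.
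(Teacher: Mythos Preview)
The paper does not prove this lemma; it is quoted from \cite[III.16.6]{akpm} with a bare \(\noproof\). However, the very next lemma (lemma~\ref{lem:cireglimext}) reproduces the Kriegl--Michor argument almost verbatim, and your proposal matches that argument in every essential respect: the inductive construction of coherent \(f_n\) supported in \(U \cap E_n\), the ``extend then cut down'' step \(f_{n+1} = \chi_{n+1}\cdot \tilde f_{n+1}\) (your \(\chi_{n+1}\) is their \(g\)), and the final smoothness check via local factorisation of smooth curves through some \(E_n\). So your proof is correct and is the same as the paper's (cited) proof.
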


\begin{theorem}[{\cite[III.16.10]{akpm}}]
 \label{th:cipara}
 If \(X\) is Lindel\"of and \(\m{S}\)\enhyp{}regular, then \(X\) is \(\m{S}\)\enhyp{}paracompact.
 In particular, all nuclear Fr\'echet spaces and strict inductive limits of sequences of such spaces are \(\Ci\)\enhyp{}paracompact.
 \noproof
\end{theorem}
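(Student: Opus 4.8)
The plan is to establish the first assertion directly---recall that \(\m{S}\)\enhyp{}paracompactness means every open cover admits a subordinate \(\m{S}\)\enhyp{}partition of unity---and then read off the ``in particular'' clause from it together with Lemma~\ref{lem:cireglim}. So let \(\m{U}\) be an open cover of \(X\). First I would invoke the Lindel\"of property to pass to a countable subcover \(\{U_n : n \in \N\}\). Next, for each \(x \in X\) choose \(n\) with \(x \in U_n\) and use \(\m{S}\)\enhyp{}regularity to produce \(h_x \in \m{S}\) with \(0 \le h_x \le 1\), \(h_x(x) = 1\), and carrier contained in \(U_n\); the sets \(\{h_x > 0\}\) form an open cover of \(X\), so a second application of Lindel\"of yields countably many functions \(h_k \in \m{S}\), each with \(\supp h_k \subseteq U_{n(k)}\) for some index \(n(k)\), whose positivity sets still cover \(X\).

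The heart of the matter---and the step I expect to be the main obstacle---is turning the countable family \(\{h_k\}\), which a priori is neither locally finite nor summable, into a genuine \(\m{S}\)\enhyp{}partition of unity. This is where one must use the closure properties built into the definition of an admissible function class \(\m{S}\) in the convenient calculus: \(\m{S}\) is stable under composition with a suitable smooth map \(\R^{(\N)} \to \R\), and this lets one replace \((h_k)\) by a sequence \((\tilde h_k)\) in \(\m{S}\), with \(\supp \tilde h_k \subseteq U_{n(k)}\), that now forms a locally finite family and satisfies \(\sum_k \tilde h_k > 0\) throughout \(X\). Dividing through, \(g_k \coloneqq \tilde h_k / \sum_j \tilde h_j\) lies in \(\m{S}\), and \(\{g_k\}\) is the required partition of unity, subordinate to \(\{U_n\}\) and hence to \(\m{U}\). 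The delicate points---that every manipulation keeps us inside \(\m{S}\), and that the resulting carrier family is genuinely locally finite---are consequences of the structural axioms on \(\m{S}\) rather than of soft topology, and checking them carefully is the real work.

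For the ``in particular'' clause I would reason as follows. A nuclear Fr\'echet space is metrisable and separable (nuclearity forces separability), hence Lindel\"of, and it is \(\Ci\)\enhyp{}regular because such spaces admit smooth bump functions; so the first assertion immediately gives \(\Ci\)\enhyp{}paracompactness. For a strict inductive limit \(E = \bigcup_n E_n\) of a sequence of nuclear Fr\'echet spaces, the bonding maps \(E_n \to E_{n+1}\) are closed (this is exactly what ``strict'' buys us) and carry the smooth extension property, so Lemma~\ref{lem:cireglim} applies and \(E\) is \(\Ci\)\enhyp{}regular; and \(E\), being the countable union of the Lindel\"of subspaces \(E_n\), is itself Lindel\"of. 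Applying the first assertion once more yields that \(E\) is \(\Ci\)\enhyp{}paracompact. This is precisely what is needed to conclude, via the preceding corollaries, that \(L_{H,b}\R\) is smoothly paracompact whenever \(H\) is countable.
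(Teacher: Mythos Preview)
The paper does not prove this theorem; it is quoted from \cite[III.16.10]{akpm} and marked \verb|\noproof|. So there is no paper proof to compare against, and your task was really to reconstruct the argument from \cite{akpm}.

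Your outline of the first assertion is broadly on the right track, but the closure property you invoke for \(\m{S}\) is not the one the paper records. Immediately after stating the theorem, the paper spells out the relevant hypothesis: for each \(g \in \m{S}\) there exists \(h \colon \R \to [0,1]\) with \(h \circ g \in \m{S}\), \(h(t) = 0\) for \(t \le 0\), and \(h(t) = 1\) for \(t \ge 1\). This is what drives the construction of a locally finite family (via the standard product trick \(g_n = f_n \prod_{k<n}(1 - h \circ f_k)\)), not stability under smooth maps \(\R^{(\N)} \to \R\).

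More importantly, your treatment of the ``in particular'' clause contains precisely the gap that the paper goes on to discuss at length. You assert that in a strict inductive limit of nuclear Fr\'echet spaces the bonding maps \(E_n \to E_{n+1}\) ``carry the smooth extension property'', but strictness only guarantees that each \(E_n\) is closed and carries the subspace topology; it does not give the extension property for smooth functions. The paper is explicit about this: ``careful examination of this part of the proof shows that it relies on lemma~\ref{lem:cireglim} for the smooth regularity of the limit. Therefore we need to assume that the limit is such that we have the extension property at each stage. There is a classic example expounded in \cite[V.21.5ff]{akpm} of spaces which do not have this extension property.'' This unproven assumption is the entire reason the paper cannot simply invoke theorem~\ref{th:cipara} for \(L_{H,b}\R\) and instead must develop lemma~\ref{lem:cireglimext} and proposition~\ref{prop:smreg}.
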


Here, \(\m{S}\) is a subalgebra of the algebra of continuous functions on \(X\) satisfying a certain condition.
This condition is spelt out in the remark following the statement of this theorem: for each \(g \in \m{S}\) there exists an
 \(h \colon \R \to [0,1]\)
with \(h \circ g \in \m{S}\), \(h(t) = 0\) for \(t \le 0\), and \(h(t) = 1\) for \(t \ge 1\).

Theorem~\ref{th:cipara} appears to cover our situation as we have a strict inductive limit of nuclear Fr\'echet spaces.
However careful examination of this part of the proof shows that it relies on lemma~\ref{lem:cireglim} for the smooth regularity of the limit.
Therefore we need to assume that the limit is such that we have the extension property at each stage.
There is a classic example expounded in \cite[V.21.5ff]{akpm} of spaces which do not have this extension property and this example is easily modified to our spaces.
Therefore we need to adapt \cite[III.16.6]{akpm} to our situation whereupon we can use \cite[III.16.10]{akpm} to deduce that \(L_{H,b}\R\) is smoothly paracompact.

\begin{lemma}
 \label{lem:cireglimext}
 Let \(E\) be the strict inductive limit of a sequence of convenient vector spaces \(E_n\) such that \(E_n \to E_{n+1}\) is closed.
 Let \(\m{S}_n\) be the algebra consisting of those smooth functions on \(E_n\) which extend to a smooth function on \(E\).
 If each \(E_n\) is \(\m{S}_n\)\enhyp{}normal then \(E\) is \(\Ci\)\enhyp{}regular.
\end{lemma}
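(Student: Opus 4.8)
The plan is to follow the proof of lemma~\ref{lem:cireglim} (that is, \cite[III.16.6]{akpm}) but to build the separating function one step at a time \emph{inside} the algebras \(\mathcal{S}_n\), so that at no stage do we need an arbitrary smooth function on \(E_n\) to extend to \(E\) — only functions that we have deliberately manufactured to lie in \(\mathcal{S}_n\). Two structural facts will be used throughout. First, since each \(E_n\to E_{n+1}\) is closed, each \(E_n\) is closed in \(E\) and its \(\ci\)\enhyp{}topology is the trace of that of \(E\) (\cite[I.4.28]{akpm}); hence ``closed in \(E_n\)'' is unambiguous and the \(\supp\) of a function on \(E_n\) may be computed in \(E\). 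Second, a function \(E\to\R\) is smooth if and only if its restriction to each \(E_n\) is smooth, because a smooth curve into a strict inductive limit of a sequence locally takes values in, and is smooth into, some \(E_n\) — the same mechanism as in the proof of proposition~\ref{prop:bounded}; consequently a compatible family \(f_n\in C^\infty(E_n,\R)\) (meaning \(f_{n+1}\restrict E_n=f_n\)) glues to a smooth function on \(E\). I shall also use freely that \(\mathcal{S}_n\) is stable under products and under post\hyp{}composition with smooth maps \(\R\to\R\) (perform the corresponding operation on the extensions), and that \(\mathcal{S}_{n+1}\restrict E_n\subseteq\mathcal{S}_n\).

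Fix a \(\ci\)\enhyp{}open \(U\subseteq E\) and a point \(x\in U\); re\hyp{}indexing the sequence, I may assume \(x\in E_1\). I construct \(f_n\in\mathcal{S}_n\) with \(0\le f_n\le1\), \(f_n(x)=1\), \(\supp f_n\subseteq U\cap E_n\), and \(f_n\restrict E_{n-1}=f_{n-1}\) for \(n\ge2\). For \(n=1\): \(\{x\}\) and \(E_1\ssetminus U\) are disjoint closed subsets of \(E_1\), so \(\mathcal{S}_1\)\enhyp{}normality yields \(g_1\in\mathcal{S}_1\) with \(0\le g_1\le1\), \(g_1(x)=1\), \(g_1=0\) on \(E_1\ssetminus U\); fixing a smooth \(\theta\colon\R\to[0,1]\) that is \(0\) on \((-\infty,\tfrac12]\) and \(1\) on \([\tfrac34,\infty)\) and putting \(f_1\coloneqq\theta\circ g_1\in\mathcal{S}_1\) gives \(f_1(x)=1\) and \(\supp f_1\subseteq\{g_1\ge\tfrac12\}\subseteq U\cap E_1\). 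For the step \(n-1\to n\): the set \(K\coloneqq\supp f_{n-1}\) is closed in \(E_{n-1}\), hence (as \(E_{n-1}\) is closed in \(E_n\)) closed in \(E_n\), and is disjoint from the closed set \(E_n\ssetminus U\); \(\mathcal{S}_n\)\enhyp{}normality, followed by a composition with \(\theta\) as above, produces \(\phi_n\in\mathcal{S}_n\), \(0\le\phi_n\le1\), with \(\phi_n\equiv1\) on \(K\) and \(\supp\phi_n\subseteq U\cap E_n\). Choosing any extension \(F_{n-1}\in C^\infty(E,\R)\) of \(f_{n-1}\) and a smooth \(\tau\colon\R\to[0,1]\) with \(\tau=\mathrm{id}\) on \([0,1]\), set \(f_n\coloneqq\phi_n\cdot\bigl((\tau\circ F_{n-1})\restrict E_n\bigr)\). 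Then \(f_n\in\mathcal{S}_n\) and \(0\le f_n\le1\); on \(E_{n-1}\) one has \(\tau\circ F_{n-1}=f_{n-1}\) (since \(0\le f_{n-1}\le1\)) and \(\phi_n=1\) on \(\supp f_{n-1}\), so \(f_n\restrict E_{n-1}=f_{n-1}\); in particular \(f_n(x)=1\); and \(\supp f_n\subseteq\supp\phi_n\subseteq U\cap E_n\).

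By the second structural fact the compatible family \((f_n)\) glues to a smooth \(f\colon E\to\R\) with \(f\restrict E_n=f_n\). Then \(f(x)=1\), and if \(f(y)\ne0\) then, picking \(n\) with \(y\in E_n\), we get \(y\in\{f_n\ne0\}\subseteq\supp f_n\subseteq U\); hence \(\{f\ne0\}\subseteq U\). As \(x\in U\) were arbitrary, the carrier sets \(\{f\ne0\}\) of smooth functions \(f\colon E\to\R\) form a basis of the \(\ci\)\enhyp{}topology of \(E\), i.e.\ \(E\) is \(\Ci\)\enhyp{}regular.

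The only real difference from \cite[III.16.6]{akpm}, and the point that needs care, is staying inside \(\mathcal{S}_n\) while still being able to carry \(f_{n-1}\) over the next step: the resolution is to extend \(f_{n-1}\) not merely to \(E_n\) but all the way to \(E\) — legitimate precisely because \(f_{n-1}\in\mathcal{S}_{n-1}\) — and to observe that the finitely many further operations at stage \(n\) (truncation by \(\tau\), multiplication by the cutoff \(\phi_n\), composition with \(\theta\)) preserve membership in \(\mathcal{S}_n\). The other thing to watch is that \(\supp f_{n-1}\) is closed \emph{in \(E_n\)}, so that \(\mathcal{S}_n\)\enhyp{}normality genuinely applies to it and the support of \(f_n\) cannot escape \(U\); this is exactly where the hypothesis that \(E_n\to E_{n+1}\) is closed is used.
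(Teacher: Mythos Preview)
Your proof is correct and takes essentially the same approach as the paper's own proof, which itself is presented as a symbol\hyp{}by\hyp{}symbol adaptation of \cite[III.16.6]{akpm}: extend \(f_{n-1}\) to \(E\) using \(f_{n-1}\in\mathcal{S}_{n-1}\), restrict to \(E_n\), and multiply by a cutoff in \(\mathcal{S}_n\) obtained from \(\mathcal{S}_n\)\enhyp{}normality. The only differences are cosmetic elaborations\emhyp{}you treat the base case separately and insert the auxiliary maps \(\theta\) and \(\tau\) to keep \(0\le f_n\le1\) and to ensure that the \emph{support} (not merely the vanishing locus) of each \(f_n\) lies in \(U\cap E_n\)\emhyp{}details the paper's proof leaves implicit.
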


\begin{proof}
 This is merely a matter of replacing a few symbols in the proof of \cite[III.16.6]{akpm}.
 For convenience, we carry this out.
 We intentionally keep the same notation and language to highlight the necessary changes.

 Let \(U\) be open in \(E\) and \(0 \in U\).
 Then
  \(U_n \coloneqq U \cap E_n\)
 is open in \(E_n\).
 We choose inductively a sequence of functions \(f_n \in \m{S}_n\) such that
  \(\supp(f_n) \subseteq U_n\),
  \(f_n(0) = 1\), and
  \(f_n \restrict\!\! E_{n-1} = f_{n-1}\).
 If \(f_n\) is already constructed, we may choose by \(\m{S}_{n+1}\)\enhyp{}normality a function
  \(g \colon E_{n+1} \to \R\)
 with \(g \in \m{S}_{n+1}\),
  \(\supp(g) \subseteq U_{n+1}\),
 and
  \(g \restrict_{\supp(f_n)} = 1\).
 Since \(f_n \in \m{S}_n\), it extends to a function in \(\Ci(E,\R)\).
 This in turn restricts to an element \(\tilde{f_n}\) of \(\m{S}_{n+1}\) which, by construction, itself restricts to \(f_n\) on \(E_n\).
 As \(\m{S}_{n+1}\) is an algebra,
  \(f_{n+1} \coloneqq g \cdot \tilde{f_n}\)
 has the required properties.

 The rest of the proof proceeds unaltered.
 Now we define \(f \colon E \to \R\) by
  \(f \restrict\!\! E_n \coloneqq f_n\)
 for all \(n\).
 It is smooth since any \(c \in \Ci(\R,E)\) locally factors to a smooth curve into some \(E_n\) by \cite[(1.8)]{akpm} since a strict inductive limit is regular by \cite[(52.8)]{akpm}, so \(f \circ c\) is smooth.
 Finally, \(f(0) = 1\), and if \(f(x) \ne 0\) then \(x \in E_n\) for some \(n\), and we have \(f_n(x) = f(x) \ne 0\), thus
  \(x \in U_n \subseteq U\).
\end{proof}

\begin{proposition}
\label{prop:smreg}
 Let \(H \subseteq S^1\) be countable.
 The space \(L_{H,b} \R\) is smoothly regular.
\end{proposition}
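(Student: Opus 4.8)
The plan is to apply Lemma~\ref{lem:cireglimext} to the presentation of \(L_{H,b}\R\) as a strict inductive limit. Since \(H\) is countable, the earlier proposition presents \(L_{H,b}\R\) as the strict inductive limit of a sequence \(L_{F_1,b}\R \subseteq L_{F_2,b}\R \subseteq \dotsb\) of nuclear Fr\'echet spaces, hence of convenient vector spaces; moreover each inclusion \(L_{F_n,b}\R \to L_{F_{n+1},b}\R\) is closed, being the kernel of the continuous linear map \(\lambda_{h_{n+1}}\) of proposition~\ref{prop:bounded}. So it is enough to check the remaining hypothesis of Lemma~\ref{lem:cireglimext}: that each \(L_{F_n,b}\R\) is \(\m{S}_n\)\enhyp{}normal, where \(\m{S}_n\) is the algebra of smooth functions on \(L_{F_n,b}\R\) that extend to a smooth function on \(L_{H,b}\R\).

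I would first record some soft properties of \(\m{S}_n\). Its members, being restrictions of smooth functions on a Fr\'echet space, are continuous, so \(\m{S}_n\) is a subalgebra of \(C(L_{F_n,b}\R,\R)\); it is closed under post\hyp{}composition with any smooth map \(\R\to\R\) (if \(\tilde{f}\) extends \(f\) and \(h\in\Ci(\R,\R)\) then \(h\circ\tilde{f}\) extends \(h\circ f\)), so it obeys the side condition on the algebra \(\m{S}\) in theorem~\ref{th:cipara}; and it is invariant under translation by any element of \(L_{F_n,b}\R\subseteq L_{H,b}\R\), such a translation being a smooth self\hyp{}map of \(L_{H,b}\R\). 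Since \(L_{F_n,b}\R\) is separable and metrisable, hence Lindel\"of, theorem~\ref{th:cipara} shows that \(\m{S}_n\)\enhyp{}regularity implies \(\m{S}_n\)\enhyp{}paracompactness, whence also \(\m{S}_n\)\enhyp{}normality by a partition of unity argument; and by translation invariance \(\m{S}_n\)\enhyp{}regularity amounts to: for every \(0\)\enhyp{}neighbourhood \(U\) in \(L_{F_n,b}\R\) there is \(f\in\m{S}_n\) with \(f(0)=1\) and \(\supp f\subseteq U\).

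For this last point the idea is to use Sobolev\hyp{}type quadratic forms, which extend to the whole inductive limit even though a generic smooth bump function on \(L_{F_n,b}\R\) does not (the inclusions above are not complemented, and, just as for the example in \cite[V.21.5ff]{akpm}, not every smooth function extends). For \(m\in\N\) set \(Q_m(\gamma)\coloneqq\int_{S^1}(\gamma_\pm^{(m)})^2+\int_{S^1}\gamma^2\); this is defined for \(\gamma\in L_{H,b}\R\) since the integrands are defined off finitely many points and bounded there. The associated symmetric bilinear form \(B_m(\gamma,\delta)=\int_{S^1}\gamma_\pm^{(m)}\delta_\pm^{(m)}+\int_{S^1}\gamma\delta\) is dominated on each \(L_{G,b}\R\) by a product of continuous seminorms, so it carries bounded subsets of \(L_{G,b}\R\times L_{G,b}\R\) to bounded sets; as every bounded subset of \(L_{H,b}\R\) lies in some \(L_{G,b}\R\) by proposition~\ref{prop:bounded}, where the subspace topology is the natural one by proposition~\ref{prop:subtop}, \(B_m\) is a bounded bilinear map on \(L_{H,b}\R\times L_{H,b}\R\), hence smooth \cite{akpm}. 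Composing with the diagonal shows \(Q_m\) is smooth on \(L_{H,b}\R\), and its restriction to \(L_{F_n,b}\R\) is the obvious quadratic form there. Given a basic \(0\)\enhyp{}neighbourhood \(\{\gamma:\abs{\gamma^{(k)}(t)}<\delta\text{ for }t\in S^1\ssetminus F_n,\ 0\le k\le m'\}\), take \(m=m'+1\); the standard one\hyp{}dimensional estimate \(\|\gamma\|_{C^{m'}(\bar{I})}^2\le C_I(\int_I(\gamma^{(m)})^2+\int_I\gamma^2)\) on each of the finitely many intervals \(I\) comprising \(S^1\ssetminus F_n\) shows that \(\{\gamma:Q_m(\gamma)\le R\}\) is contained in the given neighbourhood once \(R\) is small enough. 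Then for \(\phi\in\Ci(\R,[0,1])\) with \(\phi(0)=1\) and \(\supp\phi\subseteq(-1,1)\), the function \(f=\phi\circ(Q_m/R)\) lies in \(\m{S}_n\), satisfies \(f(0)=1\), and has \(\supp f\subseteq U\). This gives \(\m{S}_n\)\enhyp{}regularity.

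Combining these: each \(L_{F_n,b}\R\) is \(\m{S}_n\)\enhyp{}regular and Lindel\"of, hence \(\m{S}_n\)\enhyp{}paracompact and so \(\m{S}_n\)\enhyp{}normal by theorem~\ref{th:cipara}; Lemma~\ref{lem:cireglimext} then gives that \(L_{H,b}\R\) is \(\Ci\)\enhyp{}regular, i.e.\ smoothly regular. I expect the construction of the extendable bump functions to be the only real obstacle; the decisive point, which the discussion preceding Lemma~\ref{lem:cireglimext} shows cannot be side\hyp{}stepped, is that the Sobolev quadratic forms are \emph{bounded}, and it is proposition~\ref{prop:bounded} that turns this boundedness into smoothness on the full space \(L_{H,b}\R\) rather than merely on the individual Fr\'echet pieces.
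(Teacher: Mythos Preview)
Your argument is correct and follows the same architecture as the paper's proof: present \(L_{H,b}\R\) as a strict inductive limit of the sequence \(L_{F_n,b}\R\), reduce via Lemma~\ref{lem:cireglimext} to showing each step is \(\m{S}_n\)\enhyp{}normal, and obtain that from \(\m{S}_n\)\enhyp{}regularity using theorem~\ref{th:cipara} (having checked \(\m{S}_n\) is stable under post\hyp{}composition with smooth maps). The one genuine difference is in how the extendable bump functions are manufactured. The paper exploits the fact, already established for countable \(H\), that \(L_{H,b}\R\) is nuclear: its locally convex topology is therefore generated by Hilbertian semi\hyp{}norms \(p\), whose squares are smooth on \(L_{H,b}\R\) by \cite[III.13.10]{akpm}; given a \(0\)\enhyp{}neighbourhood \(U\) in \(L_{F_n,b}\R\), one picks a \(0\)\enhyp{}neighbourhood \(V\) in \(L_{H,b}\R\) with \(V\cap L_{F_n,b}\R\subseteq U\), a Hilbertian semi\hyp{}norm with \(p^{-1}([0,1])\subseteq V\), and composes \(p^2\) with a bump function on \(\R\). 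You instead write down explicit Sobolev quadratic forms \(Q_m\), prove them smooth on the whole of \(L_{H,b}\R\) via the bounded\enhyp{}bilinear criterion together with propositions~\ref{prop:subtop} and~\ref{prop:bounded}, and then use the one\hyp{}dimensional Sobolev embedding \(H^{m'+1}\hookrightarrow C^{m'}\) on each arc of \(S^1\ssetminus F_n\) to fit a sublevel set inside the prescribed neighbourhood.

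Both routes work. The paper's is shorter once nuclearity is in hand and makes transparent why countability matters (it enters through nuclearity, which fails for uncountable \(H\)). Yours is more self\hyp{}contained\emhyp{}it does not appeal to the nuclearity result\emhyp{}at the cost of importing a Sobolev estimate; it also makes explicit that the obstruction for uncountable \(H\) is not the existence of globally smooth quadratic forms (your \(Q_m\) are smooth regardless of \(H\)) but the reduction to a \emph{sequence} needed for Lemma~\ref{lem:cireglimext}.
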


\begin{proof}
 We will use lemma~\ref{lem:cireglimext}.
 To do so we need to show that \(L_{F,b}\R\) is \(\m{S}_F\)\enhyp{}normal where \(\m{S}_F\) is the algebra of smooth functions which extend to smooth functions on \(L_{H,b}\R\).
 We start by showing that it is \(\m{S}_F\)\enhyp{}regular.
 From this we will use \cite[III.16.10]{akpm} to deduce that it is \(\m{S}_F\)\enhyp{}paracompact, whence from \cite[III.16.2]{akpm} it is \(\m{S}_F\)--normal as required.

 The \(\ci\)\enhyp{}topology on \(L_{F,b}\R\) agrees with the locally convex topology as it is a Fr\'echet space, \cite[I.4.11]{akpm}.
 We have already shown that this is the topology inherited by \(L_{F,b}\R\) from its inclusion in \(L_{H,b} \R\) where the latter is given its locally convex topology.
 This topology is nuclear and so is defined by Hilbertian semi\hyp{}norms.
 The square of such a norm is smooth by \cite[III.13.10]{akpm}.
 Let \(U\) be a \(0\)\enhyp{}neighbourhood in \(L_{F,b}\R\), then there is some \(0\)\enhyp{}neighbourhood \(V\) in \(L_{H,b}\R\) with
  \(V \cap L_{F,b}\R \subseteq U\).
 We can thus find a Hilbertian semi\hyp{}norm
  \(p \colon L_{H,b}\R \to \R\)
 such that
  \(p^{-1}([0,1]) \subseteq V\).
 Composition of \(p^2\) with a suitable bump function on \R results in a smooth function
  \(f \colon L_{H,b} \R \to \R\)
 with support in \(V\).
 The restriction of \(f\) to \(L_{F,b}\R\) is thus in \(\m{S}_F\) and has support in \(U\).
 Hence \(L_{F,b}\R\) is \(\m{S}_F\)\enhyp{}regular.

 To apply \cite[III.16.10]{akpm} we need to show that \(\m{S}_F\) satisfies the required condition, namely that for each \(g \in \m{S}_F\) there exists an
  \(h \colon \R \to [0,1]\)
 with
  \(h \circ g \in \m{S}_F\),
  \(h(t) = 0\) for \(t \le 0\), and \(h(t) = 1\) for \(t \ge 1\).
 We will actually show that if \(f \in \m{S}_F\) and \(h \in \Ci(\R,\R)\) then
  \(h_* f \coloneqq h \circ f \in \m{S}_F\).
 Let \(f \in \m{S}_F\) and \(h \in \Ci(\R,\R)\); as \(f\) is smooth,
  \(h_* f \colon L_{F,b}\R \to \R\)
 is smooth, therefore we just need to show that it extends to a smooth function on \(L_{H,b}\R\).
 Let
  \(\tilde{f} \colon L_{H,b}\R \to \R\)
 be an extension of \(f\), then \(h_* \tilde{f}\) is an extension of \(h_* f\).

 We now deduce that \(L_{F,b}\R\) is \(\m{S}_F\)\enhyp{}paracompact, whence \(\m{S}_F\)\enhyp{}normal.
 Hence by lemma~\ref{lem:cireglimext} \(L_{H,b}\R\) is \(\Ci\)\enhyp{}regular.
\end{proof}

\begin{corollary}
 Let \(H \subseteq S^1\) be countable, then \(L_{H,b} \R\) is \(\Ci\)\enhyp{}paracompact.
\end{corollary}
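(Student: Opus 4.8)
The plan is to assemble this corollary from the two preceding results—the smooth regularity established in Proposition~\ref{prop:smreg} and the Lindel\"of property of the \(\ci\)\enhyp{}topology noted in the corollary just above—and then feed these into Theorem~\ref{th:cipara} (that is, \cite[III.16.10]{akpm}) with \(\m{S} = \Ci(L_{H,b}\R,\R)\).

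First I would record that, for \(H\) countable, \(L_{H,b}\R\) with its \(\ci\)\enhyp{}topology is Lindel\"of; this is exactly the corollary proved immediately above, whose proof runs as in the locally convex case. Next I would invoke Proposition~\ref{prop:smreg} to the effect that \(L_{H,b}\R\) is \(\Ci\)\enhyp{}regular, i.e.\ \(\m{S}\)\enhyp{}regular for \(\m{S} = \Ci(L_{H,b}\R,\R)\). Then I would check that this \(\m{S}\) satisfies the side condition demanded by Theorem~\ref{th:cipara}: for each \(g \in \m{S}\) there is an \(h \colon \R \to [0,1]\) with \(h \circ g \in \m{S}\), \(h(t) = 0\) for \(t \le 0\), and \(h(t) = 1\) for \(t \ge 1\). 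This is immediate, since \(\Ci(L_{H,b}\R,\R)\) is closed under post\hyp{}composition with smooth maps \(\R \to \R\) and one may take \(h\) to be a fixed smooth ``step'' function into \([0,1]\). With the hypotheses of Theorem~\ref{th:cipara} verified, we conclude that \(L_{H,b}\R\) is \(\Ci\)\enhyp{}paracompact.

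The only point requiring any care—and the nearest thing to an obstacle—is bookkeeping about which topology is in play. Theorem~\ref{th:cipara} asks that \(X\) be Lindel\"of and \(\m{S}\)\enhyp{}regular, and both must be read with respect to the \(\ci\)\enhyp{}topology, which is the topology in which \(\Ci\)\enhyp{}paracompactness is asserted. The Lindel\"of statement we quote is precisely for the \(\ci\)\enhyp{}topology, and \(\Ci\)\enhyp{}regularity concerns separating points from \(\ci\)\enhyp{}closed sets by smooth functions, so the hypotheses match without further adjustment.
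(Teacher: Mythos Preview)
Your proposal is correct and follows exactly the paper's own approach: combine the Lindel\"of property of the \(\ci\)\enhyp{}topology with the \(\Ci\)\enhyp{}regularity from Proposition~\ref{prop:smreg} and apply Theorem~\ref{th:cipara}. The paper's proof is a single sentence to this effect; your additional checks (the side condition on \(\m{S}\) and the topology bookkeeping) are sound but not strictly needed here, since for \(\m{S} = \Ci(L_{H,b}\R,\R)\) the side condition is immediate.
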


\begin{proof}
 As it is Lindel\"of and \(\Ci\)\enhyp{}regular, we can apply \cite[III.16.10]{akpm}.
\end{proof}

\subsubsection{Uncountability}

In proving that \(L_{H,b} \R\) is \(\Ci\)\enhyp{}paracompact we have used the countability of \(H\) at almost every stage.
We therefore cannot readily adapt it to uncountable \(H\), specifically to the case \(H = S^1\).
Thus we are forced to leave open the question as to whether or not \(L_{\psb} \R\) itself is smoothly paracompact.

We conjecture that \(L_{\psb} \R\) is not even \(\Ci\)\enhyp{}regular.
Let us explain the rationale behind this conjecture.

Recall that the direct sum \(\sum_H \R\) sits inside \(L_{\psb} \R\) as a splitting subspace.
Therefore any results which hold for \(L_{\psb} \R\) will hold for \(\sum_H \R\).
On the other hand, the space \(\R^\N\) is well\hyp{}behaved with respect to the theory of smooth spaces and so there is no reason to doubt that a technique that works for \(\sum_H \R\) will fail for \(\sum_H \R^\N\).
Similarly, although the extension
\[
  L \R \to L_{H,b} \R \to \sum_H \R^\N
\]
does not split, one would expect a general construction for \(\sum_H \R^\N\) to be modifiable to work for \(L_{H,b} \R\).

Thus a negative result for \(\sum_H \R\) certainly implies the corresponding negative for \(L_{H,b} \R\), whilst it seems reasonable that a positive result for \(\sum_H \R\) could be adapted to one for \(L_{H,b} \R\).
Thus we may turn our attention to \(\sum_H \R\), at least for the purposes of this discussion.

Here we see the difference between countable and uncountable index sets.
For \(H\) countable, \(\sum_H \R\) is nuclear and thus its locally convex topology is determined by a family of Hilbertian semi\hyp{}norms.
It is easy to build smooth bump functions from these and this is what we used in proposition~\ref{prop:smreg}.
However for \(H\) uncountable, this is no longer the case.
The locally convex topology on \(\sum_H \R\) for \(H\) uncountable is not given by Hilbertian semi\hyp{}norms but rather by norms equivalent to that on \(\ell^1(H)\).
As \(H\) is uncountable, such a norm is nowhere even G\^ateaux differentiable, see \cite[III 13.11]{akpm}.
Indeed, for any infinite \(H\), \(\ell^1(H)\) is not even \(C^1\)\enhyp{}regular, \cite[III 14.9]{akpm}.

What remains is to show that if \(\sum_H \R\) is \(\Ci\)\enhyp{}regular then a suitable smooth bump function extends to some Banach completion of \(\sum_H \R\).
The discussion at the start of \cite[III 13]{akpm} is relevant here.
There the issue of smooth semi\hyp{}norms is discussed and reason is given for considering only Banach spaces because any semi\hyp{}norm on a locally convex topological vector space defines an associated Banach space.
However if the original semi\hyp{}norm is smooth it may not be the case that its extension is everywhere smooth.

An alternative approach would be to attempt a smooth version of the Hahn\enhyp{}Banach theorem.
One could view proposition~\ref{prop:smreg} as the separable case.
By analogy, one would attempt to extract from proposition~\ref{prop:smreg} and from lemma~\ref{lem:cireglimext} the ``one\hyp{}step extension lemma'' crucial to the proof of the full Hahn\enhyp{}Banach theorem.
However, there are difficulties with showing that the resulting family satisfies the requirements of Zorn's Lemma and so adapting the proof of the Hahn\enhyp{}Banach theorem is not a simple task.

In the light of the other difficulties with the spaces of piecewise\hyp{}smooth and bounded loops\emhyp{}as detailed in the next section\emhyp{}we defer settling this conjecture to a later date.

\section{The Diffeomorphism Group}
\label{sec:diff}

In this section we examine the action of the diffeomorphism group of the circle.
This acts on the space of piecewise\hyp{}smooth bounded loops by precomposition.
We shall see that this action is fairly bad, both in terms of the continuity of the map:
\[
 \smth(S^1) \to \m{L}(L_{\psb} \R)
\]
and in terms of the continuity of the maps:
\[
 \smth(S^1) \to L_{\psb} \R, \quad \sigma \mapsto \alpha \circ \sigma
\]
for a fixed \(\alpha\).

The action does not become any nicer when restricted to the circle, acting by rigid rotation.
Therefore we also consider the possibility of improving the circle action.
However, we find that improving the circle action leads to a considerable worsening of the topology and we doubt whether the trade\hyp{}off is worthwhile.

\subsection{The Action of the Diffeomorphism Group}

There are many different topologies that one might wish to put on the space of continuous linear maps of a locally convex topological vector space.
As we are expecting negative results we shall use what is known as the \emph{weak} or \emph{simple} topology.
This is the coarsest topology that one would sanely think of using.
Our negative results will therefore propagate backwards to any other sensible topology.

Let \(E\) be a locally convex topological vector space.
Let \(\m{L}(E)\) be the space of continuous linear maps from \(E\) to itself.
The weak topology on \(\m{L}(E)\) is the topology of pointwise convergence, or uniform convergence on finite sets.
To define a \(0\)\enhyp{}basis for this topology, for \(X,Y \subseteq E\), let
 \(N(X,Y) \subseteq \m{L}(E)\)
be the set:
\[
 N(X,Y) \coloneqq \{T \in \m{L}(E) : T(X) \subseteq Y\}.
\]
Then a \(0\)\enhyp{}basis for the weak topology on \(\m{L}(E)\) is the family of those \(N(X,U)\) with \(X\) finite and \(U\) a \(0\)\enhyp{}neighbourhood.
We write \(\m{L}_s(E)\) when we wish to emphasise that we are considering \(\m{L}(E)\) with the weak, or simple, topology.

This topology is closely related to the notion of \emph{separate continuity}.
For topological spaces \(X,Y,Z\) a map
 \(f \colon X \times Y \to Z\)
is separately continuous if the maps \(x \mapsto f(x,y_0)\) and \(y \mapsto f(x_0,y)\) are continuous for all \(x_0 \in X\) and \(y_0 \in Y\).
We are interested in the special case where \(Y = Z = E\) and the maps \(y \to f(x_0,y)\) are continuous and linear.
We therefore have an induced map
 \(f^\lor \colon X \to \m{L}(E)\)
and, under these conditions, it is easy to see that the separate continuity of \(f\) is equivalent to the continuity of \(f^\lor\) with the weak topology on the target.

Let us return to the diffeomorphism group acting on piecewise\hyp{}smooth bounded loops.

\begin{proposition}
 Let
  \(\sigma \colon S^1 \to S^1\)
 be a diffeomorphism.
 Let \(H \subseteq S^1\) be a subset.
 The induced map
  \(\sigma^* \colon \gamma \mapsto \gamma \circ \sigma\)
 is a linear homeomorphism from \(L_{\sigma(H),b} \R\) onto \(L_{H,b} \R\).
\end{proposition}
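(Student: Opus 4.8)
The plan is to verify three things in turn: that $\sigma^*$ is well-defined (i.e.\ it maps $L_{\sigma(H),b}\R$ into $L_{H,b}\R$), that it is a linear bijection with inverse $(\sigma^{-1})^*$, and that both $\sigma^*$ and its inverse are continuous. Since continuity of $(\sigma^{-1})^*$ follows from continuity of $\sigma^*$ applied to the diffeomorphism $\sigma^{-1}$, the substance reduces to well-definedness and continuity of $\sigma^*$ for an arbitrary diffeomorphism.

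For well-definedness, let $\gamma \in L_{\sigma(H),b}\R$. Then $\gamma$ is continuous with a finite break set $F' \subseteq \sigma(H)$, so $\gamma \circ \sigma$ is continuous with break set contained in $\sigma^{-1}(F') \subseteq H$, which is again finite. Moreover $\sigma$ is a diffeomorphism, so $\gamma \circ \sigma$ is smooth on $S^1 \ssetminus \sigma^{-1}(F')$; and by the chain rule the derivatives of $\gamma \circ \sigma$ are polynomial expressions in the derivatives of $\gamma$ (evaluated at $\sigma(t)$) and the derivatives of $\sigma$, the latter being bounded since $S^1$ is compact and $\sigma$ is smooth with nowhere-vanishing derivative. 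By lemma~\ref{lem:limits} it suffices to observe that the left and right limits of every derivative of $\gamma \circ \sigma$ exist at each point, which follows from the Fa\`a di Bruno formula together with the existence of one-sided limits of the $\gamma^{(k)}$ (again lemma~\ref{lem:limits}) and the smoothness of $\sigma$. Hence $\gamma \circ \sigma \in L_{H,b}\R$, and linearity of $\gamma \mapsto \gamma \circ \sigma$ is immediate. Applying the same argument to $\sigma^{-1}$ shows $(\sigma^{-1})^*$ is a well-defined linear map $L_{H,b}\R \to L_{\sigma(H),b}\R$, and clearly $(\sigma^{-1})^* \circ \sigma^* = \mathrm{id}$ and $\sigma^* \circ (\sigma^{-1})^* = \mathrm{id}$, so $\sigma^*$ is a linear bijection.

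For continuity, I would exploit the inductive limit structure: $L_{H,b}\R$ is the inductive limit of the $L_{F,b}\R$ over $F \in \m{F}(H)$, so by the universal property it suffices to show that for each finite $F \subseteq H$ the composite $L_{F,b}\R \hookrightarrow L_{\sigma(H),b}\R \xrightarrow{\;\sigma^*\;}$ lands in (and is continuous into) $L_{H,b}\R$. In fact $\sigma^*$ carries $L_{\sigma(F),b}\R$ bijectively onto $L_{F,b}\R$, and on these Fr\'echet spaces continuity is easy to check directly from the seminorms $U(n,\epsilon)$: given $n$ and $\epsilon$, the chain rule bounds $\sup\{|(\gamma\circ\sigma)_\pm^{(k)}(t)| : t \in S^1,\ 0 \le k \le n\}$ by a constant (depending only on $n$ and on the sup-norms of $\sigma',\dots,\sigma^{(n)}$, which are finite by compactness) times $\sup\{|\gamma_\pm^{(j)}(s)| : s \in S^1,\ 0 \le j \le n\}$, so $\sigma^*$ maps a suitable $U(n,\delta)$ into $U(n,\epsilon)$. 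Composing with the continuous inclusion $L_{F,b}\R \to L_{H,b}\R$ and invoking the universal property of the inductive limit then gives continuity of $\sigma^* \colon L_{\sigma(H),b}\R \to L_{H,b}\R$. Running the argument with $\sigma$ replaced by $\sigma^{-1}$ shows the inverse is continuous too, so $\sigma^*$ is a linear homeomorphism.

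The only mildly delicate point is the bookkeeping in the chain rule near breaks: one must be sure that the one-sided limits of derivatives behave correctly under precomposition, i.e.\ that $(\gamma \circ \sigma)^{(k)}_\pm(t)$ is obtained from the Fa\`a di Bruno expansion with $\gamma^{(j)}$ replaced by its appropriate one-sided limit at $\sigma(t)$ (and the sign of the limit matched to the sign of $\sigma'$, which is constant since $\sigma$ is a diffeomorphism of $S^1$). This is the main thing to get right; everything else is routine. I would state this one-sided chain rule as the key observation and then let the seminorm estimate and the inductive-limit universal property do the rest.
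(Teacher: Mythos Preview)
Your proposal is correct and follows essentially the same route as the paper: well-definedness via the chain rule (Fa\`a di Bruno) and the one-sided limit characterisation, continuity established first on the Fr\'echet pieces $L_{F,b}\R$ by a seminorm estimate, then extended by the universal property of the inductive limit, with the inverse handled symmetrically via $\sigma^{-1}$. One small slip: in the sentence invoking the universal property you write ``$L_{F,b}\R \hookrightarrow L_{\sigma(H),b}\R$'' for $F \subseteq H$, which doesn't typecheck---you evidently mean $L_{\sigma(F),b}\R \hookrightarrow L_{\sigma(H),b}\R$, as your next sentence makes clear.
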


\begin{proof}
 Let
  \(\gamma \in L_{F,b} \R\).
 As \(\gamma\) and \(\sigma\) are continuous, \(\gamma \circ \sigma\) is continuous.
 Since \(\gamma\) is smooth on \(S^1 \ssetminus F\), \(\gamma \circ \sigma\) is smooth on
  \(\sigma^{-1}(S^1 \ssetminus F)\).
 As \(\sigma\) is a bijection,
  \(\sigma^{-1}(S^1 \ssetminus F) = S^1 \ssetminus \sigma^{-1}(F)\)
 and \(\sigma^{-1}(F)\) is a finite subset of
  \(\sigma^{-1}(\sigma(H)) = H\).
 Hence \(\gamma \circ \sigma\) is piecewise\hyp{}smooth with breaks in \(\sigma^{-1}(F)\).

 As \(\sigma\) is a diffeomorphism on \(S^1\), each derivative is bounded.
 For each \(k \in \N\) let
  \(m_k \coloneqq \sup\{\abs{\sigma^{(j)}(t)} : t \in S^1, 0 \le j \le k\}\).
 From Fa\`a di Bruno's formul\ae\ for the chain rule for higher derivatives we see that there is some constant \(N_k\) depending only on \(k\) such that for
  \(t \in S^1 \ssetminus \sigma^{-1}(F)\):
 \[
  \abs{(\gamma \circ \sigma)^{(k)}(t)} \le N_k m_k \max\{ \abs{\gamma^{(j)}(\sigma(t))} : 0 \le j \le k\}.
 \]
 Hence:
 \begin{gather*}
  \sup\{\abs{(\gamma \circ \sigma)^{(j)}(t)} : t \in S^1 \ssetminus \sigma^{-1} (F), 0 \le j \le k\} \\
  \le N_k m_k \sup\{\abs{\gamma^{(j)}(t)} : t \in S^1 \ssetminus F, 0 \le j \le k\}.
 \end{gather*}
 Thus the derivatives of \(\gamma \circ \sigma\) are bounded on their domains of definition and so \(\gamma \circ \sigma\) is piecewise\hyp{}smooth and bounded.
 Moreover, as the left and right limits exist we have:
 \begin{gather*}
  \sup\{\abs{(\gamma \circ \sigma)_\pm^{(j)}(t)} : t \in S^1, 0 \le j \le k\} \\
  \le N_k m_k \sup\{\abs{\gamma_\pm^{(j)}(t)} : t \in S^1, 0 \le j \le k\}.
 \end{gather*}
 whence the map
  \(L_{F,b} \R^n \to L_{\sigma^{-1}(F),b} \R^n\),
  \(\gamma \mapsto \gamma \circ \sigma\),
 is continuous.
 As its inverse is
  \(\gamma \mapsto \gamma \circ \sigma^{-1}\)
 it is therefore a linear homeomorphism.

 Using the characterisation of \(L_{H,b} \R^n\) as an inductive limit we deduce that \(\sigma\) induces a linear homeomorphism of \(L_{\sigma(H),b} \R^n\)
 onto \(L_{H,b} \R^n\) for any \(H\).
\end{proof}

We shall shortly see that this is the best statement that can be made about this action.
To continue our analysis we need some suitable open sets in \(L_{\psb}\R\).
We will use these to define open sets of the form \(N(\{\alpha\}, V)\) which will separate our diffeomorphisms.
Let
 \(\sign \colon \R \ssetminus \{0\} \to \{-1, 1\}\)
be the sign function,
 \(\sign(x) = x / \abs{x}\).

\begin{lemma}
 \label{lem:sign}
 Let
  \(W \subseteq L_{\psb}\R\)
 be the set of all piecewise\hyp{}smooth loops with first derivative bounded away from \(0\) (on its domain of definition).
 Define a relation on \(W\) by: \(\alpha \sim \beta\) if
  \(\sign(\alpha'(t)) = \sign(\beta'(t))\)
 for all \(t\) where both sides are defined.
 Define another relation by \(\alpha \approx \beta\)
 if \(\alpha \sim \beta\) or \(\alpha \sim - \beta\).
 Let
  \(\sigma \colon S^1 \to S^1\)
 be a diffeomorphism.
 \begin{enumerate}
 \item
   \(W\) is a non\hyp{}empty \(\sigma^*\)\enhyp{}invariant open set.

 \item
  The relations \(\sim\) and \(\approx\) are equivalence relations.

 \item
  The equivalence classes of \(\sim\) are open convex cones.
  Each equivalence class of \(\approx\) is the union of two equivalence classes of \(\sim\).

 \item
  The equivalence classes of \(\sim\) are indexed by two copies of the family of finite subsets of \(S^1\) of even size.
  Those of \(\approx\) are indexed by the family of finite subsets of \(S^1\) of even size.

 \item
  The diffeomorphism \(\sigma\) induces a permutation on the equivalence classes of \(\sim\) and \(\approx\).
  If \(V(F)\) is the \(\approx\)\enhyp{}equivalence class corresponding to a finite subset \(F \subseteq S^1\) then
   \(\sigma^* V(F) = V(\sigma^{-1}(F))\).
 \end{enumerate}
\end{lemma}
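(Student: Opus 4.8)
The plan is to attach to each \(\alpha\in W\) the \emph{sign pattern} of its derivative and to read off every assertion from elementary properties of such patterns.

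\emph{Setting up the sign pattern.}
For \(\alpha\in W\) fix \(\epsilon_\alpha>0\) with \(\abs{\alpha'(t)}\ge\epsilon_\alpha\) on \(S^1\ssetminus F(\alpha)\), the notation \(F(\gamma)\) for the break set being as in proposition~\ref{prop:bounded}. Since \(\alpha'\) is continuous and non-vanishing on each component of \(S^1\ssetminus F(\alpha)\), the intermediate value theorem shows \(\sign\circ\alpha'\) is constant there; and by lemma~\ref{lem:limits} the one-sided limits \(\alpha'_\pm\) exist everywhere with \(\abs{\alpha'_\pm(t)}\ge\epsilon_\alpha\), so \(s_{\alpha,\pm}\coloneqq\sign\circ\alpha'_\pm\colon S^1\to\{-1,1\}\) are defined, \(s_{\alpha,+}\) being right-continuous, \(s_{\alpha,-}\) left-continuous, each with only finitely many jumps, and with \(s_{\alpha,+}=s_{\alpha,-}\) off the finite set \(F(\alpha)\). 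The linchpin is that \(\alpha\sim\beta\) \emph{if and only if} \(s_{\alpha,+}\equiv s_{\beta,+}\): two right-continuous \(\{-1,1\}\)-valued step functions that agree off the finite set \(F(\alpha)\cup F(\beta)\) agree everywhere, since on a short enough right-interval of any point both are constant and equal their value there, and such an interval meets the cofinite set \(S^1\ssetminus(F(\alpha)\cup F(\beta))\). Granting this, \(\sim\) is equality of \(s_{\alpha,+}\), and, as \(-\beta\in W\) with \(s_{-\beta,+}=-s_{\beta,+}\), the relation \(\approx\) is equality of the unordered pair \(\{s_{\alpha,+},-s_{\alpha,+}\}\); both are therefore equivalence relations, which is (2).

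\emph{Openness, non-emptiness, invariance, cones.}
Given \(\alpha\in W\), let \(U\subseteq L_{\psb}\R\) be a \(0\)-neighbourhood with \(\abs{\gamma'_\pm(t)}<\tfrac12\epsilon_\alpha\) for every \(t\in S^1\) and every \(\gamma\in U\) (the basic neighbourhood bounding the first derivative will do). For such \(\gamma\) we get, at every \(t\), \(\abs{(\alpha+\gamma)'_\pm(t)}\ge\abs{\alpha'_\pm(t)}-\abs{\gamma'_\pm(t)}>\tfrac12\epsilon_\alpha\) and \(\sign((\alpha+\gamma)'_\pm(t))=\sign(\alpha'_\pm(t))\); hence \(\alpha+\gamma\in W\) with \(s_{\alpha+\gamma,+}\equiv s_{\alpha,+}\), so \(\alpha+U\subseteq[\alpha]_\sim\subseteq W\). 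Thus \(W\) and every \(\sim\)-class are open. \(W\) is non-empty, the piecewise-linear loop with slope \(+1\) on \((0,\tfrac12)\) and \(-1\) on \((\tfrac12,1)\) lying in it. For \(\sigma^*\)-invariance, \(\sigma^*\) is a linear automorphism of \(L_{\psb}\R\) by the preceding proposition and \((\gamma\circ\sigma)'=(\gamma'\circ\sigma)\,\sigma'\) with \(\abs{\sigma'}\) bounded below on the compact \(S^1\), so \(\sigma^*(W)\subseteq W\); applying this also to \(\sigma^{-1}\) gives \(\sigma^*W=W\). For the cone property: if \(\alpha\sim\beta\) and \(\lambda,\mu>0\) then on \(S^1\ssetminus(F(\alpha)\cup F(\beta))\) the terms \(\lambda\alpha'\) and \(\mu\beta'\) have the common sign \(s_{\alpha,+}\), so \(\abs{(\lambda\alpha+\mu\beta)'}=\lambda\abs{\alpha'}+\mu\abs{\beta'}\ge\lambda\epsilon_\alpha\) there, hence also, by continuity, at the remaining finitely many points; thus \(\lambda\alpha+\mu\beta\in W\) with \(s_{\lambda\alpha+\mu\beta,+}\equiv s_{\alpha,+}\), i.e. \(\lambda\alpha+\mu\beta\in[\alpha]_\sim\). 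Finally the \(\approx\)-class of \(\alpha\) is \([\alpha]_\sim\cup[-\alpha]_\sim\), a union of two \emph{distinct} \(\sim\)-classes since \(s_{\alpha,+}\equiv-s_{\alpha,+}\) is impossible; this is (3).

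\emph{The indexing and the action.}
To \(\alpha\in W\) attach its \emph{jump set} \(J(\alpha)\coloneqq\{t:s_{\alpha,+}(t)\ne s_{\alpha,-}(t)\}\), a subset of \(F(\alpha)\) and so finite. Traversing \(S^1\) once, \(s_{\alpha,+}\) flips sign exactly at the points of \(J(\alpha)\) and returns to its start, so \(\abs{J(\alpha)}\) is even; also \(J(\alpha)\ne\emptyset\), for a loop cannot have a globally one-signed derivative (integrate around \(S^1\)). By the linchpin \([\alpha]_\sim\) is determined by \(s_{\alpha,+}\), which, being the alternating step function with jump set \(J(\alpha)\), is determined by \(J(\alpha)\) together with its value on a single one of the arcs of \(S^1\ssetminus J(\alpha)\); conversely any finite \(F\) of even size gives exactly two such functions, and each is realised by a piecewise-linear loop with slope \(+a\) on the arcs carrying \(+1\) and \(-b\) on those carrying \(-1\), with \(a,b>0\) chosen so the signed arc-lengths cancel. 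Hence \(\alpha\mapsto(J(\alpha),s_{\alpha,+})\) is a bijection of the \(\sim\)-classes with two copies of the family of finite subsets of \(S^1\) of even size, and \(\alpha\mapsto J(\alpha)\) is a bijection of the \(\approx\)-classes with that family (the empty set, though of even size, indexes no class); write \(V(F)\) for the \(\approx\)-class \(\{\alpha\in W:J(\alpha)=F\}\). This is (4). For (5), \(\sigma^*\) is a linear automorphism of \(L_{\psb}\R\) preserving \(W\), and since \(\sign\sigma'\) is a constant \(\varepsilon\in\{-1,1\}\) one computes \(s_{\gamma\circ\sigma,+}=s_{\gamma,+}\circ\sigma\) when \(\varepsilon=1\) and \(s_{\gamma\circ\sigma,+}=-\,(s_{\gamma,-}\circ\sigma)\) when \(\varepsilon=-1\); either way \(s_{\gamma\circ\sigma,+}\) depends only on \(s_{\gamma,+}\) and \(\sigma\), so \(\sigma^*\) carries \(\sim\)-classes to \(\sim\)-classes and, commuting with \(\gamma\mapsto-\gamma\), \(\approx\)-classes to \(\approx\)-classes, hence induces permutations of each. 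Moreover the jumps of \(s_{\gamma\circ\sigma,+}\) sit exactly at \(\sigma^{-1}(J(\gamma))\), so \(J(\sigma^*\gamma)=\sigma^{-1}(J(\gamma))\); applying this to every \(\gamma\) with \(J(\gamma)=F\), and noting that each element of \(V(\sigma^{-1}(F))\) is \(\sigma^*\) of such a \(\gamma\) (namely of \((\sigma^{-1})^*\) of it), we obtain \(\sigma^*V(F)=V(\sigma^{-1}(F))\).

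\emph{The main obstacle} is the one-sided-limit bookkeeping of the first paragraph — especially the linchpin that right-continuous \(\{-1,1\}\)-valued step functions agreeing off a finite set agree everywhere — which is what renders \(\sim\) and \(\approx\) tractable; the secondary point needing care is the parity-and-realisation step in (4), i.e. choosing the slopes of the piecewise-linear model so the loop closes up. The remaining assertions reduce to the neighbourhood estimates in \(L_{\psb}\R\) already used in this section together with the chain rule.
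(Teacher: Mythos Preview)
Your proof is correct and covers the same ground as the paper's, but the packaging differs in two places worth noting.

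First, where the paper verifies transitivity of \(\sim\) directly by a continuity argument on the cofinite domain, you front-load this into your ``linchpin'': \(\alpha\sim\beta\) iff the right-continuous sign functions \(s_{\alpha,+}\) and \(s_{\beta,+}\) coincide. This reformulation then gives (2), (3), and (4) almost for free, at the cost of the one-sided-limit bookkeeping you flag at the end. The paper's route is more elementary but more diffuse.

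Second, for (5) the paper argues topologically: the \(\sim\)-classes are open, closed (each being the complement in \(W\) of the others), and path-connected (cones), hence they are precisely the connected components of \(W\); since \(\sigma^*\) is a self-homeomorphism of \(W\) it permutes these components. Your argument instead computes \(s_{\gamma\circ\sigma,+}\) directly in terms of \(s_{\gamma,\pm}\circ\sigma\) and the orientation of \(\sigma\). Both work; the paper's argument is shorter and sidesteps the case-split on \(\sign\sigma'\), while yours is more explicit and does not rely on the cone property having been established.

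One point where you are sharper than the paper: you observe that \(J(\alpha)\ne\emptyset\) since a loop cannot have derivative of one sign. The paper's statement and proof of (4) read as though the empty set is among the indexing subsets, but your remark is correct---the empty set indexes no class.
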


\begin{proof}
 \begin{enumerate}
 \item
  Let \(\alpha \in W\).
  Then there is some \(M \ge 0\) such that
   \(\abs{\alpha'(t)} \ge M\)
  for all \(t\) where it is defined.
  Let
   \(\beta \in \alpha + U((1, M/2))\).
  Then
   \(\abs{\beta'(t) - \alpha'(t)} < M/2\)
  wherever both are defined.
  Hence
   \(\abs{\beta'(t)} \ge M/2\)
  for all \(t\) where both \(\beta'(t)\) and \(\alpha'(t)\) are defined.
  As this is a finite subset of the domain of definition of \(\beta'\) and \(\beta'\) is continuous on this domain, we must have
   \(\abs{\beta'(t)} \ge M/2\)
  wherever \(\beta'(t)\) is defined.
  Hence \(\beta \in W\).

  Again let \(\alpha \in W\).
  By the chain rule,
   \((\sigma^* \alpha)'(t) = \alpha'(\sigma(t)) \sigma'(t)\)
  wherever this is defined.
  As \(\sigma\) is a diffeomorphism, \(\sigma'(t)\) is bounded away from \(0\).
  By assumption, \(\alpha'(\sigma(t))\) is bounded away from zero.
  Hence
   \(\sigma^* \alpha \in W\).

 \item
  Start with the relation \(\sim\).
  Reflexivity and symmetry are straightforward.
  For transitivity the only difficulty is the domain of definition.
  Firstly note that for \(\alpha \in W\), \(\sign(\alpha'(t))\) is defined whenever \(\alpha'(t)\) is defined as it is bounded away from zero.
  Also
   \(t \mapsto \sign(\alpha'(t))\)
  is a locally constant function on the domain of \(\alpha'\), which is an open subset of \(S^1\).
  Now from \(\alpha \sim \beta\) and \(\beta \sim \gamma\) we readily deduce that
   \(\sign(\alpha'(t)) = \sign(\gamma'(t))\)
  for all but a finite number of points where both sides make sense.
  But then we can extend this to those points since both sides are continuous and constant in a neighbourhood of each missing point.
  Thus
   \(\sign(\alpha'(t)) = \sign(\gamma'(t))\)
  wherever both sides are defined.

  The properties of \(\approx\) follow almost immediately from those for \(\sim\).
  The only extra fact we need is the obvious one that \(\alpha \sim - \beta\) if and only if \(- \alpha \sim \beta\).

 \item
  To show that an equivalence class of \(\sim\) is open we need to show that for each \(\beta \in W\) there is an open neighbourhood \(V\) of \(\beta\) such that if \(\gamma \in V\) then \(\beta \sim \gamma\).
  So let \(\beta \in W\).
  Then there is some \(K > 0\) such that
   \(\abs{\beta'(t)} \ge K\)
  whenever it is defined.
  Let \(\gamma \in W\) be such that
   \(\beta - \gamma \in U((1,K))\).
  Then
   \(\abs{\beta'(t) -\gamma'(t)} < K\)
  whenever both are defined.
  Hence as
   \(\abs{\beta'(t)} \ge K\),
   \(\gamma'(t)\) and \(\beta'(t)\) have the same sign.
  Thus \(\beta \sim \gamma\).

  For the cone, let \(\lambda, \mu > 0\) and \(\alpha, \beta \in W\) be such that \(\alpha \sim \beta\).
  Then for all \(t\) where both are defined, \(\alpha'(t)\) and \(\beta'(t)\) are either both positive or both negative.
  Thus:
  \[
   \abs{\lambda \alpha'(t) + \mu \beta'(t)} = \lambda \abs{\alpha'(t)} + \mu \abs{\beta'(t)}.
  \]
  Since both \(\alpha'(t)\) and \(\beta'(t)\) are bounded away from \(0\) there is some \(K > 0\) such that
   \(\abs{\alpha'(t)} \ge K\)
  and
   \(\abs{\beta'(t)} \ge K\).
  Then:
  \[
   \abs{\lambda \alpha'(t) + \mu \beta'(t)} \ge (\lambda + \mu) K
  \]
  and
   \((\lambda + \mu) K > 0\).
  Hence
   \(\lambda \alpha + \mu \beta \in W\).
  Moreover,
   \(\lambda \alpha'(t) + \mu \beta'(t)\)
  has the same sign as, say, \(\alpha'(t)\) and thus
   \(\alpha \sim \lambda \alpha + \mu \beta\).

  Finally, it is obvious that the \(\approx\)\enhyp{}equivalence class of \(\alpha\) is the union of the \(\sim\)\enhyp{}equivalence classes of \(\alpha\) and \(-\alpha\).

 \item
  Let \(\alpha \in W\).
  The \(\sim\)\enhyp{}equivalence class of \(\alpha\) is clearly completely determined by the points in \(S^1\) where \(\alpha'\) changes sign.
  As \(\alpha'\) is bounded away from zero these points must be a subset of the breaks of \(\alpha\) and hence a finite subset of \(S^1\).
  Moreover, there must be an even number as we have to return to our starting point after a circuit of \(S^1\).
  For any finite subset of \(S^1\) of even size we can find an \(\alpha\) corresponding to this set.
  That there are two copies comes from the fact that \(\alpha\) and \(-\alpha\) have the same set of sign\hyp{}shifts.
  This doubling disappears when we consider the \(\approx\)\enhyp{}equivalence classes.

 \item
  The \(\sim\)\enhyp{}equivalence classes are open convex cones in \(W\).
  As they are cones, they are path\hyp{}connected.
  As they partition \(W\) each is the complement (in \(W\)) of the others whence is closed.
  Thus they are the connected components of \(W\).
  As \(\sigma^*\) is a self\hyp{}homeomorphism of \(L_{\psb}\R\) it induces a self\hyp{}homeomorphism of \(W\) and thus a permutation on the connected components.
  Moreover, as \(\sigma^*\) is linear we see that
   \(\sigma^* \alpha \sim - \sigma^* \beta\)
  if and only if \(\alpha \sim - \beta\) whence \(\sigma^*\) induces a permutation on the \(\approx\)\enhyp{}equivalence classes.

  By the chain rule,
   \((\sigma^* \alpha)'(t) = \alpha'(\sigma(t)) \sigma'(t)\).
  As \(\sigma\) is a diffeomorphism, \(\sigma'\) is mono\hyp{}signed.
  Therefore \((\sigma^* \alpha)'\) has a sign\hyp{}change at \(t_0\) if and only if \(\alpha'\) has a sign\hyp{}change at \(\sigma(t_0)\).
  Hence if \(V(F)\) is the \(\approx\)\enhyp{}equivalence class corresponding to \(F \subseteq S^1\),
   \(\sigma^* V(F) = V(\sigma^{-1}(F))\).
  \qedhere
 \end{enumerate}
\end{proof}

The \(\approx\)\enhyp{}equivalence classes are very useful open sets for examining the action of the diffeomorphism group.

\begin{proposition}
 \label{prop:disconnect}
 Let
  \(\m{A} \subseteq \smth(S^1)\)
 be a family of diffeomorphisms for which there is some finite \(F_0 \subseteq S^1\) of even size with the property that if
  \(\sigma, \tau \in \m{A}\)
 are such that
  \(\sigma^{-1}(F_0) = \tau^{-1}(F_0)\)
 then \(\sigma = \tau\).

 For any \(\alpha \in V(F_0)\) there is a family of pairwise disjoint open sets in \(E\) which covers the set
  \(\{\sigma^* \alpha : \sigma \in \smth(S^1)\}\)
 with the property that each member of this family contains at most one element of
  \(\{\sigma^* \alpha : \sigma \in \m{A}\}\).
\end{proposition}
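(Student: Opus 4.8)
The plan is to take as the required family the collection of $\approx$-equivalence classes $\{V(F) : F \subseteq S^1 \text{ finite of even size}\}$ constructed in Lemma~\ref{lem:sign}. Almost everything needed is already contained in that lemma, so the proof will mostly be a matter of assembling its parts in the right order.

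First I would check that the whole orbit of $\alpha$ stays inside $W$: since $\alpha \in V(F_0) \subseteq W$ and, by Lemma~\ref{lem:sign}(1), $W$ is $\sigma^*$-invariant for every $\sigma \in \smth(S^1)$, we get $\sigma^*\alpha \in W$ for all $\sigma$. By Lemma~\ref{lem:sign}(2), (3), (4), the relation $\approx$ is an equivalence relation on $W$ whose classes are precisely the sets $V(F)$ with $F$ a finite subset of $S^1$ of even size, and each such $V(F)$ is open in $L_{\psb}\R$. Being the classes of an equivalence relation they are pairwise disjoint, and their union is all of $W$. Hence $\{V(F)\}$ is a family of pairwise disjoint open sets whose union $W$ contains $\{\sigma^*\alpha : \sigma \in \smth(S^1)\}$, so in particular it covers that set.

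Next I would pin down which class each orbit element lands in. By Lemma~\ref{lem:sign}(5), $\sigma^* V(F_0) = V(\sigma^{-1}(F_0))$, so $\sigma^*\alpha \in V(\sigma^{-1}(F_0))$; and since the classes are pairwise disjoint, $\sigma^*\alpha \in V(F)$ holds if and only if $F = \sigma^{-1}(F_0)$. Now suppose $\sigma,\tau \in \m{A}$ and both $\sigma^*\alpha$ and $\tau^*\alpha$ lie in the same member $V(F)$ of the family. Then $\sigma^{-1}(F_0) = F = \tau^{-1}(F_0)$, and the defining property of $\m{A}$ forces $\sigma = \tau$, hence $\sigma^*\alpha = \tau^*\alpha$. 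Thus each $V(F)$ meets $\{\sigma^*\alpha : \sigma \in \m{A}\}$ in at most one point, which is exactly the claim.

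Since Lemma~\ref{lem:sign} already carries out all of the analysis — the openness and convexity of the sign cones and their behaviour under reparametrisation — I do not expect any real obstacle here; the argument is essentially a bookkeeping exercise. The points that will need a little care are: that $F_0$ has even size, so that $V(F_0)$ is genuinely one of the listed classes (and is where $\alpha$ is assumed to live); that one is free to use an uncountably large family, since nothing in the statement demands countability; and that one must invoke the disjointness of the $V(F)$ to upgrade the membership $\sigma^*\alpha \in V(\sigma^{-1}(F_0))$ to the equality $F = \sigma^{-1}(F_0)$. If it should happen that $V(F_0)$ is empty the statement is vacuous, so we may as well assume it is not.
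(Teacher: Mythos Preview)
Your proof is correct and follows essentially the same route as the paper: both take the family of $\approx$-equivalence classes from Lemma~\ref{lem:sign}, use $\sigma^*\alpha \in V(\sigma^{-1}(F_0))$ to locate each orbit element, and then invoke the hypothesis on $\m{A}$ for the ``at most one'' conclusion. The only cosmetic difference is that the paper restricts the family to $\{V(F) : \abs{F} = \abs{F_0}\}$ whereas you take all even-sized $F$; since $\sigma$ is a bijection the orbit only meets those $V(F)$ with $\abs{F}=\abs{F_0}$ anyway, so this changes nothing.
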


\begin{proof}
 Consider the family
  \(\m{V} \coloneqq \{V(F) : \abs{F} = \abs{F_0}\}\).
 These are pairwise disjoint open sets in \(E\).
 From lemma~\ref{lem:sign} we know that
  \(\sigma^* \alpha \in V(\sigma^{-1}(F_0))\)
 for
  \(\sigma \in \smth(S^1)\).
 As \(\sigma\) is a bijection,
  \(\abs{\sigma^{-1}(F_0)} = \abs{F_0}\)
 and hence
  \(V(\sigma^{-1}(F_0)) \in \m{V}\).
 Thus \(\m{V}\) covers the set
  \(\{\sigma^* \alpha : \sigma \in \smth(S^1)\}\))
 as required.

 Now if \(\sigma\) and \(\tau\) are distinct elements of \(\m{A}\) we know that
  \(\sigma^{-1}(F_0) \ne \tau^{-1}(F_0)\).
 Hence \(V(\sigma^{-1}(F_0))\) and \(V(\tau^{-1}(F_0))\) are distinct.
 Since
  \(\sigma^* \alpha \in V(\sigma^{-1}(F_0))\)
 we deduce that each element of \(\m{V}\) can contain at most one element of the set
  \(\{\sigma^* \alpha : \sigma \in \m{A}\}\).
\end{proof}

From this technical result we can determine just how bad is the action of the diffeomorphism group, and even of the circle acting by rigid rotations.

\begin{corollary}
 \begin{enumerate}
 \item
  Let \(\m{A}\), \(F_0\), and \(\alpha\) be as in the statement of proposition~\ref{prop:disconnect}.
  Then the set
   \(\{\sigma^* \alpha : \sigma \in \m{A}\}\)
  is discrete in \(L_{\psb}\R\).

 \item
  Let \(\m{A}\) satisfy the conditions of proposition~\ref{prop:disconnect}.
  Then the set
   \(\{\sigma^* : \sigma \in \m{A}\}\)
  is discrete in \(\m{L}_s(L_{\psb}\R)\).

 \item
  The set
   \(\{\sigma^* : \sigma \in \smth(S^1)\}\)
  is totally disconnected in \(\m{L}_s(L_{\psb}\R)\).

 \item
  For \(t \in S^1\) let
   \(\rho_t \colon S^1 \to S^1\)
  be rotation by \(t\).
  The family
   \(\{\rho_t : t \in S^1\}\)
  satisfies the conditions of proposition~\ref{prop:disconnect} for any non\hyp{}periodic finite subset \(F_0 \subseteq S^1\) of even size.
 \end{enumerate}
\end{corollary}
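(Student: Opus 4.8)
The plan is to derive all four parts from Proposition~\ref{prop:disconnect} and Lemma~\ref{lem:sign}, whose combined content I would package as: for every \(\sigma \in \smth(S^1)\) one has \(\sigma^*\alpha \in V(\sigma^{-1}(F_0))\) whenever \(\alpha \in V(F_0)\), and the sets \(V(F)\) with \(\abs{F}\) a fixed even number are pairwise disjoint, open, and — being the \(\approx\)-classes that partition the open set \(W\) — relatively clopen in \(W\). Granting this, parts (1), (2), and (4) are essentially bookkeeping, and only part (3) needs a small idea.

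For part (1) I would simply note that \(V(\sigma^{-1}(F_0))\) is an open neighbourhood of \(\sigma^*\alpha\) in \(L_{\psb}\R\) which, by Proposition~\ref{prop:disconnect}, meets \(\{\tau^*\alpha : \tau \in \m{A}\}\) only in \(\sigma^*\alpha\); hence every point of that set is isolated in it. For part (2), fix \(\sigma \in \m{A}\); since \(L_{\psb}\R\) is a topological vector space and \(V(\sigma^{-1}(F_0))\) is open and contains \(\sigma^*\alpha\), there is a \(0\)-neighbourhood \(U\) with \(\sigma^*\alpha + U \subseteq V(\sigma^{-1}(F_0))\). Then \(\sigma^* + N(\{\alpha\}, U)\) is a basic weak neighbourhood of \(\sigma^*\) in \(\m{L}_s(L_{\psb}\R)\), and if \(\tau \in \m{A}\) lies in it then \(\tau^*\alpha \in \sigma^*\alpha + U \subseteq V(\sigma^{-1}(F_0))\); but \(\tau^*\alpha \in V(\tau^{-1}(F_0))\), so disjointness of distinct members of the family forces \(\sigma^{-1}(F_0) = \tau^{-1}(F_0)\), whence \(\sigma = \tau\) by the hypothesis on \(\m{A}\). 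Thus \(\{\sigma^* : \sigma \in \m{A}\}\) is discrete.

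For part (3), given distinct \(\sigma, \tau \in \smth(S^1)\) I would first manufacture an even-sized \(F_0\) with \(\sigma^{-1}(F_0) \ne \tau^{-1}(F_0)\): choose \(p\) with \(\sigma^{-1}(p) \ne \tau^{-1}(p)\) (possible since \(\sigma^{-1} \ne \tau^{-1}\)), then \(q \notin \{p,\ \tau(\sigma^{-1}(p))\}\), so that \(\sigma^{-1}(p) \notin \{\tau^{-1}(p), \tau^{-1}(q)\}\) and \(F_0 = \{p,q\}\) works. Pick \(\alpha \in V(F_0)\) (Lemma~\ref{lem:sign}). The evaluation map \(\mathrm{ev}_\alpha \colon \m{L}_s(L_{\psb}\R) \to L_{\psb}\R\), \(T \mapsto T\alpha\), is continuous by definition of the weak topology, it carries \(\{\rho^* : \rho \in \smth(S^1)\}\) into \(W_m \coloneqq \bigcup_{\abs{F}=m} V(F)\) with \(m = \abs{F_0} = 2\), and \(\sigma^*, \tau^*\) map into the distinct members \(V(\sigma^{-1}(F_0))\) and \(V(\tau^{-1}(F_0))\). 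Since each \(V(F)\) is clopen in the open set \(W_m\), any connected subset of \(\{\rho^* : \rho \in \smth(S^1)\}\) containing both \(\sigma^*\) and \(\tau^*\) would have connected image meeting two disjoint relatively clopen subsets of \(W_m\) — impossible. Hence no connected subset has two points, i.e.\ the set is totally disconnected. For part (4), with \(\m{A} = \{\rho_t : t \in S^1\}\) and \(F_0\) non-periodic of even size, \(\rho_t^{-1}(F_0) = F_0 - t\), so \(\rho_s^{-1}(F_0) = \rho_t^{-1}(F_0)\) iff \(F_0 = F_0 - (s-t)\) iff \(s = t\) by non-periodicity iff \(\rho_s = \rho_t\); this is exactly the hypothesis of Proposition~\ref{prop:disconnect}, and with part (2) it yields the discreteness of the rigid-rotation image claimed in Theorem~\ref{th:circle}.

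The one point requiring care is in part (3): a single \(V(F)\) is \emph{not} clopen in all of \(L_{\psb}\R\) — its complement contains the closed set of loops whose first derivative is not bounded away from \(0\) — so the total-disconnectedness argument has to be carried out inside the open subspace \(W_m\), where the \(\approx\)-classes really do form a clopen partition. Everything else is a routine application of the two preceding results together with translation-invariance of the locally convex topology and the definition of the weak topology on \(\m{L}(L_{\psb}\R)\).
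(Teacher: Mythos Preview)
Your proof is correct and follows essentially the same route as the paper: all four parts are derived from Lemma~\ref{lem:sign} and Proposition~\ref{prop:disconnect} via the pairwise disjoint open sets \(V(F)\) (or their preimages \(N(\{\alpha\},V(F))\) under evaluation). The only cosmetic differences are that in part~(2) you translate a \(0\)-neighbourhood instead of using \(N(\{\alpha\},V(F))\) directly, and in part~(3) you phrase the disconnection via clopenness of the \(V(F)\) inside \(W_m\) rather than by splitting the covering family into two disjoint open unions; both are equivalent to the paper's argument.
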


To say that a set \(F \subseteq S^1\) is periodic means that there is some \(t \in (0,1)\) such that \(F + t = F\).
For \(F\) finite this is equivalent to saying that \(F\) is the union of a finite number of cosets of
 \(C_n \coloneqq \{k/n : 0 \le k < n\}\),
for some \(n\).

\begin{proof}
 \begin{enumerate}
 \item
  By proposition~\ref{prop:disconnect} for \(\tau \in \m{A}\) the set \(\{\tau^* \alpha\}\) is open in
   \(\{\sigma^*\alpha : \sigma \in \smth(S^1)\}\).

 \item
  Let \(F_0\) and \(\alpha\) be as in the statement of proposition~\ref{prop:disconnect}.
  From the proof of proposition~\ref{prop:disconnect}, the sets
   \(\{N(\{\alpha\}, V(F)) : \abs{F} = \abs{F_0}\}\)
  are pairwise disjoint sets, open in \(\m{L}_s(L_{\psb}\R)\) which cover the set
   \(\{\sigma^* : \sigma \in \smth(S^1)\}\).
  Moreover, each \(N(\{\alpha\}, V(F))\) can contain at most one element of the set
   \(\{\sigma^* : \sigma \in \m{A}\}\).

 \item
  Let \(\sigma, \tau\) be distinct diffeomorphisms.
  Let
   \(\m{A} = \{\sigma, \tau\}\).
  As they are distinct, there is some \(t_0 \in S^1\) such that
   \(\sigma^{-1}(t_0) \ne \tau^{-1}(t_0)\).
  Choose another point \(t_1 \in S^1\) such that
   \(\sigma^{-1}(t_1) \ne \tau^{-1}(t_0)\).
  Let \(F_0 = \{t_0, t_1\}\).
  The pair \((\m{A}, F_0)\) satisfy the conditions of proposition~\ref{prop:disconnect}.
  Thus we have a family of pairwise disjoint open sets \(\m{V}\) in \(\m{L}_s(L_{\psb}\R)\) with the property that \(\sigma^*\) and \(\tau^*\) lie in two distinct ones.
  Divide the family \(\m{V}\) into two parts such that the set containing \(\sigma^*\) is in one part and that containing \(\tau^*\) in the other.
  Let \(U\) and \(V\) be the unions of the sets in these two parts.
  Then \(U\) and \(V\) are disjoint open sets in \(\m{L}_s(L_{\psb}\R)\) such that \(U \cup V\) contains
   \(\{\rho^* : \rho \in \smth(S^1)\}\)
  and \(\sigma^* \in U\), \(\tau^* \in V\).
  Hence
   \(\{\rho^* : \rho \in \smth(S^1)\}\)
  is totally disconnected.

 \item
  Let \(F_0\) be a non\hyp{}empty, non\hyp{}periodic subset of \(S^1\) of finite even size.
  Let \(s, t \in S^1\) be such that
   \(\rho_t(F_0) = \rho_s(F_0)\).
  Then
   \(F_0 = \rho_{s - t}(F_0) = F_0 + (s - t)\).
  Hence as \(F_0\) is non\hyp{}periodic, \(s = t\).
  \qedhere
 \end{enumerate}
\end{proof}

\begin{corollary}
 The action of the circle on \(L_{\psb}\R\) is not separately continuous.
 Let \(F \subseteq S^1\) be an even, non\hyp{}empty, non\hyp{}periodic subset.
 Let \(\alpha \in V(F)\).
 Then the set
  \(\{{\rho_t}^* \alpha : t \in S^1\}\)
 is discrete in \(L_{\psb}\R\).
 \noproof
\end{corollary}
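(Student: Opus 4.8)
The plan is to read this off from the preceding corollary together with a short connectedness argument. I would deal with the discreteness claim first. Part~(4) of the preceding corollary, applied with $F_0 = F$, says that the family $\m{A} = \{\rho_t : t \in S^1\}$ satisfies the hypotheses of proposition~\ref{prop:disconnect}: if $\rho_s(F) = \rho_t(F)$ then $F + (s-t) = F$, so non\hyp{}periodicity (and non\hyp{}emptiness) of $F$ force $s = t$. Feeding $\m{A}$, $F_0 = F$, and the given $\alpha \in V(F)$ into part~(1) of the preceding corollary then yields exactly that $\{{\rho_t}^*\alpha : t \in S^1\}$ is discrete in $L_{\psb}\R$. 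Unwound, the mechanism is that lemma~\ref{lem:sign} places ${\rho_t}^*\alpha$ in the $\approx$\enhyp{}class $V(\rho_t^{-1}(F)) = V(F - t)$, and the classes $V(G)$ with $\abs{G} = \abs{F}$ are pairwise disjoint open subsets of $L_{\psb}\R$, so each meets the orbit in at most one point; hence every orbit point is isolated.

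For the failure of separate continuity I would argue by contradiction. If the action $S^1 \times L_{\psb}\R \to L_{\psb}\R$ were separately continuous then, for a fixed loop $\alpha$, the orbit map $t \mapsto {\rho_t}^*\alpha$ would be continuous from $S^1$ into $L_{\psb}\R$, so it suffices to produce one pair $(F,\alpha)$ with $F$ finite, non\hyp{}empty, of even size and non\hyp{}periodic, and $\alpha \in V(F)$, for which this map is not continuous. Take $F = \{0,x\}$ with $x \in (0,1) \ssetminus \{1/2\}$; any period $t \in (0,1)$ of $F$ would have to satisfy $t = x$ and $2x = 0$ in $S^1$, which is impossible, so $F$ is non\hyp{}periodic, and by the indexing statement of lemma~\ref{lem:sign} some $\alpha \in V(F)$ exists. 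By the first part the orbit $\{{\rho_t}^*\alpha : t \in S^1\}$ is discrete, and it has more than one point since $t \mapsto V(F-t)$ is injective (again because $F$ is non\hyp{}periodic), so distinct values of $t$ produce orbit points lying in distinct disjoint classes. But a discrete space with more than one point is disconnected, hence is not a continuous image of the connected space $S^1$; this contradiction shows the circle action is not separately continuous.

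I do not expect a genuine obstacle, since the argument merely repackages results already established. The only step needing a little care is checking that the orbit really has more than one element, i.e.\ that the orbit map is non\hyp{}constant --- a priori rotating $\alpha$ could reproduce the same loop --- and this is precisely where non\hyp{}periodicity of $F$ is used, via ${\rho_t}^*\alpha \in V(F-t)$ and the mutual disjointness of the $V(G)$. A secondary point to state cleanly is what ``separately continuous'' means for a group action: it is separate continuity of the two\hyp{}variable action map in the sense of section~\ref{sec:diff}, of which we only need the instance where the loop variable is frozen and $t$ ranges over $S^1$.
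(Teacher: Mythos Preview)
Your proposal is correct and matches the paper's intent: the corollary is stated with \verb|\noproof| in the paper, meaning it is regarded as an immediate consequence of the preceding corollary (parts~(1) and~(4) for discreteness, plus the connectedness of \(S^1\) for the failure of separate continuity), which is exactly the argument you spell out. Your care in verifying that the orbit has more than one point is the only nontrivial wrinkle, and you handle it correctly via non\hyp{}periodicity.
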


In fact we can do better than this last statement.
It is not hard to show that the map
 \(t \mapsto {\rho_t}^* \alpha\)
has discrete image if, and only if, \(\alpha\) has a genuine break.

\subsection{Fixing the Circle Action}

In the last section we saw that the topology on \(L_{\psb}\R\) is particularly ill\hyp{}behaved with regard to the action of the diffeomorphism group, and in particular with regard to the natural circle action.
Due to the importance of this circle action it is tempting to try to fix this problem.
Unfortunately we shall see in this section that there is no truly satisfying solution.

Let \(\m{T}\) be a locally convex topology on \(L_{\psb}\R\) with the following properties:
\begin{enumerate}
\item
 The maps
  \(L_{F,b}\R \to L_{\psb}\R\)
 and \(L_{\psb}\R \to L^0\R\)
 are all continuous for the topology \(\m{T}\).

\item
 The topology is Hausdorff.

\item
 The circle action is separately continuous.
\end{enumerate}

To investigate this topology we consider the following piecewise\hyp{}linear loop.
Let
 \(\alpha_0 \colon S^1 \to \R\)
be the loop
\[
 \alpha_0(t) = \begin{cases} t - \frac14 &
  0 \le t < \frac12, \\
  \frac34 - t & \frac12 \le t < 1
 \end{cases}
\]
 illustrated in figure~\ref{fig:s1odd}.

\begin{figure}
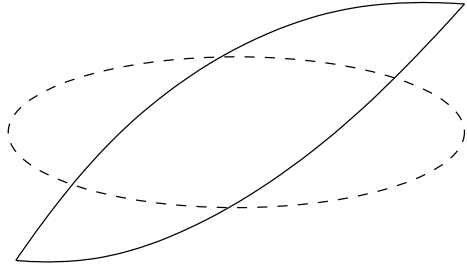

 \begin{center}
  \myfig{s1odd}
  \caption{The Loop \(\alpha_0\)\label{fig:s1odd}}
 \end{center}
\end{figure}

Note that
 \(\alpha_0(t) + \alpha_0(t + \frac12) = 0\)
for all \(t \in S^1\).
This is a useful property so we shall give it a name.

\begin{defn}
 We call a loop
  \(\beta \colon S^1 \to \R\)
 \emph{\(S^1\)\enhyp{}odd} if it satisfies the condition
  \(\beta(t) + \beta(t + \frac12) = 0\)
 for all \(t \in S^1\).
\end{defn}

The set of such loops has certain obvious properties.

\begin{lemma}
 \begin{enumerate}
 \item
  A loop is \(S^1\)\enhyp{}odd if it is a linear combination of \(S^1\)\enhyp{}odd loops.

 \item
  A loop is \(S^1\)\enhyp{}odd if it is the rotation of an \(S^1\)\enhyp{}odd loop.

 \item
  The only \(S^1\)\enhyp{}odd constant loop is the zero loop.

 \item
  If \(\beta\) is an integrable \(S^1\)\enhyp{}odd loop then there is a unique constant \(c\) (depending on \(\beta\)) for which the loop
   \(t \mapsto \int_0^t \beta(s) d s + c\)
  is \(S^1\)\enhyp{}odd.
  This constant is
   \(-\frac12 \int_0^{\frac12} \beta(s) d s\).
  \noproof
 \end{enumerate}
\end{lemma}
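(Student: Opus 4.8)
The first three claims are immediate from the defining identity, and the plan is to dispose of them in a line each. For (1), I would note that if \(\gamma = \sum_i \lambda_i \beta_i\) is a finite linear combination of \(S^1\)-odd loops, then \(\gamma(t) + \gamma(t + \frac12) = \sum_i \lambda_i\bigl(\beta_i(t) + \beta_i(t + \frac12)\bigr) = 0\). For (2), the point is that the condition ``\(\beta(t) + \beta(t + \frac12) = 0\) for all \(t\)'' is invariant under precomposition with a translation of \(S^1\): if \(\gamma(t) = \beta(t + a)\) then \(\gamma(t) + \gamma(t + \frac12) = \beta(t + a) + \beta\bigl((t + a) + \frac12\bigr) = 0\). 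For (3), a constant loop \(\beta \equiv k\) has \(\beta(t) + \beta(t + \frac12) = 2k\), which vanishes precisely when \(k = 0\).

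The content is in (4). The plan is to set \(G(t) = \int_0^t \beta(s)\,ds\) and compute \(G(t) + G(t + \frac12)\) directly. Splitting \(\int_0^{t + \frac12}\) as \(\int_0^{\frac12} + \int_{\frac12}^{t + \frac12}\) and substituting \(s = u + \frac12\) in the second piece turns it, via \(\beta(u + \frac12) = -\beta(u)\), into \(-\int_0^t \beta\); hence
\[
 G(t) + G\bigl(t + \tfrac12\bigr) = \int_0^{\frac12} \beta(s)\,ds
\]
is a \emph{constant} independent of \(t\). Consequently \(F(t) = G(t) + c\) satisfies \(F(t) + F(t + \frac12) = \int_0^{\frac12}\beta(s)\,ds + 2c\), which is identically zero exactly when \(c = -\frac12\int_0^{\frac12}\beta(s)\,ds\); uniqueness of \(c\) is immediate since the equation determining it is linear. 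Before this I would record the small point that \(F\) is genuinely a loop on \(S^1\): putting \(t = 0\) in the identity just derived (or, equivalently, splitting \(\int_0^1 \beta\) and substituting) gives \(\int_0^1 \beta = 0\), so \(G\), and hence \(F\), descends to a map on \(S^1\).

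There is no real obstacle here; the only thing requiring any care is the bookkeeping in (4) — keeping the substitution \(s \mapsto s + \frac12\) and its limits of integration straight, and checking that the candidate antiderivative really is periodic. Everything else follows at once from unwinding the definition of \(S^1\)-odd.
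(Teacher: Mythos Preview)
Your proof is correct in all four parts, and the computation in (4) is carried out cleanly. The paper itself does not give a proof of this lemma (it is marked with \noproof\ and left to the reader), so there is nothing to compare against; your argument is exactly the straightforward verification the author has in mind.
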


Let us return to \(\alpha_0\).
By assumption the circle action is separately continuous.
Therefore the map
 \(t \mapsto R_t \alpha_0\)
is a continuous map
 \(S^1 \to (L_{\psb}\R, \m{T})\).
The image is thus compact.
Let \(A\) be the convex circled hull of this set.
By \cite[II.4.3]{hs}, this has precompact closure and hence is bounded.
Let
 \(E_A \coloneqq \bigcup_{n \in \N} n A\).
This is a subspace of \(L_{\psb}\R\) and we equip it with the norm \(\norm_A\) defined by the gauge of \(A\).
That is:
\[
 \norm[x]_A \coloneqq \inf\{\lambda > 0 : x \in \lambda A\}.
\]
The pair \((E_A, \norm_A)\) is then a normed vector space which injects continuously into \((L_{\psb}\R, \m{T})\).
We transfer our attention to \((E_A, \norm_A)\).

\begin{lemma}
 The normed vector space \((E_A, \norm_A)\) is the space of \(S^1\)\enhyp{}odd piecewise\hyp{}linear loops.
 The set
  \(\{R_t \alpha_0 : t \in [0, \frac12)\}\)
 is a basis for \(E_A\).
 The map
  \((\xi_t) \to \sum \xi_t R_t \alpha_0\)
 is an isometric isomorphism
  \((\R^{([0, \frac12))}, \norm_1) \to (E_A, \norm_A)\).
\end{lemma}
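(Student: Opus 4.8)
The plan is to settle the vector-space content first and then reduce the metric claim to a single auxiliary seminorm. The starting observation is that \(\alpha_0\) is \(S^1\)-odd, so the rotation \(R_{1/2}\) negates every \(S^1\)-odd loop; since each \(R_t\alpha_0\) is \(S^1\)-odd (a rotation of an \(S^1\)-odd loop, by the preceding lemma), this gives \(R_{t+\frac12}\alpha_0=-R_t\alpha_0\). Hence the generating set \(S\coloneqq\{R_t\alpha_0:t\in S^1\}\) is symmetric and equals \(\{\pm R_t\alpha_0:t\in[0,\tfrac12)\}\). I would use symmetry to see that \(E_A=\bigcup_{n\in\N}nA\) is exactly \(\operatorname{span}S=\operatorname{span}\{R_t\alpha_0:t\in[0,\tfrac12)\}\): any finite combination lies in a scalar multiple of the convex hull of \(S\) (hence of \(A\)), because \(\sum_i c_is_i=\bigl(\sum_j\abs{c_j}\bigr)\sum_i\frac{\abs{c_i}}{\sum_j\abs{c_j}}\bigl(\sign(c_i)s_i\bigr)\) exhibits it as such using \(\sign(c_i)s_i\in S\subseteq A\), while conversely \(A\subseteq\operatorname{span}S\).

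Next I would identify \(\operatorname{span}\{R_t\alpha_0:t\in[0,\tfrac12)\}\) with the space of \(S^1\)-odd piecewise-linear loops. One inclusion is immediate: each \(R_t\alpha_0\) is \(S^1\)-odd and piecewise-linear, and both properties survive finite linear combinations — for \(S^1\)-oddness this is the first clause of the lemma above. For the reverse inclusion, let \(\beta\) be an \(S^1\)-odd piecewise-linear loop. Its derivative \(\beta'\) is a step function, and differentiating \(\beta(t)=-\beta(t+\tfrac12)\) shows the jump set of \(\beta'\) is a finite set \(\{s_1,\dots,s_k\}\subseteq[0,\tfrac12)\) together with \(\{s_i+\tfrac12\}\), the jump at \(s_i+\tfrac12\) being the negative of the jump \(j_i\) at \(s_i\). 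Since (after, if necessary, a harmless re-indexing of the rotation parameter) \((R_s\alpha_0)'\) jumps by \(+2\) at \(s\), by \(-2\) at \(s+\tfrac12\), and nowhere else, the loop \(g\coloneqq\sum_i\tfrac{j_i}{2}R_{s_i}\alpha_0\) has \(g'\) with the same jumps as \(\beta'\) at every point; thus \(\beta'-g'\) is a jumpless step function, hence constant, and being the derivative of a loop it is \(0\). So \(\beta-g\) is a constant \(S^1\)-odd loop, hence \(0\) by the lemma, and \(\beta=g\). The same argument shows that \(\{R_t\alpha_0:t\in[0,\tfrac12)\}\) spans \(E_A\). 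For linear independence I would apply \(\beta\mapsto\beta'\) to a relation \(\sum_i\xi_iR_{t_i}\alpha_0=0\) with the \(t_i\) distinct in \([0,\tfrac12)\): the breaks of \(R_{t_j}\alpha_0\) are \(t_j\) and \(t_j+\tfrac12\), none of which equals \(t_i\) for \(j\ne i\), so the jump of the left-hand side at \(t_i\) is exactly \(2\xi_i\), forcing \(\xi_i=0\). Hence \(\{R_t\alpha_0:t\in[0,\tfrac12)\}\) is a Hamel basis of \(E_A\) and \(\Phi\colon(\xi_t)\mapsto\sum_t\xi_tR_t\alpha_0\) is a linear bijection \(\R^{([0,\frac12))}\to E_A\).

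For the isometry I would prove \(\norm[\Phi((\xi_t))]_A=\sum_t\abs{\xi_t}\) by two inequalities. The bound ``\(\le\)'' is immediate: with \(C\coloneqq\sum_t\abs{\xi_t}>0\), \(\Phi((\xi_t))/C\) is a convex combination of the points \(\sign(\xi_t)R_t\alpha_0\in S\subseteq A\), so \(\Phi((\xi_t))\in CA\). For ``\(\ge\)'' I would introduce the auxiliary seminorm \(p(\beta)\coloneqq\tfrac14\operatorname{Var}(\beta')\), the total variation on \(S^1\) of the step function \(\beta'\); this is finite on \(E_A\) (elements of \(E_A\) are piecewise-linear) and sublinear because total variation is. Since \(p(\pm R_s\alpha_0)=\tfrac14(2+2)=1\) for all \(s\), sublinearity gives \(p\le1\) on the convex circled hull \(A\), hence \(p(\beta)\le\norm[\beta]_A\) for every \(\beta\in E_A\) (if \(\beta=\lambda a\) with \(a\in A\) then \(p(\beta)=\lambda p(a)\le\lambda\), and one takes the infimum). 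On the other hand, by the jump computation of the previous paragraph \(\Phi((\xi_t))'\) has jump \(2\xi_t\) at each \(t\in[0,\tfrac12)\), jump \(-2\xi_t\) at \(t+\tfrac12\), and no other jumps, with no cancellation, so \(p(\Phi((\xi_t)))=\tfrac14\sum_t\bigl(\abs{2\xi_t}+\abs{2\xi_t}\bigr)=\sum_t\abs{\xi_t}\). Combining the two inequalities yields \(\norm[\Phi((\xi_t))]_A=\sum_t\abs{\xi_t}=\norm[(\xi_t)]_1\); together with the previous paragraph this also says \((E_A,\norm_A)\) is precisely the normed space of \(S^1\)-odd piecewise-linear loops.

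I expect the only genuine obstacle to be the bookkeeping with jump sets: one must be careful that in the \([0,\tfrac12)\)-parametrisation distinct generators \(R_t\alpha_0\) never share a break point, so that the jumps of a linear combination are read off term by term with no cancellation — this underlies both the linear-independence argument and the identity \(p(\Phi((\xi_t)))=\sum_t\abs{\xi_t}\). The passage between ``convex circled hull'' and ``linear span'', and checking that \(p\) is well defined and finite on \(E_A\), each need a careful line but are otherwise routine.
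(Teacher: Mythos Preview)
Your proof is correct. The vector\hyp{}space half\emhyp{}spanning, linear independence, identification with \(S^1\)\hyp{}odd piecewise\hyp{}linear loops\emhyp{}is essentially the paper's argument: both pin down the coefficients by reading off derivative data, the paper by sampling \(\beta'\) at interior points between breaks and showing the resulting linear map \(D \colon V \to \R^k\) is an isomorphism, you by computing the jumps of \(\beta'\) directly. The isometry half is where you genuinely diverge. For the lower bound \(\norm[\beta]_A \ge \sum\abs{\xi_j}\) the paper argues straight from basis uniqueness: if some \(\lambda > 1\) had \(\lambda\beta \in A\), then writing \(\lambda\beta\) as a circled\hyp{}convex combination \(\sum\zeta_i R_{s_i}\alpha_0\) with \(\sum\abs{\zeta_i} \le 1\) and invoking uniqueness of coefficients in the basis forces \(\sum\abs{\zeta_i} = \lambda\sum\abs{\xi_j} > 1\), a contradiction. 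Your route via the auxiliary seminorm \(p(\beta) = \tfrac14\operatorname{Var}(\beta')\) packages the same jump calculation into a single sublinear functional majorised by the gauge; this is a little more conceptual and in fact anticipates exactly what the paper does in the proposition \emph{after} this lemma (on the differentiated space \(E_B\)) to identify the completion. One small correction on the bookkeeping you flagged: with the paper's convention \((R_s\alpha_0)(t) = \alpha_0(t+s)\), the breaks of \(R_{t_j}\alpha_0\) lie at \(-t_j\) and \(\tfrac12 - t_j\), not at \(t_j\) and \(t_j + \tfrac12\); as you anticipated this is harmless (re\hyp{}index via \(t \mapsto \tfrac12 - t\) or \(t \mapsto -t\)) and the disjointness of break sets for distinct parameters in \([0,\tfrac12)\), hence the no\hyp{}cancellation claim, goes through unchanged.
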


\begin{proof}
 Since \(R_t \alpha_0\) is \(S^1\)\enhyp{}odd and piecewise\hyp{}linear for any \(t \in S^1\), every element of \(E_A\) is \(S^1\)\enhyp{}odd and piecewise\hyp{}linear.
 To prove the converse, let \(\beta\) be an \(S^1\)\enhyp{}odd, piecewise\hyp{}linear loop.
 Let
  \(t_1, \dotsc, t_{k-1} \in (0,\frac12)\)
 with \(t_i < t_{i+1}\) be such that
  \(\{\frac12 - t_i : 1 \le i \le k\}\)
 is the set of breaks of \(\beta\) which lie in \((0, \frac12)\).
 Put \(t_0 = 0\) and \(t_k = \frac12\).
 Choose points
  \(s_i \in (\frac12 - t_i, \frac12 - t_{i-1})\)
 for \(1 \le i \le k\).
 Let \(V \subseteq E_A\) be the linear span of the set
  \(\{R_{t_i} \alpha_0 : 0 \le i \le k-1\}\).

 The breaks of \(\gamma \in V\) lie in the set
  \(\{\frac12 - t_i, 1 - t_i : 0 \le i \le k\}\)
 and so \(\gamma\) is differentiable at \(s_i\).
 Define a linear transformation \(D \colon V \to \R^k\) by
  \(\gamma \to (\gamma'(s_i))\).
 The image of \(R_{t_i} \alpha_0\) is the vector
  \((1, \dotsc, 1, -1, \dotsc, -1)\)
 where \(-1\) occurs with multiplicity \(i\).
 These vectors form a basis of \(\R^k\), whence \(\dim V = k\), the \(R_{t_i} \alpha_0\) are a basis for \(V\), and \(D\) is an isomorphism.

 In particular, there are \(\xi_i \in \R\), \(0 \le i \le k-1\), such that putting
  \(\gamma = \sum \xi_i R_{t_i} \alpha_0\)
 then
  \(D(\gamma) = (\beta'(s_1), \dotsc, \beta'(s_k))\).
 By construction, \(\gamma\) and \(\beta\) are both \(S^1\)\enhyp{}odd, piecewise\hyp{}linear loops with the same break points in \((0,\frac12)\) and the same derivatives on \([0, \frac12]\).
 Thus \(\beta - \gamma\) is an \(S^1\)\enhyp{}odd, piecewise\hyp{}linear loop with zero derivative on \([0, \frac12]\).
 Hence \(\beta - \gamma = 0\) whence \(\beta \in E_A\).

 This also proves the claim that
  \(\{R_t \alpha_0 : t \in [0, \frac12)\}\)
 is a basis for \(E_A\) since we cover every choice of
  \((t_1, \dotsc, t_{k-1})\)
 by this means.

 For \(\beta \in E_A\) with expansion
  \(\beta = \sum \xi_j R_{t_j} \alpha_0\)
 we have the obvious inequality
  \(\norm[\beta]_A \le \sum \abs{\xi_j}\).
 To show the converse, assume without loss of generality that \(\sum \abs{\xi_j} = 1\).
 If \(\norm[\beta]_A < 1\) then there is some \(\lambda > 1\) with \(\lambda \beta \in A\).
 Thus
  \(\lambda \beta = \sum \zeta_i R_{s_i} \alpha_0\)
 for some \(s_i \in [0, \frac12)\)
 and \(\zeta_i \in \R\) with
  \(\sum \abs{\zeta_i} \le 1\).
 As the \(R_t \alpha_0\) are a basis for \(E_A\) we must have that the two expressions for \(\beta\) are one and the same.
 Thus
  \(\sum \abs{\lambda \xi_j} = \sum \abs{\zeta_i} \le 1\).
 But
  \(\sum \abs{\lambda \xi_j} = \lambda \sum \abs{\xi_j} = \lambda > 1\).
 Thus we see that
  \(\norm[\beta]_A = 1 = \sum \abs{\xi_j}\).

 Moreover, this demonstrates that the map
  \((\xi_t) \to \sum \xi_t R_t \alpha_0\)
 is an isometric isomorphism
  \((\R^{([0, \frac12))}, \norm_1) \to (E_A, \norm_A)\).
\end{proof}

Our next task is to identify the completion of this space.
It will simplify matters if we differentiate everything involved and chose some convention for the points where our piecewise\hyp{}linear maps have breaks.
Let
 \(\beta_0 \colon S^1 \to \R\)
be the step function
\[
 \beta_0(t) = \begin{cases} 1 & 0 \le t < \frac12 \\
  -1 & \frac12 \le t < 1.
 \end{cases}
\]
Then \(\beta_0 = \alpha_0'\) at all but finitely many points.
Let \(E_B\) be the linear span of the family
 \(\{R_t \beta_0 : t \in [0, \frac12)\}\)
with norm
\[
 \norm[\sum_{j = 1}^k \xi_j R_{t_j} \beta_0]_B = \sum_{j=1}^k \abs{\xi_j}
\]
The uniqueness of \(S^1\)\enhyp{}odd integrals shows that \(S^1\)\enhyp{}odd integration defines an isometric isomorphism from \(E_B\) to \(E_A\).
To identify the completion of \(E_B\) we need to reformulate the norm in terms of values of the loop.

\begin{lemma}
 Let
  \(\gamma = \sum_{j=0}^k \xi_j R_{t_j} \beta_0\)
 and assume, without loss of generality, that \(t_j < t_{j+1}\) and \(t_0 = 0\).
 Let \(t \in [0, \frac12)\).
 Let \(i\) be the maximum of the set
  \(\{j : \frac12 - t_j > t\}\).
 Then
  \(\gamma(t) = \xi_0 + \dotsb + \xi_i - \xi_{i-1} - \dotsb \xi_k\).
\end{lemma}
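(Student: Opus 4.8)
The plan is a direct evaluation of the step function $\gamma$ at the point $t$. First I would recall the rotation convention: $R_s$ is the circle action by precomposition with rotation by $s$, so $(R_s f)(x) = f(x+s)$; in particular $(R_{t_j}\beta_0)(t) = \beta_0(t+t_j)$. The key elementary observation is that, since $t \in [0,\tfrac12)$ and every $t_j \in [0,\tfrac12)$, the argument $t + t_j$ lies in $[0,1)$, so no modular wrap-around occurs and we may read off the value of $\beta_0$ directly from its definition: $(R_{t_j}\beta_0)(t) = 1$ when $t + t_j < \tfrac12$, and $(R_{t_j}\beta_0)(t) = -1$ when $\tfrac12 \le t + t_j < 1$.

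Next I would rewrite $t + t_j < \tfrac12$ as $t < \tfrac12 - t_j$ and exploit monotonicity. Because $(t_j)$ is strictly increasing, the sequence $(\tfrac12 - t_j)_{j=0}^{k}$ is strictly decreasing, so the set $\{j : \tfrac12 - t_j > t\}$ is an initial segment $\{0,1,\dots,i\}$ of the index set (it is non-empty, since $j = 0$ always belongs to it, as $\tfrac12 - t_0 = \tfrac12 > t$), and $i$ is precisely its maximum, as in the statement. Consequently $(R_{t_j}\beta_0)(t) = +1$ for $0 \le j \le i$ and $(R_{t_j}\beta_0)(t) = -1$ for $i < j \le k$. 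Summing over $j$ gives
\[
 \gamma(t) = \sum_{j=0}^{k} \xi_j\,(R_{t_j}\beta_0)(t) = \sum_{j=0}^{i} \xi_j - \sum_{j=i+1}^{k} \xi_j = \xi_0 + \dots + \xi_i - \xi_{i+1} - \dots - \xi_k,
\]
which is the asserted formula (the $+$ sign attached to the first $i+1$ coefficients and $-$ to the rest).

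There is no real obstacle here; the argument is a one-line computation once the conventions are pinned down. The only points that warrant a moment's care — and which I would spell out — are that the half-open conventions in the definition of $\beta_0$ match the strict inequality defining $i$ at the boundary case $t + t_j = \tfrac12$ (there $\tfrac12 - t_j = t \not> t$, so $j > i$, and indeed $\beta_0(\tfrac12) = -1$, consistent with the $-1$ value), so that $\gamma(t)$ is unambiguous even when $t$ happens to be a break of $\gamma$; and that confining both $t$ and all the $t_j$ to $[0,\tfrac12)$ is exactly what prevents $t + t_j$ from leaving $[0,1)$ and thereby keeps the evaluation of $\beta_0$ free of wrap-around.
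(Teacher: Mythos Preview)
Your proof is correct and follows essentially the same approach as the paper: both evaluate \((R_{t_j}\beta_0)(t)=\beta_0(t+t_j)\) directly, translate the condition \(t+t_j<\tfrac12\) into \(t<\tfrac12-t_j\), observe that \(i\) is well-defined since \(j=0\) always satisfies this, and then sum. Your additional remarks about the half-open boundary case and the absence of wrap-around are more explicit than the paper's version, and you also correctly write the final expression with \(\xi_{i+1}\) rather than the paper's evident typo \(\xi_{i-1}\).
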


\begin{proof}
 For \(s \in [0, \frac12)\) we have by definition,
  \((R_s \beta_0)(t) = \beta_0(t + s)\)
 which is \(1\) if
  \(t + s \in [0, \frac12)\)
 and is \(-1\) if
  \(t + s \in [\frac12, 1)\).
 Therefore
  \((R_s \beta_0)(t) = 1\)
 if \(t < \frac12 - s\) and
  \((R_s \beta_0)(t) = -1\)
 if \(t \ge \frac12 - s\).

 Let \(i\) be as in the statement.
 Note that \(i\) is well\hyp{}defined as
  \(\frac12 - t_0 = \frac12 > t\).
 Then for \(j \le i\), \(t < \frac12 - t_j\) so
  \((R_{t_j} \beta_0)(t) = 1\);
 whilst for \(j > i\), \(t \ge \frac12 - t_j\) so
  \((R_{t_j}\beta_0)(t) = -1\).
 Substituting in yields the given expression.
\end{proof}

\begin{corollary}
 \label{cor:xival}
 Let
  \(\gamma = \sum_{j=0}^k \xi_j R_{t_j} \beta_0\)
 with, as above, \(t_j < t_{j+1}\) and \(t_0 = 0\).
 Then
 \[
  \xi_j = \begin{cases}
   \frac12 \gamma(\frac12 - t_{j+1}) - \frac12 \gamma(\frac12 - t_j) &
   j \ne k \\
   \frac12 \gamma(0) - \frac12 \gamma(\frac12 - t_k) &
   j = k
  \end{cases}
 \]
\end{corollary}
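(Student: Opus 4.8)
The plan is to obtain each coefficient $\xi_j$ by differencing the formula of the preceding lemma at two consecutive break points of $\gamma$. Throughout I write $s_j \coloneqq \frac12 - t_j$ and adopt the convention $t_{k+1} \coloneqq \frac12$, so that $s_{k+1} = 0$; thus $0 = s_{k+1} < s_k < \dotsb < s_1 < s_0 = \frac12$, and $s_1, \dotsc, s_k$ are exactly the breaks of $\gamma$ lying in $(0,\frac12)$.

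First I would record what the preceding lemma delivers at the points $s_j$ for $1 \le j \le k$ and at $s_{j+1}$ for $0 \le j \le k-1$; all of these lie in $[0,\frac12)$. Since the $t_{j'}$ are strictly increasing, for $t = s_j$ the index called $i$ in that lemma equals $\max\{j' : t_{j'} < t_j\} = j-1$, and for $t = s_{j+1}$ it equals $j$, so the lemma gives
\[
 \gamma(s_j) = \xi_0 + \dotsb + \xi_{j-1} - \xi_j - \dotsb - \xi_k, \qquad
 \gamma(s_{j+1}) = \xi_0 + \dotsb + \xi_j - \xi_{j+1} - \dotsb - \xi_k .
\]
I would also note, by evaluating $\gamma = \sum_j \xi_j R_{t_j}\beta_0$ directly and using $\beta_0(t_j) = 1$ for $t_j \in [0,\frac12)$, that $\gamma(0) = \sum_j \xi_j$ — which is the $j = k$ instance of the second display under the convention $s_{k+1} = 0$ — and hence $\gamma(\frac12) = -\sum_j \xi_j$ by $S^1$-oddness of $\gamma$.

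Subtracting the two displays gives $\gamma(s_{j+1}) - \gamma(s_j) = 2\xi_j$ for $1 \le j \le k-1$, which is the asserted formula after division by $2$. The endpoint cases $j = k$ and $j = 0$ are treated separately, because there the differencing reaches just outside $[0,\frac12)$: for $j = k$ one uses $\gamma(0) = \sum_j \xi_j$ in place of the missing term and finds $\gamma(0) - \gamma(s_k) = 2\xi_k$; for $j = 0$ one uses $\gamma(\frac12) = -\sum_j \xi_j$ and finds $\gamma(s_1) - \gamma(\frac12) = 2\xi_0$. Substituting back $s_j = \frac12 - t_j$ — so that $\frac12 - t_0$ is read as the point $\frac12 \in S^1$ and $\frac12 - t_{k+1}$ as $0$ — gives precisely the two cases in the statement; the degenerate case $k = 0$, in which $j = 0$ and $j = k$ coincide, is covered by either computation.

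The one point needing care is exactly this endpoint bookkeeping, together with the fact that $\gamma$, being assembled from the step function $\beta_0$, is only right-continuous, so "the value of $\gamma$ at a break" must mean its value immediately to the right. This causes no extra trouble, however: the preceding lemma's prescription $i = \max\{j' : \frac12 - t_{j'} > t\}$ already selects precisely that value, so the whole argument reduces to the index arithmetic above.
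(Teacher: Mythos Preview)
Your argument is correct and is essentially the paper's own proof: both evaluate the preceding lemma at the consecutive points \(\tfrac12 - t_j\), difference to isolate \(2\xi_j\), and handle the two endpoint cases using the \(S^1\)-oddness relation \(\gamma(\tfrac12) = -\gamma(0)\). The only difference is cosmetic---the paper indexes the evaluation points as \(s_j = \tfrac12 - t_{j+1}\) (with \(s_k = 0\)) rather than your \(s_j = \tfrac12 - t_j\)---and your extra remarks on right-continuity and the degenerate case \(k=0\) are welcome but not strictly needed.
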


\begin{proof}
 For
  \(j \in \{0, \dotsc, k-1\}\)
 let
  \(s_j = \frac12 - t_{j+1}\).
 Let \(s_k = 0\).
 By construction,
  \(\max\{i : \frac12 - t_i > s_j\} = j\).
 Thus
 \[
  \gamma(s_j) = \xi_0 + \dotsb + \xi_j - \xi_{j-1} - \dotsb - \xi_k.
 \]
 Hence for \(j \ge 1\),
  \(\gamma(s_j) - \gamma(s_{j-1}) = 2 \xi_j\)
 and
  \(\gamma(s_0) + \gamma(s_k) = 2 \xi_0\).
 Now \(s_k = 0\) so as \(\gamma\) is \(S^1\)\enhyp{}odd,
  \(\gamma(s_k) = - \gamma(\frac12)\).
 Thus for \(0 \le j < k\),
  \(2 \xi_j = \gamma(\frac12 - t_{j+1}) - \gamma(\frac12 - t_j)\)
 and
  \(2 \xi_k = \gamma(0) - \gamma(\frac12 - t_k)\).
\end{proof}

\begin{proposition}
 The loops in \(E_B\) are of bounded variation and the total variation of \(\gamma \in E_B\) is \(4\norm[\gamma]_B\).
\end{proposition}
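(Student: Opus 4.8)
The plan is to read off the total variation of a general element of \(E_B\) directly from its description as a step function on \(S^1\).

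First I would settle the bounded\enhyp{}variation claim and the upper bound together. The loop \(R_t\beta_0\) is piecewise constant on \(S^1\), taking the values \(\pm1\), with exactly two discontinuities — at \(\frac12 - t\) and at \(1 - t\) — each a jump of size \(2\); hence its total variation is \(4\). Total variation is a seminorm on the space of loops of bounded variation, and every \(\gamma \in E_B\) is by construction a \emph{finite} linear combination \(\sum_j \xi_j R_{t_j}\beta_0\); therefore \(\gamma\) is itself a step function, is of bounded variation, and \(\operatorname{Var}_{S^1}(\gamma) \le \sum_j 4\abs{\xi_j} = 4\norm[\gamma]_B\).

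For the reverse inequality I would pass to the reduced expansion \(\gamma = \sum_{j=1}^k \xi_j R_{t_j}\beta_0\) with the \(t_j \in [0,\frac12)\) pairwise distinct and all \(\xi_j \ne 0\), so that \(\norm[\gamma]_B = \sum_{j=1}^k \abs{\xi_j}\) by the isometry established for \(E_B\). Each summand \(R_{t_j}\beta_0\) has its two size\enhyp{}\(2\) jumps at the points \(\frac12 - t_j\) and \(1 - t_j\) of \(S^1 = \R/\Z\), the former lying in \((0,\frac12]\) and the latter in \(\{0\}\cup(\frac12,1)\). I would then observe that all \(2k\) of these points are pairwise distinct: within each of the two families distinctness follows from the distinctness of the \(t_j\), while the two families lie in the disjoint subsets \((0,\frac12]\) and \(\{0\}\cup(\frac12,1)\) of \(S^1\). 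Consequently at each discontinuity of \(\gamma\) exactly one summand is responsible, the jump of \(\gamma\) there has size exactly \(2\abs{\xi_j}\), and no cancellation can occur. Since the total variation of a step function on the circle equals the sum of the sizes of its jumps (counting the wrap\enhyp{}around discontinuity at \(0 \equiv 1\)), this gives \(\operatorname{Var}_{S^1}(\gamma) = \sum_{j=1}^k 2\cdot 2\abs{\xi_j} = 4\sum_{j=1}^k\abs{\xi_j} = 4\norm[\gamma]_B\), which together with the first paragraph yields the asserted equality. One could equivalently extract the jump sizes from the value formula of corollary~\ref{cor:xival} and use the \(S^1\)\enhyp{}oddness relation \(\gamma(t+\frac12) = -\gamma(t)\) to reduce the bookkeeping to a single half\enhyp{}period.

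I do not expect a genuine obstacle: the whole content is the elementary observation that the \(2k\) discontinuities of \(\gamma\) sit at distinct points, so their sizes add rather than interfere. The only steps calling for a little care are this distinctness check and the (standard) fact that the total variation of a closed step function is the sum of its jump magnitudes, the wrap\enhyp{}around point included.
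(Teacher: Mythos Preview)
Your argument is correct. The key observation\emhyp{}that the \(2k\) jump points \(\frac12 - t_j \in (0,\frac12]\) and \(1 - t_j \in \{0\}\cup(\frac12,1)\) are pairwise distinct, so that no cancellation occurs between summands\emhyp{}is sound, and the total variation of a step loop is indeed the sum of its jump magnitudes.

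Your route differs mildly from the paper's. The paper does not split into an upper bound via subadditivity plus a non\hyp{}cancellation argument; instead it takes a partition \(\m{P}\) containing all the break points (and closed under \(t \mapsto t + \tfrac12\)), writes out the variation over \(\m{P}\) as a sum of consecutive differences of values of \(\gamma\), uses \(S^1\)\enhyp{}oddness to halve the bookkeeping, and then invokes corollary~\ref{cor:xival} to convert each difference back into \(2\xi_j\). Your approach is more geometric: you locate each jump, attribute it to a single summand, and read off its size as \(2\abs{\xi_j}\) directly. This saves the appeal to the value formula in corollary~\ref{cor:xival} at the price of the distinctness check\emhyp{}a fair trade, and arguably cleaner. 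Either way the substance is the same: the breaks of the \(R_{t_j}\beta_0\) do not collide, so the variation is exactly additive.
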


\begin{proof}
 Let
  \(\gamma = \sum_{j=0}^k \xi_j R_{t_j} \beta_0\)
 be an element of \(E_B\) with \(t_j < t_{j+1}\) and \(t_0 = 0\) as before.
 Let \(\m{P}\) be a partition of \(S^1\) and assume without loss of generality that
  \(\frac12 - t_j \in \m{P}\)
 for all \(j\) and
  \(\m{P} + \frac12 = \m{P}\).
 As \(\gamma\) is constant on the intervals \([t_j, t_{j+1})\) and on \([t_k, \frac12)\) it is easy to see that the variation of \(\gamma\) with respect to the partition \(\m{P}\) is:
 \begin{gather*}
  \abs{\gamma(0) - \gamma(\frac12 - t_k)} + \sum_{j=0}^{k-1} \abs{\gamma(\frac12 - t_{j+1}) - \gamma(\frac12 - t_j)} \\
  + \abs{\gamma(\frac12) - \gamma(1 - t_k)} + \sum_{j=0}^{k-1} \abs{\gamma(1 - t_{j+1}) - \gamma(1 - t_j)}.
 \end{gather*}
 As
  \(\gamma(t) = - \gamma(t + \frac12)\)
 we can shorten this expression to twice the first half.
 Then we substitute in from corollary~\ref{cor:xival} to find that the total variation with respect to \(\m{P}\) is:
 \[
  4\abs{\xi_k} + 4 \sum_{j=0}^{k-1} \abs{\xi_j} = 4 \sum_{j=0}^k \abs{\xi_j} = 4 \norm[\gamma]_B.
 \]
 As this is independent of \(\m{P}\) we see that \(\gamma\) is of bounded variation with total variation \(4 \norm[\gamma]_B\).
\end{proof}

\begin{corollary}
 The completion of \(E_B\) is the space of \(S^1\)\enhyp{}odd loops of bounded variation, whence the completion of \(E_A\) is the space of differentiable \(S^1\)\enhyp{}odd loops with derivative of bounded variation.
 \noproof
\end{corollary}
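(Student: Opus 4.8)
The plan is to recognise $(E_B,\norm[\cdot]_B)$ as (a rescaling of) a dense subspace of a concrete Banach space of loops, and then to carry the conclusion over to $E_A$ by $S^1$-odd integration.

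By the preceding proposition every loop in $E_B$ has bounded variation and $\norm[\gamma]_B$ is exactly $\tfrac14$ of the total variation of $\gamma$. Let $\mathrm{BV}_0$ denote the space of $S^1$-odd loops of bounded variation, normed by the total variation. This genuinely is a normed space, not merely a semi-normed one: by item~(3) of the lemma on $S^1$-odd loops the only $S^1$-odd constant loop is the zero loop, so the total variation separates points; in fact, rewriting $\gamma(t)=\tfrac12\bigl(\gamma(t)-\gamma(t+\tfrac12)\bigr)$ shows that on $\mathrm{BV}_0$ the total variation dominates the supremum norm. The first thing to establish is that $\mathrm{BV}_0$ is complete: $\mathrm{BV}(S^1)$ is a Banach space in its usual norm, the $S^1$-odd loops form a closed subspace, and on that subspace the total variation is an equivalent norm by the domination just noted. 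Granting that, it suffices to show $E_B$ is dense in $\mathrm{BV}_0$, for then $\mathrm{BV}_0$ is the completion of $(E_B,\norm[\cdot]_B)$.

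Density is the only real content of the corollary. Given $\gamma\in\mathrm{BV}_0$ and $\epsilon>0$ I would take a sufficiently fine symmetric partition $\m{P}$ of $S^1$ and form the $S^1$-odd step function $\gamma_{\m{P}}$ agreeing with $\gamma$ at the points of $\m{P}$; corollary~\ref{cor:xival} exhibits $\gamma_{\m{P}}$ as an honest element $\sum_j \xi_j R_{t_j}\beta_0$ of $E_B$, with coefficients read off from the values of $\gamma$. One then bounds the total variation of $\gamma-\gamma_{\m{P}}$ over refinements of $\m{P}$, using the $S^1$-odd symmetry to reduce the bookkeeping to the half-circle $[0,\tfrac12]$ exactly as in the proof of the total-variation formula, and lets the mesh of $\m{P}$ tend to zero. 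Making this estimate in the total-variation norm --- rather than merely pointwise or in $L^1$, where step functions are obviously dense --- is where the real care is needed and is the step I expect to be the main obstacle.

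The passage to $E_A$ is then routine. As already observed, $S^1$-odd integration is an isometric isomorphism $E_B\to E_A$, so it extends uniquely to an isometric isomorphism of the completions. Under this extension the completion of $E_A$ is exactly the set of $S^1$-odd loops obtained by integrating an element of $\mathrm{BV}_0$, that is, the space of differentiable $S^1$-odd loops with derivative of bounded variation, which is the claim.
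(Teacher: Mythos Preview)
Your overall strategy is sound---identify the norm, show the target space is complete, prove density---but the density step is not merely ``where the real care is needed'': it is false.  Step functions are \emph{not} dense in the space of bounded-variation functions with the total-variation norm.  For a concrete obstruction inside the \(S^1\)-odd world, take \(\gamma = \beta_0\) itself composed with a smooth monotone reparametrisation, or more simply consider any \(S^1\)-odd loop whose restriction to \([0,\tfrac12)\) is continuous and strictly monotone on some subinterval.  If \(s\) is any step function then on each interval of constancy of \(s\) the difference \(\gamma - s\) still has the full variation of \(\gamma\) there; summing over the pieces gives total variation of \(\gamma - s\) at least equal to the variation of the continuous part of \(\gamma\), independently of the step approximation chosen.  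Thus \(\gamma\) cannot be approximated in total variation by elements of \(E_B\).

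This is consistent with the earlier lemma you cite: the isometry \((\R^{([0,\frac12))},\norm_1)\to E_B\) already tells you that the completion of \(E_B\) is \(\ell^1([0,\tfrac12))\), i.e.\ the \(S^1\)-odd loops whose derivative measure is purely atomic (countably many jumps with absolutely summable sizes), not all \(S^1\)-odd loops of bounded variation.  The paper offers no proof of this corollary, and on the face of it the statement as written is too strong; but either way, the approximation scheme \(\gamma\mapsto\gamma_{\m P}\) you propose cannot close the gap, and your proof does not go through.
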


Had we assumed that the map
 \(S^1 \to (L_{\psb}\R, \m{T})\),
 \(t \mapsto R_t \alpha_0\)
was Lipschitz we would have obtained the space \(L^{1,1}(S^1, \R)\) of differentiable \(S^1\)\enhyp{}odd loops with Lebesgue integrable derivative.

\begin{proposition}
 Let \(\m{T}\) be a locally convex topological vector space topology on \(L_{\psb}\R\) satisfying the following conditions:
 \begin{enumerate}
 \item
  The maps
   \(L_{F,b}\R \to L_{\psb}\R\)
  and \(L_{\psb}\R \to L^0\R\)
  are continuous with respect to \(\m{T}\).

 \item
  The completion of \((L_{\psb}\R, \m{T})\) injects into \(L^0\R\).

 \item
  The circle action is separately continuous.
 \end{enumerate}
 Then the restriction of \(\m{T}\) to the subspace of \(S^1\)\enhyp{}odd loops is at least as coarse as the topology given by the norm
  \(\int_{S^1} \abs{\gamma^{(2)}(s)} d s + \sum_{t \in S^1} \abs{\gamma^{(2)}_+(t) - \gamma^{(2)}_-(t)}\).
\end{proposition}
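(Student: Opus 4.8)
The plan is to reduce the statement to the single loop $\alpha_0$ already in play: I would write an arbitrary $S^1$-odd loop as a vector-valued integral over the orbit of $\alpha_0$ and then push a $\m{T}$-continuous seminorm inside that integral. First I would record that the norm in the statement is a constant multiple of the total variation $\operatorname{TV}(\gamma')$ of the derivative: for a piecewise-smooth $\gamma$ the function $\gamma'$ is smooth on each component of the complement of the breaks and has a finite jump at each break, so $\int_{S^1}\abs{\gamma^{(2)}(s)}\,ds+\sum_{t\in S^1}\abs{\gamma'_+(t)-\gamma'_-(t)}$ is exactly the total mass $\norm[D\gamma']$ of the distributional derivative of $\gamma'$. (I read the jump term as the jump variation of $\gamma'$; taking it to be that of $\gamma^{(2)}$ would make the whole expression vanish on nonzero piecewise-linear loops such as $\alpha_0$, which is incompatible with condition~(1) forcing $\m{T}$ to be finer than the $C^0$-topology.) Write $\hat E$ for the completion of $(L_{\psb}\R,\m{T})$; by condition~(2) it embeds continuously in $L^0\R$, so the point-evaluations $\gamma\mapsto\gamma(s)$ are continuous on $\hat E$ and separate its points. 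By condition~(3) the orbit map $t\mapsto R_t\alpha_0$ is continuous from $S^1$ into $(L_{\psb}\R,\m{T})$, hence into $\hat E$, so the orbit $O\coloneqq\{R_t\alpha_0:t\in S^1\}$ is compact.

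The main step is the representation. Let $\gamma\in L_{\psb}\R$ be $S^1$-odd. Then $\gamma'$ is of bounded variation, so $\nu_\gamma\coloneqq D^2\gamma=D\gamma'$ is a finite signed measure on $S^1$, itself $S^1$-odd in the sense $R_{1/2}\nu_\gamma=-\nu_\gamma$, with $\norm[\nu_\gamma]=\operatorname{TV}(\gamma')$. Put $\mu_\gamma\coloneqq\tfrac14\nu_\gamma$ and form $I\coloneqq\int_{S^1}R_t\alpha_0\,d\mu_\gamma(t)$, which exists in $\hat E$ because the integrand is a continuous map from the compact $S^1$ into the complete space $\hat E$. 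Since $\alpha_0$ is the $S^1$-odd piecewise-linear loop with $D\alpha_0=\beta_0$ and $D\beta_0=2\delta_0-2\delta_{1/2}$, applying the continuous linear operator $D^2$ under the integral gives
\[
 D^2 I=\int_{S^1}\bigl(2\delta_t-2\delta_{t+1/2}\bigr)\,d\mu_\gamma(t)=2\mu_\gamma-2R_{1/2}\mu_\gamma=4\mu_\gamma=\nu_\gamma=D^2\gamma,
\]
the third equality using $R_{1/2}\mu_\gamma=-\mu_\gamma$. Both $I$ and $\gamma$ are $S^1$-odd loops in $\hat E\subseteq L^0\R$ with the same second distributional derivative; since a loop with $D^2=0$ is affine, hence constant on the circle, hence (being $S^1$-odd) zero, it follows that $I=\gamma$. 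Equivalently one checks $\gamma(s)=\int_{S^1}\alpha_0(s-t)\,d\mu_\gamma(t)$ by evaluating and using that the point-evaluations are continuous and separating on $\hat E$.

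The conclusion is then immediate. Let $p$ be any continuous seminorm for $\m{T}$; it extends continuously to $\hat E$, and since $O$ is compact and $p$ is $\m{T}$-continuous, $M_p\coloneqq\sup_{t}p(R_t\alpha_0)<\infty$. Passing $p$ through the integral,
\[
 p(\gamma)=p\Bigl(\int_{S^1}R_t\alpha_0\,d\mu_\gamma(t)\Bigr)\le\int_{S^1}p(R_t\alpha_0)\,d\abs{\mu_\gamma}(t)\le M_p\norm[\mu_\gamma]=\tfrac14 M_p\,\operatorname{TV}(\gamma').
\]
Thus every generating seminorm of $\m{T}$ is dominated, on the subspace of $S^1$-odd loops, by a fixed multiple of the norm in the statement, which is precisely the assertion that $\m{T}$ restricted to that subspace is at least as coarse as the norm topology. (Alternatively one could avoid the integral altogether: the continuous inclusion $E_A\hookrightarrow(L_{\psb}\R,\m{T})$ extends to a continuous injection of the completion of $(E_A,\norm_A)$ — identified above as the space of differentiable $S^1$-odd loops with derivative of bounded variation, carrying the norm $\tfrac14\operatorname{TV}(\gamma')$ — into $\hat E$; restricting this to the subspace of $S^1$-odd loops of $L_{\psb}\R$ and using that $(L_{\psb}\R,\m{T})\hookrightarrow\hat E$ is a topological embedding yields the same conclusion.)

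The step I expect to be the real obstacle is the representation $\gamma=\int_{S^1}R_t\alpha_0\,d\mu_\gamma$: one must keep the distributional computation honest against the $S^1$-odd constraint, verify that the vector-valued integral genuinely lands in $\hat E$ and is evaluated by the point-functionals — this is exactly where condition~(2) earns its keep, to keep everything among honest loops — and bear in mind that $\gamma$ need not be $C^1$, so ``derivative of bounded variation'' must be read distributionally throughout (and likewise in the identification of the completion of $E_A$, if one follows the alternative route). The supporting facts — the identity $D\beta_0=2\delta_0-2\delta_{1/2}$, boundedness of a $\m{T}$-continuous seminorm on the compact orbit, and the passage of a seminorm through a vector-valued integral — are routine.
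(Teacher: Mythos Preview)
Your proposal is correct, and the parenthetical alternative you sketch at the end is precisely the route the paper takes: the paper observes that the completion of \((E_A,\norm_A)\) already contains every \(S^1\)-odd piecewise-smooth loop, uses condition~(2) to identify that completion inside the \(\m{T}\)-completion, concludes that \(E_A\) is \(\m{T}\)-dense in the \(S^1\)-odd loops, and reads off the norm from the earlier identification \(\norm_A=\tfrac14\operatorname{TV}(\gamma')\). Your main argument is different: rather than passing through density in the completion, you exhibit each \(S^1\)-odd loop \emph{explicitly} as a vector-valued integral \(\gamma=\int_{S^1}R_t\alpha_0\,d\mu_\gamma(t)\) over the compact orbit and then push an arbitrary \(\m{T}\)-seminorm through. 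This is more constructive and yields the bounding constant \(M_p=\sup_t p(R_t\alpha_0)\) directly, at the cost of a short distributional computation (the identity \(D\beta_0=2\delta_0-2\delta_{1/2}\) and the \(S^1\)-odd rigidity that kills the kernel of \(D^2\)); the paper's route is shorter because all the work has been packed into the preceding lemmas on \(E_A\) and \(E_B\). Your reading of the jump term as the jump variation of \(\gamma'\) rather than of \(\gamma^{(2)}\) is the intended one: the paper's own proof describes the norm as the total variation of the first derivative, and indeed the literal formula with \(\gamma^{(2)}_\pm\) would vanish on \(\alpha_0\).
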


\begin{proof}
 The image in \(L^0\R\) of the completion of \(E_A\) contains the subspace of \(S^1\)\enhyp{}odd piecewise\hyp{}smooth loops hence by the injectivity assumption the completion of \(E_A\) in the completion of \((L_{\psb}\R, \m{T})\) must contain the subspace of \(S^1\)\enhyp{}odd piecewise\hyp{}smooth loops.
 Thus \(E_A\) is dense in the subspace of \(S^1\)\enhyp{}odd piecewise\hyp{}smooth loops and so the topology on this subspace is completely determined by its restriction to \(E_A\).
 This topology is normable with norm given by the total variation of the first derivative.
 For a smooth loop this is
  \(\int_{S^1} \abs{\gamma^{(2)}(s)} d s\)
 whilst for a piecewise\hyp{}smooth loop we merely need to add in the absolute values of the breaks in \(\gamma^{(2)}\).
\end{proof}

\end{document}